\newcommand{\ds}{\displaystyle}
\newtheorem{Th}{Theorem}[section]
\newtheorem{Lem}[Th]{Lemma}
\newtheorem{Co}[Th]{Corollary}
\newtheorem{Def}[Th]{Definition}
\newtheorem{Prop}[Th]{Proposition}
\newtheorem{remark}[Th]{Remark}
\def\CA{{\mathcal A}}
\def\CB{{\mathcal B}}
\def\CD{{\mathcal D}}
\def\CE{{\mathcal E}}
\def\CI{{\mathcal I}}
\def\CO{{\mathcal O}}
\def\CR{{\mathcal R}}
\def\CS{{\mathcal S}}
\def\CT{{\mathcal T}}
\def\S{{\mathbb S}}
\def\N{{\mathbb N}} 
\def\R{{\mathbb R}}
\DeclareMathOperator{\supp}{supp}
\DeclareMathOperator{\graph}{graph}
\DeclareMathOperator{\diag}{diag}
\def\Op{\mathop{\rm Op}\nolimits}
\def\<{\langle}
\def\>{\rangle}
\def\bra{\langle}
\def\ket{\rangle}
\newcommand{\Subsection}[1]{\subsection{ #1} ${}^{}$}
\author{Ivana Alexandrova}
\address{Ivana Alexandrova, Department of Mathematics, East 
Carolina University, Greenville, NC 27858, USA}
\email{alexandrovai@ecu.edu}
\author{Jean-Fran\c{c}ois Bony}
\address{Jean-Fran\c{c}ois Bony, Institut de Math\'ematiques de Bordeaux, (UMR CNRS 5251), Universit\'e de Bordeaux I, 33405 Talence, France}
\email{bony@math.u-bordeaux1.fr}
\author{Thierry Ramond}
\address{Thierry Ramond, Math\'ematiques, Universit\'e Paris Sud, (UMR CNRS 8628), 91405 Orsay, France}
\email{thierry.ramond@math.u-psud.fr}
\title{Resolvent and Scattering Matrix at the Maximum of the Potential}
\keywords{Scattering matrix, resolvent, spectral function, Schr\"{o}dinger equation, Fourier integral operator, critical energy}
\subjclass[2000]{35P25,81U20,35S30,47A10,35B38}
\begin{document}
\begin{abstract}
We study the microlocal structure of the resolvent of the 
semi-classical Schr\"{o}din\-ger 
operator with short range 
potential at an energy which is a unique non-degenerate global maximum of the potential.
We prove that it is a semi-classical Fourier integral operator 
quantizing the incoming 
and outgoing Lagrangian submanifolds associated to the fixed hyperbolic point.
We then discuss two applications of this result to describing the
structure of the spectral function and the scattering matrix of the
Schr\"{o}dinger operator at the critical energy.
\end{abstract}
\maketitle


\section{Introduction} \label{Sintro}

We consider the semiclassical Schr\"odinger operator 
\begin{equation} \label{a43}
P = P_{0} +V,\quad  P_{0} =-\frac{1}{2}h^{2}\Delta,\quad 0<h\ll 1,
\end{equation}
where $V\in C^{\infty}(\mathbb{R}^{n}; \mathbb{R})$, $n>1,$ is a short range 
potential, {\it i.e.}, for some $\rho>1$ and
all $\alpha\in\mathbb{N}^{n}$  
\begin{equation}\label{potential}
\left|\partial^{\alpha}V(x)\right|\leq
C_{\alpha}\bra x\ket^{-\rho- \vert \alpha \vert}, \quad
x\in\mathbb{R}^{n}.
\end{equation}
Then $P$ and $P_0$ admit unique self-adjoint realizations on $L^{2}(\mathbb{R}^{n})$ with domain $H^{2}(\mathbb{R}^{n})$, that we still denote $P$ and $P_0$. 
In this paper, we are interested in the microlocal structure of the resolvent and of the spectral measure of $P$, as well as that of the scattering matrix, at energies which are within $\CO(h)$ of a unique non-degenerate global maximum of the potential. More precisely, we show below that they are  semiclassical Fourier integral operators (for short $h$-FIOs). We confer to Appendix \ref{scanal} and to the references given therein for a short presentation of the theory of  such operators.

The resolvent $\CR(E\pm i0 )$ can be defined thanks to  the limiting absorption principle which states that, for 
$E>0$ and when $\alpha>\frac12$, the limit
\begin{equation*}
\CR(E\pm i0)= \lim_{\varepsilon\searrow 0} (P -\left(E\pm i\varepsilon\right))^{-1}
\end{equation*}
exists in $\mathcal{B} ( L_{\alpha}^{2}(\mathbb{R}^{n}), L_{-\alpha}^{2}(\mathbb{R}^{n}) )$, where
$L_{\alpha}^{2} (\mathbb{R}^{n})=\{f; \  \langle x \rangle^{\alpha} f (x) \in
L^{2}(\mathbb{R}^{n})\}$.
We denote $d\CE_{E}$  the spectral measure of $P$. The spectral function $e_{E}$ is the Schwartz kernel of $\ds\frac{d \CE_{E}}{dE}$, and can be represented through the well-known Stone formula
\begin{equation}
\label{stone}
\frac{d \CE_{E}}{dE}=\frac{1}{2i \pi} \left(\CR(E +i0)-\CR(E-i0)\right),\quad E>0.
\end{equation}

The scattering matrix  ${\mathcal S}(E)$ is defined by means of the wave operators.
We recall that under the assumption \eqref{potential}, the wave operators, defined through the strong limits in $L^2$,
\begin{equation}
\label{waveops}
W_{\pm}= \mathop{\text{s--lim}}_{t\to\pm\infty}e^{- i t P /h} e^{i tP_{0}/h}
\end{equation}
exist and are complete. The scattering operator is then defined as $S=W_{+}^{*}W_{-}: L^{2} (\R^{n}) \to L^{2} (\R^{n})$, and $\CS(E,h) : L^{2} (\S^{n-1}) \to L^{2} (\S^{n-1})$ is given by
\begin{equation*}
S=
\int_{\R^+}^{\oplus}F_{0}(E, h)^{-1}\CS(E, h) F_{0}(E, h) \, d E
\end{equation*}
Here $F_{0}(E, h)$ denotes the bounded operator 
from $L^{2}_{\alpha}(\mathbb{R}^{n})$, $\alpha>1/2$,  to $L^{2}(\mathbb{S}^{n-1})$ given by
\begin{equation}
\label{pmlo1}
\left(F_{0}(E, h)f\right)(\omega)= (2\pi h)^{-{n}/{2}} (2E)^{\frac{n-2}{4}} \int_{\mathbb{R}^{n}} e^{-{i}\sqrt{2E}\langle \omega, x\rangle/h}f(x) \, d x , \quad E>0.
\end{equation}

Notice that most of the results on the scattering matrix are given for the operator
\begin{equation}
\CT(E,h)=\frac{1}{2i\pi}({\rm Id} -\CS(E, h)),
\end{equation}
or for the scattering amplitude
\begin{equation}\label{a53}
\CA(E,h)= c_{0} \, K_{\CT(E,h)},
\end{equation}
where we denote $K_{\CT(E,h)}$ the Schwartz kernel of the operator $\CT(E,h)$ and
\begin{equation*}
c_{0} = c_{0} (n, E, h) = - 2 \pi (2E)^{-{(n-1)}/{4}}(2\pi h)^{{(n-1)}/{2}}e^{-i{(n-3)\pi}/{4}}.
\end{equation*}

The semiclassical behavior  of the spectral function for Schr\"{o}din\-ger-like operators has been studied 
extensively.
Popov and Shubin \cite{PoSh83_01}, Popov \cite{Po85_01}, and Vainberg \cite{Va84_01} have
established high energy asymptotics for the spectral function of second
order elliptic operators under the  assumption that these energies  are non-trapping:

\begin{Def}\sl
The energy $E>0$ is non-trapping if for every $(x, \xi)\in p^{-1}(E)\subset
T^{*}\mathbb{R}^{n}$  we have
\begin{equation*}
\lim_{t\to\pm\infty} \vert \exp(tH_p)(x,\xi)\vert =\infty.
\end{equation*}
Here $p(x, \xi)=\frac12\xi^2+V(x)$ denotes  the principal symbol of $P$, and
\begin{equation*}
H_p=\sum_{j=1}^{n}\left(\frac{\partial p}{\partial 
\xi_{j}}\frac{\partial}{\partial 
x_{j}}-\frac{\partial p}{\partial x_{j}}\frac{\partial}{\partial 
\xi_{j}}\right)
\end{equation*}
is its associated Hamiltonian vector field.
\end{Def}

Robert and Tamura \cite{RoTa88_01} consider the spectral function for
semi-classical Schr\"{o}dinger operators with short range 
potentials and
establish asymptotic expansions at fixed non-trapping energy, and at 
non-critical trapping energies in the sense of a distribution.

The microlocal structure of the spectral function has also been analyzed.
In \cite[Theorem XII.5]{Va89_01} Vainberg establishes a high energy asymptotic expansion of 
the spectral function for compactly supported smooth perturbations of the 
Laplacian assuming that the 
energy 1 is non-trapping. 
This asymptotic expansion is expressed in the form of a Maslov
canonical
operator.

C. G\'erard and Martinez \cite{GeMa89_01} have proved that the spectral function for   
certain
long-range Schr\"{o}dinger operators at non-trapping energies $E$ is
a $h$-FIO associated to the canonical relation
$\left(\cup_{t\in\mathbb{R}}\graph\exp(tH_p)|_{p^{-1}(E)}\right)$.
Near the diagonal $\{ (x, \xi , x, \xi ); \  p(x, \xi)=E\}$ they also 
give the following oscillatory integral representation of
the spectral function
\begin{equation*}
e_{E}(x, y, E, h) = \frac{1}{(2\pi
h)^n}\int_{\mathbb{S}^{n-1}} e^{{i}\varphi(x, y, \omega,
E)/h} a(x, y, \omega, E) d\omega,
\end{equation*}
where $\varphi\in C^{\infty}(\mathbb{R}^{2n}\times\mathbb{S}^{n-1})$ is 
such that 
\begin{equation*}
\big( \frac{\partial \varphi}{\partial x} \big)^2+V(x)=E,
\quad 
\frac{\partial \varphi}{\partial x} \big\vert_{\langle x-y, \omega\rangle=0} =\sqrt{E-V(x)}\omega,
\quad 
\varphi_{\vert_{x=y}} =0.
\end{equation*}

In \cite{Al06_02} the first author has studied the microlocal structure of the 
spectral function restricted away from the diagonal in
$\mathbb{R}^{n}\times\mathbb{R}^{n}$ at trapping energies under the 
assumption of the absence of resonances near the real axis, as well as at 
non-trapping energies.
In these cases the spectral function is shown to be an $h$-FIO
associated to 
$\left(\cup_{t\in\mathbb{R}}\graph\exp(tH_p)|_{p^{-1}(E)}\right)$
near a non-trapped trajectory.
Under a certain geometric assumption \cite{Al06_02} also gives an oscillatory
integral representation of the spectral function of the form
\begin{equation*}
e_{E}(x, y, E) = \int e^{i S(x, y, t) /h} a(x, y, t) \, d t ,
\end{equation*}
where 
\begin{equation*}
S(x, y, t)=\int_{l(t, x, y)} \Big( \frac12 \vert \xi(s) \vert^2 + E -V(x(s)) \Big) \, d s ,
\end{equation*}
is the action over the segment $l(t, x, y)$ of the trajectory which connects $x$ with $y$ at time $t$ and $a\in S_{2n+1}^{\frac{n+3}{2}}(1)$.

The structure of the resolvent in various settings 
has been studied in \cite{Al05_01}, \cite{Al06_01}, 
and \cite{HaWu07_01}.
For compactly supported and short range potentials, the resolvent has been 
shown to be a $h$-FIO associated to the Hamiltonian flow relation of the 
principal symbol of $P$ 
restricted to the energy surface in \cite{Al05_01} and \cite{Al06_01}.
Hassell and Wunsch have studied in \cite{HaWu07_01} the resolvent on asymptotically conic non-trapped manifolds. This class contains in particular 
some asymptotically Euclidean spaces after compactification. They prove that the Schwartz kernel of the resolvent is a Legendrian distribution, that is, roughly speaking, a semiclassical Lagrangian distribution where the semiclassical parameter is the distance to the boundary.

The semiclassical behavior  of the scattering amplitude has also been of significant interest to researchers in 
mathematical
physics. It is well known that  ${\mathcal{A}(E, 
h)}$ satisfies ${\mathcal{A}(E, h)}\in
C^{\infty}(\mathbb{S}^{n-1}\times\mathbb{S}^{n-1}
\backslash\diag(\mathbb{S}^{n-1}\times\mathbb{S}^{n-1}))$. Several authors have proved asymptotic expansions for ${\mathcal{A}(E, h)}$, showing in particular a direct relation with the underlying classical mechanics.

To describe these results, let us recall that, for $(a, b) \in T^{*} \R^{n} \setminus \{ 0 \} = \R^{n} \times \R^{n} \setminus \{ 0 \}$, there is a unique bicharacteristic curve ({\it i.e.} an integral curve of $H_p$)
\begin{equation}
\gamma_{\pm}(t, a, b)=(x_{\pm}(t, a,b) , \xi_\pm(t, a, b)) ,
\end{equation}
such that
\begin{equation}
\begin{aligned}
&\lim_{t\to \pm\infty} \vert x_{\pm} (t,a,b) - b t- a \vert = 0   \\
&\lim_{t\to \pm\infty} \vert \xi_{\pm} (t,a,b)- b \vert = 0.
\end{aligned}
\end{equation}
Moreover
\begin{equation} \label{a47}
\left\{ \begin{aligned}
&T^{*} \R^{n} \setminus \{ 0 \} \longrightarrow && T^{*} \R^{n}  \\
&(a, b) && \gamma (0, a, b)
\end{aligned} \right.
\end{equation}
is a $C^{\infty}$ symplectic diffeomorphism onto its image (see \cite[Section XI.2]{ReSi79_01}).

On the other hand, if a bicharacteristic curve $(x (t ,\rho ) , \xi ( t , \rho )) = \exp (t H_{p} ) ( \rho )$ of positive energy satisfies $\vert x(t , \rho) \vert \to + \infty$ as $t \to+ \infty$, there is $(x_{\infty} , \xi_{\infty} ) = (x_{\infty} ( \rho ) , \xi_{\infty}  (\rho ) )\in T^{*} \R^{n}$ such that
\begin{equation}\label{a50}
\begin{aligned}
&\lim_{t\to +\infty} \vert x (t, \rho) -\xi_{\infty}  t- x_{\infty}  \vert = 0,   \\
&\lim_{t\to +\infty} \vert \xi (t, \rho)- \xi_{\infty}  \vert = 0.
\end{aligned}
\end{equation}
In that case
\begin{equation}\label{a49}
\begin{aligned}
\Theta ( \rho ) =& \frac{\xi_{\infty} }{\vert \xi_{\infty}  \vert}  \in \S^{n-1}  \\
Z ( \rho ) =& x_{\infty} - \< x_{\infty}, \xi_{\infty}  \> \frac{\xi_{\infty} }{\vert \xi_{\infty}  \vert^{2}} \in \Theta^{\perp} \sim \R^{n-1} ,
\end{aligned}
\end{equation}
are called the outgoing {(asymptotic) direction} and  outgoing {impact factor}, respectively.

In particular, for given $E>0$, $\alpha\in \S ^{n-1}$ and $z\in\alpha^{\perp}$ (the impact plane), we define
\begin{equation} \label{a19}
\gamma_{\pm}(t,\alpha,z,E)=(x_{\pm}(t,\alpha,z,E),\xi_\pm(t,\alpha,z,E)) := \gamma_{\pm}(t, z, \sqrt{2E} \alpha ).
\end{equation}
If for some $( \omega , z_{-} ) \in T^* \S^{n-1}$, we have $\vert x_{-} (t,\omega,z_-,E) \vert \to \infty$ as $t\to +\infty$, we denote $x_{\infty}(\omega , z_{-} , E),\xi_{\infty} (\omega , z_{-} , E)$ the quantities defined through \eqref{a50} for the curve $\gamma_{-}(t,\omega,z_{-},E)$. We also denote
\begin{equation}\label{ztheta}
\left\{ \begin{aligned}
&\theta = \theta ( \omega ,z_{-} ,E)= \Theta ( \gamma_{-} (0, \omega , z_{-} , E ))   \\
& z_+=z_+(\omega,z_-,E)=Z( \gamma_{-} (0, \omega , z_{-} ,E )) ,
\end{aligned} \right.
\end{equation}
and we shall say that the trajectory $\gamma_-(t,\omega,z_-,E)$ has initial direction $\omega$ and final direction $\theta$,  or that it is  an $(\omega,\theta)$-trajectory.

\begin{Def}\sl
The outgoing direction $\theta\in \S ^{n-1}$ is called 
 regular  for
the incoming direction $\omega\in\S ^{n-1}$, or $\omega$-regular,  if $\theta\ne\omega$ and, for all $z'\in\omega^{\perp}$ with
$\xi_{\infty}(\omega,z', E)=\sqrt{2E}\theta$, the map $\omega^{\perp}\ni z\mapsto \xi_{\infty}(\omega,z, E)\in\S ^{n-1}$ is non-degenerate at $z'$, i.e. $\widehat{\sigma} (z')\neq 0$ where
$$
\widehat{\sigma} (z')=\vert\det (\xi_\infty(\omega,z',E), \partial_{z_1}\xi_\infty(\omega,z',E),\dots,\partial_{z_{n-1}}\xi_\infty(\omega,z',E))\vert.
$$
\end{Def}

Under the assumption that a certain final direction $\theta$ is
regular for a given initial direction $\omega$, it has been shown that
\begin{equation}\label{vexpansion}
{\mathcal{A}(E, h)}(\theta,
\omega)=\sum_{j=1}^{l}\hat{\sigma}(\omega,z_{j},
E)^{-1/2}\exp (ih^{-1}S_{j}-i\mu_{j}\pi/2 )+\mathcal{O} (h),
\end{equation}
where
$$
\left(z_{j}\right)_{j=1}^{l}=\big(\xi_{\infty}^{-1}(\sqrt{2E}\omega,
\cdot ,E)\big) (\theta),
$$
and
\begin{equation}\label{modaction}
S_j=\int_{-\infty}^{\infty}
\big(
\vert \xi_{-}(t,\omega,z_{j},E)
\vert^{2} -2E\big) d t
-
\langle
x_{\infty}(\omega, z_{j},E),
\sqrt{2E}\theta(\omega,z_j,E)
\rangle
\end{equation} 
is a modified action along the $j$-th $(\omega, \theta)$-trajectory,
and $\mu_{j}$ is the Maslov index of that trajectory.
Such a result has been obtained by Vainberg \cite{Va84_01}, who has studied smooth compactly
supported potentials $V$ at energies
$E>\sup V$.
Guillemin \cite{Gu76_01} has established a similar asymptotic expansion in the setting of smooth
compactly-supported metric perturbations of the Laplacian.
Working with trapping potential perturbations of the Laplacian satisfying
\eqref{potential} with $\rho>\max\left(1, \frac{n-1}{2}\right)$, Yajima
\cite{Ya87_01} has proved such an asymptotic expansion in the $L^{2}$ sense.
For non-trapping short-range ($\rho>1$) potential perturbations of the Laplacian, Robert and
Tamura \cite{RoTa89_01} have proven that \eqref{vexpansion} holds 
pointwise.
Their result has been extended to the case of
trapping energies by
Michel \cite{Mi04_01} under an additional assumption on the distribution of the resonances of $P$.

First to study the microlocal structure of the scattering amplitude was 
Protas \cite{Pr82_01}.
He has shown that at non-trapping energies and for fixed initial
directions the scattering amplitude is a Maslov canonical 
operator associated to  some natural Lagrangian
submanifolds of $T^{*}\mathbb{S}^{n-1}$.
This representation of the scattering amplitude is shown to hold
uniformly in an open set
containing the final direction and disjoint from the initial direction.

In \cite{Al05_01} and  \cite{Al06_01} the first author has proved, without making the non-degeneracy assumption, that for short-range Schr\"odinger operators satisfying a polynomial estimate for their resolvent, the scattering
amplitude  is an $h$-FIO
associated to the scattering relation microlocally near a non-trapped trajectory.
The scattering relation for a short range potential at an energy $E>0$ is defined near a 
non-trapped trajectory as follows.
If $\gamma_0: t\mapsto\gamma_-(t,\omega_0,z_0,E)$ is non-trapped, there exists an open set $U\subset T^{*}\mathbb{S}^n$ with $(\omega_0,
z_0)\in U$ such that for every $(\omega, z_-)\in U$ the trajectory
$t\mapsto\gamma_-(t,z, \omega_0,E)$ is 
non-trapped. The scattering relation near $\gamma_0$ is given by (see Figure \ref{fig:sr})
\begin{equation}\label{a52}
{\mathcal {SR}}(E)= \{ (\theta(\omega,z_-,E), -\sqrt{2E}z_+(\omega,z_-,E),\omega, -\sqrt{2E}z_-); \  (\omega,z_-)\in U \} ,
\end{equation}
where $\theta$ and $z_+$ are defined in (\ref{ztheta}).

\begin{figure}\label{fig:sr}
\begin{center}
\begin{picture}(0,0)%
\includegraphics{sabtd1.ps}%
\end{picture}%
\setlength{\unitlength}{1184sp}%
\begingroup\makeatletter\ifx\SetFigFont\undefined%
\gdef\SetFigFont#1#2#3#4#5{%
  \reset@font\fontsize{#1}{#2pt}%
  \fontfamily{#3}\fontseries{#4}\fontshape{#5}%
  \selectfont}%
\fi\endgroup%
\begin{picture}(15119,11519)(-4296,-10733)
\put(5326,-4786){\makebox(0,0)[lb]{\smash{{\SetFigFont{9}{10.8}{\rmdefault}{\mddefault}{\updefault}$\theta$}}}}
\put(5026, 89){\makebox(0,0)[lb]{\smash{{\SetFigFont{11}{13.2}{\rmdefault}{\mddefault}{\updefault}$\omega^{\perp}$}}}}
\put(9901,-7486){\makebox(0,0)[lb]{\smash{{\SetFigFont{11}{13.2}{\rmdefault}{\mddefault}{\updefault}$z_{+}$}}}}
\put(-2474,-1411){\makebox(0,0)[lb]{\smash{{\SetFigFont{11}{13.2}{\rmdefault}{\mddefault}{\updefault}$\gamma_{-} ( t, z_{-} , \omega , E_{0} )$}}}}
\put(8851,-9211){\makebox(0,0)[lb]{\smash{{\SetFigFont{11}{13.2}{\rmdefault}{\mddefault}{\updefault}$\theta$}}}}
\put(751,-7711){\makebox(0,0)[lb]{\smash{{\SetFigFont{11}{13.2}{\rmdefault}{\mddefault}{\updefault}$\theta^{\perp}$}}}}
\put(-1049,-4486){\makebox(0,0)[lb]{\smash{{\SetFigFont{11}{13.2}{\rmdefault}{\mddefault}{\updefault}$z_{-}$}}}}
\put(-4199,-1036){\makebox(0,0)[lb]{\smash{{\SetFigFont{11}{13.2}{\rmdefault}{\mddefault}{\updefault}$\omega$}}}}
\put(4951,-3586){\makebox(0,0)[lb]{\smash{{\SetFigFont{9}{10.8}{\rmdefault}{\mddefault}{\updefault}$-\omega$}}}}
\end{picture}%
\end{center}
\caption{The scattering relation consists of the points
$(\theta,-\sqrt{2E_{0}}z_+,\omega, -\sqrt{2E_{0}}z_-)$ related as in this figure.}
\end{figure}

It is also explained in \cite{Al05_01} how the expansion \eqref{vexpansion} follows from this result
once the non-degeneracy assumption on the initial and final directions is made.
The asymptotic expansion obtained is more
general than the one given in \eqref{vexpansion} in that it holds microlocally
near $\left(\omega, \theta\right)$ trajectories and not only for fixed
initial and final directions.

In the context of scattering on manifold with boundary, Hassell and Wunsch \cite{HaWu07_01} have shown that the scattering matrix at non-trapping energies is a Legendrian-Lagrangian
distribution associated to the total sojourn relation.
In \cite{Va07_01}, Vasy has also studied the scattering matrix on asymptotically De Sitter-like spaces (a large class of non-trapped spaces with two asymptotically hyperbolic ends). Under the assumption that the bicharacteristic curves go from one end to the other, he has proved that the scattering matrix is a FIO associated to the natural relation between these two ends.

In this paper we continue the study of the scattering matrix for energies which are within $\CO(h)$ of a unique non-degenerate global maximum of the potential. In that setting, in the one-dimensional case, the scattering matrix is a 2 by 2 matrix, and the semiclassical expansion of its coefficient has been given by the third author in \cite{Ra96_01}. The computations there rely on complex WKB constructions for the generalized eigenfunctions, as well as a microlocal reduction to a normal form near the maximum point.

For such a critical energy, we have already  studied the scattering
amplitude in the  $n$-dimensional case:  In \cite{AlBoRa07_01}, we have established the semiclassical expansion  of the scattering amplitude. In that paper, we use Robert and Tamura's formula (see \eqref{Gu76_01} below) for the scattering amplitude. This formula itself relies on Isosaki and Kitada's construction of a suitable approximation for the wave operators, and, roughly speaking, reduces the problem to that of the description of generalized 
eigenfunctions in a compact set. To do so, we  essentially follow the study in \cite{BoFuRaZe07_01}, to obtain such a description in a neighborhood of the critical point. 

In the present paper  we describe the microlocal structure of the spectral 
function and of the scattering matrix at such energies. More precisely we  show that they are  $h$-FIO's associated to quite canonical relations.  To the contrary of \cite{AlBoRa07_01}, we do not suppose the non-degeneracy assumption, and we state no geometrical assumptions concerning the behavior of the incoming and outgoing stable manifolds at infinty. However the results below are valid in a somewhat smaller region of the phase space. Of course one recovers  parts of the results of \cite{AlBoRa07_01} in that smaller region once the geometric assumptions alluded to above are made.

\section{Assumptions and main results}

We suppose that the potential $V$ is a short-range, $C^\infty$ function on $\R^n$ (see (\ref{potential})),  and we make the following further assumptions:

\begin{itemize}
\item[{\it (A1)}]\label{a1}
$V$ has a non-degenerate global maximum at $x=0$, with $V (0) =E_0 >0$. We can always suppose that
\begin{equation*}
V(x)=E_0-\sum_{j=1}^{n}\frac{\lambda_{j}^{2}}{2} x_{j}^{2}
+\mathcal{O}(x^{3}), \quad x \to 0,
\end{equation*}
where $0 < \lambda_{1} \leq \lambda_{2} \leq\ldots \leq \lambda_{n}$.

\item[{\it (A2)}]\label{a2} The trapped set at energy $E_0$ is reduced to $(0,0)$, namely
\begin{equation*}
\{(x, \xi)\in p^{-1}(E_0) ; \ \exp\left(t H_p\right)\left(x, 
\xi\right)\nrightarrow \infty \text{ as } 
t\to \pm\infty\}=\{(0, 0)\}.
\end{equation*}
\end{itemize}
Then, the linearized vector field of
$H_{p}$ at $\left(0, 0\right)$ is
\begin{equation*}
d_{\left(0, 0\right)}H_{p}=\left(
\begin{array}{cc}
0& {\rm Id} \\
{\diag(\lambda_{1}^{2},
\dots, \lambda_{n}^{2})}&0
\end{array}
\right ).
\label{linearise}
\end{equation*}
and, by the stable/unstable manifold theorem, there exist Lagrangian submanifolds
$\Lambda_{\pm}$ of $T^* \mathbb{R}^{n}$ (see Figure \ref{fig:lambdapm}) satisfying
\begin{equation*}
\Lambda_{\pm}=\left\{(x,\xi)\in T^* \mathbb{R}^{n} ; \ \exp ( tH_{p}) (x,\xi)\to
\left(0, 0\right)
\text{ as } t\to \mp\infty\right\}\subset p^{-1}(E_0)
\end{equation*}

Notice that the assumptions {\it (A1)} and {\it (A2)} imply that $V$ has an absolute global maximum at $x=0$. Indeed, if ${\mathcal L} = \{ x\neq 0; \ V (x) \geq E_{0} \}$ was non empty, the geodesic, for the Agmon distance $(E_{0} - V(x))_{+}^{1/2} dx$, between $0$ and ${\mathcal L}$ would be  the projection of a trapped bicharacteristic (see \cite[Theorem 3.7.7]{AbMa78_01}).

\begin{figure}[!h]\label{fig:lambdapm}
\begin{center}
\begin{picture}(0,0)%
\includegraphics{sabtd2.ps}%
\end{picture}%
\setlength{\unitlength}{1184sp}%
\begingroup\makeatletter\ifx\SetFigFont\undefined%
\gdef\SetFigFont#1#2#3#4#5{%
  \reset@font\fontsize{#1}{#2pt}%
  \fontfamily{#3}\fontseries{#4}\fontshape{#5}%
  \selectfont}%
\fi\endgroup%
\begin{picture}(8444,8444)(1779,-8183)
\put(7426,-7636){\makebox(0,0)[lb]{\smash{{\SetFigFont{9}{10.8}{\rmdefault}{\mddefault}{\updefault}$\xi = - \lambda x$}}}}
\put(9826,-2461){\makebox(0,0)[lb]{\smash{{\SetFigFont{11}{13.2}{\rmdefault}{\mddefault}{\updefault}$\Lambda_{+}$}}}}
\put(9751,-5611){\makebox(0,0)[lb]{\smash{{\SetFigFont{11}{13.2}{\rmdefault}{\mddefault}{\updefault}$\Lambda_{-}$}}}}
\put(9901,-3736){\makebox(0,0)[lb]{\smash{{\SetFigFont{9}{10.8}{\rmdefault}{\mddefault}{\updefault}$x$}}}}
\put(6226, 14){\makebox(0,0)[lb]{\smash{{\SetFigFont{9}{10.8}{\rmdefault}{\mddefault}{\updefault}$\xi$}}}}
\put(7876,-436){\makebox(0,0)[lb]{\smash{{\SetFigFont{9}{10.8}{\rmdefault}{\mddefault}{\updefault}$\xi = \lambda x$}}}}
\end{picture}%
\end{center}
\caption{The incoming $\Lambda_{-}$ and outgoing $\Lambda_{+}$ 
Lagrangian 
submanifolds}
\end{figure}

We recall from \cite{HeSj85_01} that if
$\rho_\pm\in\Lambda_\pm$ and $(x_\pm(t, \rho_\pm), \xi_\pm(t, \rho_\pm) ) = \exp (tH_p ) (\rho_\pm)$ is the bicharacteristic starting from $\rho_\pm$,
then for some
$g_\pm\in C^{\infty}\left(\Lambda_{\pm}; \mathbb{R}^{n}\right)$ and 
$\varepsilon>0$, 
\[x_\pm(t; 
\rho_\pm)=g_\pm(\rho_\pm)e^{\pm\lambda_{1}t}+\mathcal{O}(e^{\pm
(\lambda_{1}+\varepsilon)t}) \text{ as } t\to\mp\infty.\]
We let 
\begin{equation*}
\widetilde{\Lambda_{+}\times\Lambda_{-}}=
\big\{(\rho_{+}, \rho_{-})\in\Lambda_{+}\times\Lambda_{-} ; \
\langle 
g_{+}(\rho_{+}), g_{-}(\rho_{-})\rangle\ne 
0\big\},
\end{equation*}
and define $\widetilde{\Lambda_{-}\times\Lambda_{+}}$ 
analogously.
\begin{remark}\sl
The reader may notice that if $\lambda_2 > \lambda_1$, then, by \cite[(6.96)]{AlBoRa07_01}, 
the vectors $g_{\pm}(\rho)$ are for any $\rho$ collinear with $(1, 0, \dots, 0)\in\mathbb{R}^{n}.$   
Therefore, $\widetilde{\Lambda_+\times\Lambda_-}=\Lambda_+ \setminus 
\widetilde{\Lambda_+} \times\Lambda_-
\setminus \widetilde{\Lambda_-}$, where $\widetilde{\Lambda_\pm} = \{\rho\in\Lambda_\pm ; \  g_\pm(\rho)=0\}.$ 
We recall from \cite{BoFuRaZe07_01} that in this case $\dim 
\widetilde{\Lambda_{\pm}}=n-1$.
\end{remark}

Our main result is the following

\begin{Th}\label{ThRes}\sl 
Assume {\it (A1)} and {\it (A2)}. Let $E=E_0 +hE_1$, with  $E_1\in ]-C_0, C_0[$ for some $C_0>0$. 
Then, microlocally near any $(\rho_+,\rho_-)\in \widetilde{\Lambda_+ \times \Lambda_-}$  we have
$$
\CR(E+i0)\in \CI_h^{1-\frac{\sum_{j=1}^{n}\lambda_{j}}{2\lambda_1}} \big( \R^n\times \R^n,\Lambda_+\times \Lambda_- {}' \big),
$$
and, microlocally near any $(\rho_-,\rho_+)\in \widetilde{\Lambda_- \times \Lambda_+}$,
$$
\CR(E-i0)\in \CI_h^{1-\frac{\sum_{j=1}^{n}\lambda_{j}}{2\lambda_1}} \big( \R^n\times \R^n,\Lambda_-\times \Lambda_+ {}' \big) .
$$
\end{Th}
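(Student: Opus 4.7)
The approach I would take combines a microlocal normal-form reduction near the hyperbolic fixed point $(0,0)$ with a time-dependent representation of the resolvent, followed by a careful analysis of the long-time propagation that transports mass from $\Lambda_-$ onto $\Lambda_+$.

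\textbf{Overall strategy.} I would start from the standard identity
\[
\CR(E+i0) = \frac{i}{h}\int_{0}^{+\infty} e^{i\tau(E-P)/h}\, d\tau,
\]
regularized by inserting $e^{-\varepsilon \tau}$ and letting $\varepsilon \searrow 0$, and study its kernel microlocally near a pair $(\rho_{+},\rho_{-})\in \widetilde{\Lambda_{+}\times\Lambda_{-}}$. For $\tau$ in a bounded interval $[0,T]$, Egorov's theorem identifies $e^{-i\tau P/h}$ with an $h$-FIO associated with $\graph\exp(\tau H_{p})$. Since the forward flow preserves $\Lambda_{-}$ and no finite-time trajectory connects $\Lambda_{-}$ to $\Lambda_{+}$, this bounded-time part contributes nothing microlocally at $(\rho_{+},\rho_{-})$. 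The whole statement is therefore a long-time assertion: the transfer from $\Lambda_{-}$ to $\Lambda_{+}$ is realized only through bicharacteristics that come arbitrarily close to the fixed point, and it is this contribution that must be analyzed.

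\textbf{Reduction to a model near $(0,0)$.} Microlocally near $(0,0)$ I would use the symplectic normal form for a non-degenerate hyperbolic fixed point, in the spirit of \cite{BoFuRaZe07_01}: there exists a canonical transformation $\kappa$ with $\kappa^{*}p - E_{0} = \sum_{j=1}^{n}\lambda_{j} x_{j}\xi_{j} + \CO((x,\xi)^{\infty})$ near the origin, whose quantization by a microlocally unitary $h$-FIO $U$ conjugates $P-E_{0}$ to a model operator $M$ of the form
\[
M = \sum_{j=1}^{n}\lambda_{j}\, \frac{x_{j} h D_{x_{j}} + h D_{x_{j}} x_{j}}{2} + \CO(h^{\infty})
\]
modulo negligible terms. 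For the model, the classical flow is a pure dilation $(x,\xi)\mapsto(e^{\lambda_{j}\tau}x_{j},e^{-\lambda_{j}\tau}\xi_{j})$, and the quantum propagator $e^{-i\tau M/h}$ acts as a product of dilations with a Jacobian factor $\prod_{j}e^{\lambda_{j}\tau/2}$. Substituting this into the regularized time integral and performing the change of variables $s_{j}=e^{-\lambda_{j}\tau}$ converts the integrand into a Mellin-type integral that can be evaluated explicitly. Its Schwartz kernel is naturally supported on the model's incoming/outgoing Lagrangians $\{x=0\}$ and $\{\xi=0\}$ (the stable and unstable manifolds of the model), and carries the correct oscillatory-integral symbol to be an $h$-FIO with the claimed order.

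\textbf{Assembling the global picture.} To pass from the microlocal picture near $(0,0)$ to a statement at $(\rho_{+},\rho_{-})$, I would propagate the model kernel forward along $\Lambda_{+}$ and backward along $\Lambda_{-}$ using the Schr\"odinger group (which is an $h$-FIO quantizing the Hamiltonian flow), taking advantage of the fact that $\Lambda_{\pm}$ are flow-invariant. The non-trapping hypothesis \textit{(A2)} guarantees that any neighborhood of $\rho_{-}$ in $\Lambda_{-}$ enters the normal-form chart in finite positive time, and symmetrically that $\rho_{+}$ is reached from the chart in finite positive time along $\Lambda_{+}$. The composition with $U$ and with the finite-time propagators preserves the FIO class and the Lagrangian structure, giving the statement for $\CR(E+i0)$; the conjugate statement for $\CR(E-i0)$ is obtained by taking adjoints, using that $\CR(E-i0)^{*} = \CR(E+i0)$ and that the roles of $\Lambda_{\pm}$ are exchanged.

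\textbf{Main obstacle.} The hard part is the uniform long-time analysis near the trapped critical point. Three points require care. First, the time integral is divergent at $\tau = +\infty$ and must be interpreted in the distribution-valued sense compatible with the $i0$ prescription; this is what controls the exponent $1-\frac{\sum_{j}\lambda_{j}}{2\lambda_{1}}$, with the slowest eigenvalue $\lambda_{1}$ governing the tail decay and the Jacobian $\prod_{j}e^{\lambda_{j}\tau/2}$ producing the remaining contribution. Second, when $\lambda_{1}<\lambda_{j}$ for some $j$, trajectories approach $(0,0)$ at different rates along different eigendirections, and one must verify that the non-slowest modes are absorbed into the FIO symbol rather than producing uncontrolled oscillations. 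Third, the non-degeneracy assumption $\langle g_{+}(\rho_{+}), g_{-}(\rho_{-})\rangle\neq 0$ built into $\widetilde{\Lambda_{+}\times\Lambda_{-}}$ is precisely what makes the stationary phase in the time variable non-degenerate: the vectors $g_{\pm}$ encode the leading behavior of bicharacteristics along the slowest unstable/stable direction, and their non-orthogonality ensures that the $\tau$-integral produces a genuine (non-degenerate) $h$-FIO rather than a distribution concentrated on a lower-dimensional set.
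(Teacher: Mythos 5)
Your strategy (time-integral representation of $\CR(E+i0)$, reduction to a quantum normal form at the hyperbolic fixed point, explicit Mellin-type evaluation for the model, then finite-time propagation out to $(\rho_+,\rho_-)$) is genuinely different from the paper's, and it contains a step that fails in general: the reduction of $P-E_0$ near $(0,0)$ to the model $M=\sum_j\lambda_j(x_jhD_{x_j}+hD_{x_j}x_j)/2$ modulo $\CO(h^\infty)$. Such a complete Birkhoff normal form requires non-resonance conditions on $(\lambda_1,\dots,\lambda_n)$, whereas assumption \textit{(A1)} allows arbitrary $0<\lambda_1\le\dots\le\lambda_n$ (in particular $\lambda_1=\lambda_2$, or rational relations among the $\lambda_j$). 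Even in the non-resonant case the normal form is a general function of the actions $x_j\xi_j$, not the linear one, so the propagator is not the pure dilation group and the explicit Mellin computation does not go through as stated. The paper avoids normal forms entirely: the whole fixed-point analysis is packaged into the connection operator $\mathcal{J}(E)$ of \cite{BoFuRaZe07_01}, which is constructed by a direct WKB/transport analysis on $\Lambda_\pm$ and is already known to lie in $\CI_h^{-\sum_j\lambda_j/2\lambda_1}(\R^n\times\R^n,\Lambda_+\times\Lambda_-{}')$ for arbitrary eigenvalues. If you do not want to cite that result, you must reprove it, and the normal-form route is not an adequate substitute.

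A second gap is that the long-time part of your representation — which you correctly identify as carrying the entire content of the theorem — is only named, not controlled. The paper's route is instructive here: setting $\mathcal{K}=\CR(E+i0)e^{iT_1(P-E)/h}\Op(\alpha^-)$, one observes that $(P-E)\mathcal{K}$ vanishes microlocally near $(0,0)$, and one determines $\mathcal{K}$ near $\{(x,\xi)\in\Lambda_-:\,|x|=\varepsilon\}$ by splitting off $e^{-iT(P-E)/h}\mathcal{K}$ and killing it there via the a priori bound $\Vert\CR(E\pm i0)\Vert=\CO(|\log h|/h)$ of \cite{AlBoRa07_01} together with incoming/outgoing estimates and propagation of singularities. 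This reduces everything to a \emph{finite}-time integral of the propagator feeding into $\mathcal{J}(E)$, after which only the clean-intersection composition calculus is needed (the excess $e=1$ compositions are what produce the "$1$" in the order $1-\sum_j\lambda_j/2\lambda_1$, rather than a $\tau$-stationary phase). Without some such a priori resolvent input, your regularized $\int_0^\infty$ cannot be truncated or localized microlocally. Finally, the hypothesis $\langle g_+(\rho_+),g_-(\rho_-)\rangle\neq0$ does not enter as non-degeneracy of a stationary phase in $\tau$; it is the condition under which the BFRZ transition operator is a nondegenerate $h$-FIO from the incoming data at $\rho_-$ to a neighborhood of $\exp(-T_1H_p)(\rho_+)$ on $\Lambda_+$.
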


Concerning the spectral function, using Stone's Formula (\ref{stone}), we obtain immediately  the

\begin{Co}\sl
Assume {\it (A1)} and {\it (A2)}. Let $E=E_0+hE_1$ where $E_1\in ]-C_0, 
C_0[$ for some $C_0>0$. Then the spectral function at the energy $E$ 
satisfies, microlocally near $(\rho_1,\rho_2)\in\widetilde{\Lambda_+ \times 
\Lambda_-}\cup\widetilde{\Lambda_-\times\Lambda_+}$,
\begin{equation*}
e_E\in \CI_{h}^{1 -\frac{\sum_{j=1}^{n}\lambda_{j}}{2\lambda_1}}
\big (\R^n\times \R^n, \Lambda_+ \times 
\Lambda_-\cup \Lambda_-\times\Lambda_+ {}' \big).
\end{equation*}
\end{Co}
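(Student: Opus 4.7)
The plan is to combine Stone's formula \eqref{stone} with Theorem \ref{ThRes}. Since $e_E$ is the Schwartz kernel of $\frac{1}{2 i \pi}(\CR(E+i0)-\CR(E-i0))$, it suffices to show that at each point of $\widetilde{\Lambda_+\times\Lambda_-}\cup\widetilde{\Lambda_-\times\Lambda_+}$ exactly one of the two boundary values of the resolvent supplies the desired Lagrangian class, while the other is microlocally negligible.

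First I would fix $(\rho_1,\rho_2)\in\widetilde{\Lambda_+\times\Lambda_-}$. Theorem \ref{ThRes} gives directly that $\CR(E+i0)$ lies in $\CI_h^{1-\sum_j\lambda_j/(2\lambda_1)}(\R^n\times\R^n,\Lambda_+\times\Lambda_-{}')$ microlocally near $(\rho_1,\rho_2)$. To eliminate the contribution of $\CR(E-i0)$ at this point I would invoke three elementary geometric facts: (i) by {\it (A2)} and the stable/unstable manifold theorem, $\Lambda_+\cap\Lambda_-=\{(0,0)\}$; (ii) the defining condition $\langle g_+(\rho_1),g_-(\rho_2)\rangle\neq 0$ forces both $g_+(\rho_1)$ and $g_-(\rho_2)$ to be nonzero, hence $\rho_1\neq (0,0)$ and $\rho_2\neq (0,0)$; (iii) combining (i) and (ii) yields $\rho_1\in\Lambda_+\setminus\Lambda_-$, $\rho_2\in\Lambda_-\setminus\Lambda_+$, and in particular $\rho_1\neq\rho_2$. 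Item (iii) excludes $(\rho_1,\rho_2)$ both from the diagonal and from the Lagrangian $\Lambda_-\times\Lambda_+{}'$, which are the only locations where $\CR(E-i0)$ can carry microlocal mass.

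The symmetric analysis at a point $(\rho_1,\rho_2)\in\widetilde{\Lambda_-\times\Lambda_+}$ produces the opposite conclusion: $\CR(E-i0)$ supplies the $h$-FIO class on $\Lambda_-\times\Lambda_+{}'$ while $\CR(E+i0)$ is microlocally $O(h^\infty)$. Stone's formula then assembles the two cases into the claimed membership of $e_E$ in $\CI_h^{1-\sum_j\lambda_j/(2\lambda_1)}(\R^n\times\R^n,\Lambda_+\times\Lambda_-\cup\Lambda_-\times\Lambda_+{}')$ microlocally near every point of $\widetilde{\Lambda_+\times\Lambda_-}\cup\widetilde{\Lambda_-\times\Lambda_+}$.

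The only delicate point in this plan is the claim that the ``wrong'' boundary value of the resolvent carries no wavefront beyond the diagonal and the relevant Lagrangian. I expect this to follow either from the parametrix constructed in the proof of Theorem \ref{ThRes}, or from a standard propagation-of-singularities argument, which is available under {\it (A2)} away from the unique trapped point $(0,0)$; either route is routine in the semi-classical framework once Theorem \ref{ThRes} is in hand.
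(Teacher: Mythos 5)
Your route is the paper's: the corollary is presented there as an immediate consequence of Stone's formula and Theorem \ref{ThRes}, with no further argument, so the care you take over the cross terms is the only substantive content to check. Your geometric observations (i)--(iii) are correct: by \emph{(A2)} one has $\Lambda_+\cap\Lambda_-=\{(0,0)\}$, and $g_\pm((0,0))=0$ forces any $(\rho_1,\rho_2)\in\widetilde{\Lambda_+\times\Lambda_-}$ off the diagonal and off $\Lambda_-\times\Lambda_+$. The one inaccuracy is the assertion that the diagonal and $\Lambda_-\times\Lambda_+{}'$ are ``the only locations where $\CR(E-i0)$ can carry microlocal mass'': the incoming resolvent has wavefront along the entire flow-out relation $\{(\exp(-tH_p)\rho,\rho);\ t\geq 0,\ p(\rho)=E_0\}$, which is much larger than its limiting piece $\Lambda_-\times\Lambda_+$, so excluding only the diagonal and $\Lambda_-\times\Lambda_+$ does not by itself dispose of $\CR(E-i0)$ near $(\rho_1,\rho_2)$. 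The gap is easily closed: if $(\rho_1,\rho_2)$ lay in the flow-out relation, then $\rho_2=\exp(tH_p)(\rho_1)$ for some $t$, and flow-invariance of $\Lambda_+$ would give $\rho_2\in\Lambda_+$, contradicting $\rho_2\in\Lambda_-\setminus\{(0,0)\}$. With that correction the propagation-of-singularities step you invoke at the end does apply, and Stone's formula assembles the two cases exactly as you describe.
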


Now we pass to our result concerning  the scattering matrix.
We denote (see Figure \ref{newscatrel})
\begin{align*}
&\Lambda_{+}^\infty=\{(\theta, - \sqrt{2E_{0}}z_+)\in T^*\S^{n-1}; \ \gamma_+(t,z_+,\theta,E_{0})\in \Lambda_+\},
\\
&\Lambda_{-}^\infty=\{(\omega, - \sqrt{2E_{0}}z_-)\in T^*\S^{n-1}; \ \gamma_-(t,\omega,z_-,E_{0})\in \Lambda_-\}.
\end{align*}
Notice that $\Lambda_{\pm}^\infty$ are submanifolds of $T^*\S^{n-1}$ of dimension $n-1$, since the map $( \alpha ,z) \mapsto \gamma_{\pm} (0,\alpha ,z,E)$ is a $C^{\infty}$ diffeomorphism.
We set also
\begin{align*}
\widetilde{\Lambda_{+}^\infty\times \Lambda_{-}^\infty} = \big\{ (\theta , - \sqrt{2E_{0}} z_+,\omega, - \sqrt{2E_{0}}z_-) & \in \Lambda_{+}^\infty \times \Lambda_{-}^\infty ; \\
\big\< & g_+(\gamma_+(0,z_+,\theta,E_{0} )),g_-(\gamma_-(0,\omega,z_-,E_{0} )) \big\> \neq 0 \big\}.
\end{align*}

\begin{Th}\label{Thsa}\sl 
Assume {\it (A1)} and {\it (A2)}. Let $E=E_0 +hE_1$, with $E_1\in ]-C_0, C_0[$
for some $C_0>0.$
Then, if $\omega\neq \theta$, microlocally near $(\theta,\sqrt{2E_{0}}z_+,\omega,\sqrt{2E_{0}}z_-)\in \widetilde{\Lambda_{+}^\infty\times \Lambda_{-}^\infty}$,
\begin{equation*}
\CS(E,h)\in\mathcal{I}_{h}^{\frac{1}{2} - \frac{\sum_{j=1}^{n}
\lambda_{j}}{2\lambda_{1}}} \big( \mathbb{S}^{n-1}\times\mathbb{S}^{n-1}, 
\Lambda^\infty_{+}\times\Lambda^\infty_{-} {}'\big).
\end{equation*}
\end{Th}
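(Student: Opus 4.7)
The plan is to derive Theorem~\ref{Thsa} from Theorem~\ref{ThRes} by combining a representation formula for $\CS(E,h)$ in terms of the resolvent with the composition calculus for semiclassical Fourier integral operators. Heuristically, the scattering matrix is obtained from the resolvent by ``restricting to infinity'' on both sides through the operators $F_{0}(E,h)$, and the Lagrangians $\Lambda_{\pm}$ are thereby pushed forward to their asymptotic traces $\Lambda_{\pm}^\infty$.

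I would start from a representation formula of Isozaki--Kitada type for the $T$-matrix $\CT(E,h)=\frac{1}{2i\pi}(\mathrm{Id}-\CS(E,h))$. Since $\omega\neq\theta$ in the statement, the Born term $F_{0}VF_{0}^{*}$ is microlocally $\CO(h^{\infty})$ near the point of interest, and one is reduced, microlocally, to a formula of the form
\[
\CT(E,h) = - F_{0}(E,h)\, V\, \CR(E+i0)\, V\, F_{0}(E,h)^{*} + \CO(h^{\infty}),
\]
or, alternatively, to a formula involving Isozaki--Kitada modifiers $J_{\pm}$ (which here simply truncate to incoming/outgoing cones since $V$ is short range, as in \cite{AlBoRa07_01}). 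The microlocal cut-off to $\widetilde{\Lambda_{+}^\infty\times\Lambda_{-}^\infty}$ guarantees that the bicharacteristics one has to deal with are asymptotically free in the appropriate time direction and bounded away from the trapped point $(0,0)$ in the corresponding tail, which is exactly the setting where such a formula can be justified.

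By Theorem~\ref{ThRes}, the operator $V\,\CR(E+i0)\,V$ belongs to $\CI_{h}^{1-\sum_{j}\lambda_{j}/(2\lambda_{1})}(\R^{n}\times\R^{n},\Lambda_{+}\times\Lambda_{-}{}')$ microlocally near the point $(\rho_{+},\rho_{-})\in\widetilde{\Lambda_{+}\times\Lambda_{-}}$ corresponding to $(\theta,z_{+},\omega,z_{-})$ via $\rho_{\pm}=\gamma_{\pm}(0,\cdot)$. One has then to compose this FIO with $F_{0}(E,h)$ on the left and with $F_{0}(E,h)^{*}$ on the right. At the level of Lagrangians, composition with $F_{0}(E,h)$ amounts to a push-forward under the asymptotic map $\rho\mapsto(\Theta(\rho),-\sqrt{2E_{0}}Z(\rho))$, which by the very definition of $\Lambda_{\pm}^\infty$ sends $\Lambda_{\pm}$ onto $\Lambda_{\pm}^\infty$. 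The non-degeneracy condition defining $\widetilde{\Lambda_{+}\times\Lambda_{-}}$ transfers to the condition defining $\widetilde{\Lambda_{+}^\infty\times\Lambda_{-}^\infty}$, because the vectors $g_{\pm}$ encode the leading asymptotic behavior along $\Lambda_{\pm}$ and survive the push-forward. For the order, each application of $F_{0}$ reduces the base dimension from $n$ to $n-1$ and, by the standard order convention for $h$-FIOs, shifts the order by $-\frac{1}{4}$; the two compositions together contribute $-\frac{1}{2}$, producing the announced value $\frac{1}{2} - \sum_{j}\lambda_{j}/(2\lambda_{1})$.

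The main obstacle will be to execute the composition with $F_{0}(E,h)$ cleanly and to identify the resulting Lagrangian in $T^{*}(\S^{n-1}\times\S^{n-1})$ as $\Lambda_{+}^\infty\times\Lambda_{-}^\infty$. Concretely one must verify that, near any $\rho\in\Lambda_{\pm}$ whose forward (resp.\ backward) orbit escapes to infinity, the asymptotic map $\rho\mapsto(\Theta(\rho),Z(\rho))$ is a local diffeomorphism from $\Lambda_{\pm}$ onto $\Lambda_{\pm}^\infty$ (this uses non-vanishing of the asymptotic momentum $\xi_{\infty}$ away from $(0,0)$ and the short-range decay of $V$), and that the composition with $F_{0}(E,h)$ realizes this push-forward at the level of $h$-FIO classes with a clean (in fact transverse) intersection, so that no caustic is created and the order bookkeeping remains as claimed. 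A secondary subtlety is the rigorous derivation of the Isozaki--Kitada-type formula at the critical energy $E_{0}+hE_{1}$: the usual derivations are carried out for non-trapping energies, but assumption \emph{(A2)} together with the microlocal cut-off to $\widetilde{\Lambda_{+}^\infty\times\Lambda_{-}^\infty}$ (which stays away from the trapped set in the relevant tail) should be enough to adapt the standard arguments in the region of phase space under consideration.
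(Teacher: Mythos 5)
Your overall route is the same as the paper's: reduce, via the Isozaki--Kitada/Robert--Tamura representation and the fact that the Born-type terms $T_{\pm1}$ are $\CO(h^{\infty})$ off the diagonal, to a formula $\CS(E,h)=c_{1}\,\mathcal{M}_{+}^{*}\,\CR(E+i0)\,\mathcal{M}_{-}$ microlocally near the point of interest, where $\mathcal{M}_{\pm}$ are the $h$-FIOs with kernels $e^{i\Phi_{\pm}(x,\sqrt{2E_{0}}\alpha)/h}g_{\pm}$, and then compose with Theorem \ref{ThRes} using the clean-intersection calculus. You also correctly identify the two real tasks: computing the canonical relations $C_{\pm}$ of these operators (Lemma \ref{a31} in the paper, which uses the eikonal equation to show that $-\partial_{\theta}\Phi_{+}(x,\sqrt{2E_{0}}\theta)$ is the impact parameter) and justifying the representation at the critical energy (which only needs the $\CO(|\log h|/h)$ resolvent bound; see Lemma \ref{l2.1RT}).

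There is, however, a concrete geometric error that breaks your order computation. The asymptotic map $\rho\mapsto(\Theta(\rho),Z(\rho))$ cannot be a local diffeomorphism from $\Lambda_{\pm}$ onto $\Lambda_{\pm}^{\infty}$: it is constant along each Hamiltonian trajectory, and indeed $\dim\Lambda_{\pm}=n$ while $\dim\Lambda_{\pm}^{\infty}=n-1$. Consequently the compositions $(\Lambda_{+}\times\Lambda_{-})\circ C_{-}$ and $C_{+}^{-1}\circ(\Lambda_{+}\times\Lambda_{-}^{\infty})$ are \emph{not} transverse; they are clean with excess $e=1$ (the fiber over a point of the composed relation is the one-parameter family of points along the trajectory), exactly as in the composition with $\Lambda_{\alpha,\chi}(E_{0})$ in the proof of Theorem \ref{ThRes}. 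Each excess contributes $+\tfrac12$ to the order. The correct bookkeeping is: $\mathcal{M}_{\pm}$ has order $-\tfrac{2n+3}{4}$, the resolvent factor has order $1-\tfrac{\sum_{j}\lambda_{j}}{2\lambda_{1}}$, the two clean compositions each add $\tfrac12$, and the prefactor $c_{1}\sim h^{-n}$ adds $n$, which yields $\tfrac12-\tfrac{\sum_{j}\lambda_{j}}{2\lambda_{1}}$. Your ``$-\tfrac14$ per application of $F_{0}$'' shortcut lands on the same number, but it is not what the composition theorem gives, and combined with your (false) transversality claim it would produce a different, incorrect order if carried out within the calculus.
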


For potentials $V$ with compact support, this result can be extended to the case $\omega = \theta$. In fact, for such potentials there exists a nice representation of the scattering matrix which is valid even for $\omega = \theta$ (see \cite[Equation (46)]{Al05_01}).

Notice that, near non-trapped trajectories, our proof here gives the following improvement of \cite[Main Theorem]{Al06_01} for what concerns the order. The order is here optimal as shown by the results of the paper \cite{RoTa89_01}. Of course, one can obtain analogous results concerning the resolvent or the spectral function (see \eqref{a51}).

\begin{Th}\sl \label{nontrappedcase}
Suppose \eqref{a43}, \eqref{potential}, $E_{0}>0$ and, for some $\alpha>1/2$ and some $N\in\R$,
\begin{equation*}
\Vert \CR ( E_{0}+i0 ) \Vert_{\CB(L^2_{\alpha}(\R^n),L^2_{-\alpha}(\R^n))}=\CO(h^N).
\end{equation*}
If $(\omega,z_{-})\in T^*\S^{n-1}$ is such that $\gamma_{-}(t,\omega,z_{-},E_{0})$ is non-trapped, then, microlocally near $(\theta(\omega,z_{-},E_{0}), \sqrt{2E_{0}}z_{+}(\omega,z_{-},E_{0}),\omega,\sqrt{2E_{0}}z_{-})$, provided $\omega\neq \theta(\omega,z_{-},E_{0})$ we have
\begin{equation*}
\CS (E_{0} ,h)\in \CI^{0}_{h} \big( \S^{n-1}\times \S^{n-1}, \CS\CR (E_{0})' \big).
\end{equation*}
\end{Th}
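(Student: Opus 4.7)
The plan is to mimic the strategy used for Theorem~\ref{Thsa} but to replace the singular microlocal analysis at the hyperbolic fixed point by standard semiclassical propagation of singularities along the non-trapped trajectory $\gamma_-(\cdot,\omega,z_-,E_0)$. The polynomial bound on $\CR(E_0+i0)$ substitutes for Assumption~{\it (A2)}: it provides the global control on the limiting absorption resolvent needed to justify the stationary-scattering formulas, while the non-trapping of the distinguished trajectory carries the whole microlocal information.

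First, I would represent $\CS(E_0,h)-\text{Id}$ by an Isozaki--Kitada / Robert--Tamura type formula, schematically
\[
\CS(E_0,h)-\text{Id} = -2i\pi\, F_0(E_0,h)\,[V,J_+]^{*}\,\CR(E_0+i0)\,[V,J_-]\,F_0(E_0,h)^{*},
\]
modulo $\CO(h^\infty)$, where $J_\pm$ are Isozaki--Kitada modifiers quantizing the free incoming/outgoing dynamics. The commutators $[V,J_\pm]$ are microlocally supported on bounded regions of phase space, and, provided $\omega\neq\theta(\omega,z_-,E_0)$, they localize on disjoint outgoing and incoming cones, so that $\CR(E_0+i0)$ gets applied to wave packets microlocalized on the incoming tail of $\gamma_-$ and tested against packets microlocalized on its outgoing tail.

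Second, invoking the polynomial resolvent bound together with the non-trapping of $\gamma_-$, semiclassical propagation of singularities (in the spirit of \cite{Al06_01}) realises $\CR(E_0+i0)$, microlocally along the tube $\{\gamma_-(t,\omega,z_-,E_0);\,t\in\R\}$, as a semiclassical Fourier integral operator of order $-1$ associated to the forward Hamilton relation $\bigcup_{t\geq 0}\graph\exp(tH_p)|_{p^{-1}(E_0)}$. Composing this with the free $h$-FIOs $J_\pm$ and then with $F_0(E_0,h)$ from the left and right, and using that $(\alpha,z)\mapsto\gamma_\pm(0,\alpha,z,E_0)$ is a symplectic diffeomorphism, the hypothesis $\omega\neq\theta$ ensures a clean transverse composition; the resulting canonical relation on $T^{*}(\S^{n-1}\times\S^{n-1})$ is exactly the scattering relation $\CS\CR(E_0)'$ of \eqref{a52}.

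The main obstacle, and the reason the order in \cite{Al06_01} was not optimal, is tracking the semiclassical order cleanly through the three successive compositions. One must check that the half-densities and the dimension counts in each FIO composition conspire precisely to yield the order $0$: roughly, $F_0(E_0,h)$ contributes $\tfrac{n-1}{4}$ as a Fourier-type operator between spaces of different dimensions, the commutators $[V,J_\pm]$ are of order $0$, the resolvent is of order $-1$, and the reduction from $T^{*}\R^n$ to $T^{*}\S^{n-1}$ through the asymptotic momentum map introduces the compensating dimension factors. The Jacobian $\widehat\sigma$ appearing in \eqref{vexpansion} is the classical quantity that governs the principal symbol of the resulting FIO and confirms that the order $0$ stated is sharp, in line with the leading asymptotics of Robert--Tamura.
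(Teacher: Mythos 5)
Your route is the same as the paper's: reduce $\CS(E_{0},h)$, away from the diagonal, to $c_{1}\,{\mathcal M}_{+}^{*}\CR(E_{0}+i0){\mathcal M}_{-}$ via the Isozaki--Kitada/Robert--Tamura representation (the polynomial resolvent bound is indeed what replaces {\it (A2)}, both in Lemma \ref{l2.1RT} and in showing that the microlocalized resolvent along the non-trapped trajectory equals $\frac{i}{h}\int\chi(t)e^{-it(P-E)/h}\,dt$ for a compactly supported $\chi$, modulo $\CO(h^{\infty})$), and then compose the canonical relations, using $C_{+}^{-1}\circ\Lambda(E_{0})\circ C_{-}\subset\CS\CR(E_{0})$. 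The hypothesis $\omega\neq\theta$ is used exactly where you put it, to discard the terms $T_{\pm1}$.

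The genuine gap is the order computation, which you label ``the main obstacle'' and then do not carry out. Since the canonical relation was already identified in \cite{Al06_01}, the sharp order $0$ is the entire content of the theorem, so this cannot be left as a ``check''; moreover the partial tally you give is inconsistent with the calculus actually used. In the normalization of Definition \ref{a44}, the microlocalized resolvent $\Op(\alpha_{+})\CR(E_{0}+i0)\Op(\alpha_{-})$ is an $h$-FIO of order $1/2$ associated to $\Lambda(E_{0})'$ (Lemma \ref{lem2} and \eqref{a51}), not of order $-1$; the operators ${\mathcal M}_{\pm}$ have order $-\frac{2n+3}{4}$ (see \eqref{a27}); each of the two compositions is clean with excess $e=1$ (the flow direction along $\gamma_{-}$), contributing $+\frac12$ each by Theorem \ref{composeFIO}; and your list omits the prefactor $c_{1}\sim h^{-n}$ from \eqref{Gu76_01}, without which nothing closes. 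The sum is $\frac12-2\cdot\frac{2n+3}{4}+2\cdot\frac12=-n$, and $c_{1}$ raises the order by $n$, giving exactly $0$. Whatever bookkeeping convention you prefer (e.g.\ keeping $F_{0}$ as an explicit operator between spaces of different dimensions rather than absorbing its $h$-powers into $c_{1}$), these contributions and the two excesses must be exhibited explicitly. A minor further caveat: the Jacobian $\widehat{\sigma}$ of \eqref{vexpansion} only ``confirms sharpness'' under the non-degeneracy hypothesis that this theorem is precisely designed to avoid; it plays no role in the proof itself.
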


\begin{figure}\label{newscatrel}
\begin{center}
\begin{picture}(0,0)%
\includegraphics{sabtd3.ps}%
\end{picture}%
\setlength{\unitlength}{1184sp}%
\begingroup\makeatletter\ifx\SetFigFont\undefined%
\gdef\SetFigFont#1#2#3#4#5{%
  \reset@font\fontsize{#1}{#2pt}%
  \fontfamily{#3}\fontseries{#4}\fontshape{#5}%
  \selectfont}%
\fi\endgroup%
\begin{picture}(15119,11519)(-4296,-10733)
\put(9901,-7486){\makebox(0,0)[lb]{\smash{{\SetFigFont{11}{13.2}{\rmdefault}{\mddefault}{\updefault}$z_{+}$}}}}
\put(8926,-9661){\makebox(0,0)[lb]{\smash{{\SetFigFont{11}{13.2}{\rmdefault}{\mddefault}{\updefault}$\theta$}}}}
\put(-4199,-1036){\makebox(0,0)[lb]{\smash{{\SetFigFont{11}{13.2}{\rmdefault}{\mddefault}{\updefault}$\omega$}}}}
\put(-2474,-1411){\makebox(0,0)[lb]{\smash{{\SetFigFont{11}{13.2}{\rmdefault}{\mddefault}{\updefault}$\gamma_{-} ( t, z_{-} , \omega , E_{0} )$}}}}
\put(-1049,-4486){\makebox(0,0)[lb]{\smash{{\SetFigFont{11}{13.2}{\rmdefault}{\mddefault}{\updefault}$z_{-}$}}}}
\put(4951,-3586){\makebox(0,0)[lb]{\smash{{\SetFigFont{9}{10.8}{\rmdefault}{\mddefault}{\updefault}$-\omega$}}}}
\put(5326,-4786){\makebox(0,0)[lb]{\smash{{\SetFigFont{9}{10.8}{\rmdefault}{\mddefault}{\updefault}$\theta$}}}}
\put(5026, 89){\makebox(0,0)[lb]{\smash{{\SetFigFont{11}{13.2}{\rmdefault}{\mddefault}{\updefault}$\omega^{\perp}$}}}}
\put(751,-7711){\makebox(0,0)[lb]{\smash{{\SetFigFont{11}{13.2}{\rmdefault}{\mddefault}{\updefault}$\theta^{\perp}$}}}}
\put(5401,-8086){\makebox(0,0)[lb]{\smash{{\SetFigFont{11}{13.2}{\rmdefault}{\mddefault}{\updefault}$\gamma_{+} ( t, z_{+} , \theta , E_{0} )$}}}}
\end{picture}%
\end{center}
\caption{The scattering relation
$\Lambda^{\infty}_{+}\times\Lambda^{\infty}_{-}$ consists of the 
points $(\theta, -\sqrt{2E_{0}}z_+, \omega, -\sqrt{2E_{0}}z_-)$ related as in this figure.}
\end{figure}

This paper is organized as follows.
We prove Theorem \ref{ThRes} in Section \ref{Sres}, and in Section
\ref{Smspf}, we give the microlocal representations of the resolvent 
and
the spectral function implied by Theorem \ref{ThRes}.
In Section \ref{Sthsa} we prove Theorem \ref{Thsa} using the
representation of the scattering amplitude presented in Section 
\ref{Srepr}. We sketch the proof of Theorem \ref{nontrappedcase} in Section \ref{secntc}.
We use Theorem \ref{Thsa} to deduce an oscillatory integral
representation and an integral representation of the scattering 
amplitude in Section \ref{Smsa}.
Lastly, in Appendix \ref{scanal} we review the notions from
semi-classical analysis most relevant to this work.

\section{The resolvent as a semi-classical Fourier integral operator}

\Subsection{Proof of Theorem \ref{ThRes}}\label{Sres}

We shall prove that $\CR(E+i0)\in\mathcal{I}_{h}^{1-\frac{\sum_{j=1}^{n}\lambda_j}{2\lambda_{1}}} \big( \R^{n} \times \R^{n} , \Lambda_{+}\times\Lambda_{-} {}'\big)$ microlocally near $\left(\rho_{+}, \rho_{-}\right) \in \widetilde{\Lambda_{+} \times \Lambda_{-}}$.
The proof in the case of the incoming resolvent $\CR(E-i0)$ is analogous, and we omit it. The resolvent estimate 
from \cite[Theorem 2.1]{AlBoRa07_01} 
\begin{equation}\label{resest}
\Vert \CR(E\pm i 0)\Vert_{\mathcal{B} (L_{\alpha}^{2} , L_{-\alpha}^{2} )} = \mathcal{O} \left(\frac{\left|\log 
h\right|}{h}\right), \text{ for } \alpha>\frac12,
\end{equation}
and 
\cite[Lemma 1]{Al06_01} give that $K_{\CR(E\pm i0)} \in \CS_{h}' (\mathbb{R}^{2n} )$.
Let $\alpha^{\pm}\in C^{\infty}_{0} (T^*\mathbb{R}^n)$ be supported near $\rho_{\pm}$.
We consider
\begin{equation*}
\mathcal{I}= \Op (\alpha^{+}) \CR (E + i 0) \Op (\alpha^{-}).
\end{equation*}

\begin{Prop}\sl \label{resFIO}
There exist $T_{1}>0$ and $\chi \in C^{\infty}_{0} (]0,+\infty[)$ such that
\begin{equation}\label{a6}
\mathcal{I}= e^{-iT_{1}(P-E)/h} \Op (\alpha^+_{T_{1}}) \mathcal{J}(E) \left(\frac{i}{h} \int \chi(t) e^{-it(P-E)/h} dt\right) e^{iT_{1}(P-E)/h} \Op (\alpha^{-}) + R ,
\end{equation}
where $\Vert R \Vert_{{\mathcal B} ( L^{2} , L^{2} )} = {\mathcal O} (h^{\infty})$, the symbol $\alpha^+_{T_{1}} \in S (\langle x\rangle^{-\infty}\langle \xi\rangle^{-\infty})$ is given by
\begin{equation*}
\Op ( \alpha^+_{T_{1}} ) = e^{iT_{1}(P-E)/h} \Op (\alpha^+) e^{-iT_{1}(P-E)/h} ,
\end{equation*}
and $\mathcal{J}(E)\in\mathcal{I}_{h}^{-\frac{\sum_{j=1}^{n}\lambda_{j}}{2\lambda_1}} \big( \mathbb{R}^{n} \times \R^{n} , \Lambda_+ \times \Lambda_- {}' \big)$ is given by \cite[Theorem 2.6]{BoFuRaZe07_01} and \cite[Remark 2.7]{BoFuRaZe07_01}.
\end{Prop}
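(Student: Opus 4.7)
The plan combines three ingredients: the limiting absorption representation of $\mathcal{R}(E+i0)$ as a time integral of the Schr\"odinger propagator; Egorov's theorem to conjugate the microlocal cutoffs $\Op(\alpha^\pm)$ so as to transport their microsupports into a small neighborhood of the fixed point $(0,0)$; and the microlocal resolvent $\mathcal{J}(E)$ constructed in \cite{BoFuRaZe07_01}, which already provides the $h$-FIO structure near the fixed point and whose composition with a compactly supported time integral of the propagator accounts for the passage through the hyperbolic trap.

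First, by the very definition $\Op(\alpha^+_{T_1})=e^{iT_1(P-E)/h}\Op(\alpha^+)e^{-iT_1(P-E)/h}$ and the fact that $\mathcal{R}(E+i0)$ commutes with $e^{iT_1(P-E)/h}$ (both being functions of $P$), I would rewrite
\begin{equation*}
\mathcal{I} = e^{-iT_1(P-E)/h}\,\Op(\alpha^+_{T_1})\,\mathcal{R}(E+i0)\,e^{iT_1(P-E)/h}\,\Op(\alpha^-).
\end{equation*}
By Egorov, $\alpha^+_{T_1}$ equals $\alpha^+\circ\exp(T_1 H_p)$ at leading order, so that its microsupport sits near $\exp(-T_1 H_p)(\rho_+)$; since $\rho_+\in\Lambda_+$, this point tends to $(0,0)$ as $T_1\to+\infty$. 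Choosing $T_1$ sufficiently large therefore places the support of $\Op(\alpha^+_{T_1})$ inside an arbitrarily small neighborhood $U$ of the fixed point, where the parametrix $\mathcal{J}(E)$ of \cite{BoFuRaZe07_01} is valid.

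Second, I would analyze the middle factor $\mathcal{R}(E+i0)\,e^{iT_1(P-E)/h}$ by inserting the limiting absorption representation
\begin{equation*}
\mathcal{R}(E+i0)=\frac{i}{h}\int_0^{+\infty} e^{-it(P-E)/h}\,dt,
\end{equation*}
which is legitimate as a distributional identity by \eqref{resest} and \cite[Lemma 1]{Al06_01}. A smooth partition of unity in the time variable then splits the resulting integral into a piece supported on those times for which $e^{-it(P-E)/h}e^{iT_1(P-E)/h}\Op(\alpha^-)$ is already microlocalized inside $U$ --- this piece is precisely the factor $(i/h)\int\chi(t)e^{-it(P-E)/h}\,dt$ with $\chi\in C_0^\infty(]0,+\infty[)$ appearing in the statement --- plus complementary pieces. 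On each complementary piece, (A2) together with the choice of $T_1$ guarantees that the bicharacteristic segments carrying the microsupport do not meet both $\Op(\alpha^+_{T_1})$ and $\Op(\alpha^-)$ simultaneously, so standard non-overlap arguments absorb them into the $\CO(h^\infty)$ remainder $R$. In the central region $U\times U$, the identification of the outgoing resolvent with $\mathcal{J}(E)$ composed with this compactly supported time integral is exactly the content of \cite[Theorem 2.6]{BoFuRaZe07_01} together with \cite[Remark 2.7]{BoFuRaZe07_01}; plugging it between the conjugated cutoffs produces the claimed identity.

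The main obstacle is the microsupport bookkeeping of the second step: one must verify that every regime in the time integral not captured by the factor $(i/h)\int\chi(t)e^{-it(P-E)/h}\,dt$ is annihilated by the sandwich $\Op(\alpha^+_{T_1})\,\cdots\,\Op(\alpha^-)$. This relies decisively on assumption (A2), since it is exactly the absence of trapping at energy $E_0$ outside $(0,0)$ that forces every bicharacteristic not passing through $U$ to escape to infinity, and therefore to be microlocally separated from at least one of the two outer cutoffs, yielding the $\CO(h^\infty)$ contribution.
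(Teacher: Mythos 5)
Your overall architecture is the right one and matches the paper's: conjugate by $e^{\pm iT_{1}(P-E)/h}$, use Egorov to move the microsupport of $\Op(\alpha^{+}_{T_{1}})$ close to the fixed point, and let the operator $\mathcal{J}(E)$ of \cite{BoFuRaZe07_01} account for the passage through the hyperbolic point. The genuine gap is in your treatment of the time integral. First, the identity $\CR(E+i0)=\frac{i}{h}\int_{0}^{+\infty}e^{-it(P-E)/h}\,dt$ is not justified by \eqref{resest} and \cite[Lemma 1]{Al06_01}: at the trapping energy $E_{0}$ the large-time tail of this integral is exactly the delicate object, and the paper never writes it. More seriously, your disposal of the ``complementary pieces'' by non-overlap of the outer cutoffs cannot work. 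The set of times $t$ for which $e^{-it(P-E)/h}e^{iT_{1}(P-E)/h}\Op(\alpha^{-})$ is microlocalized near the fixed point is \emph{unbounded}: points of $\supp\alpha^{-}$ on $\Lambda_{-}$ converge to $(0,0)$ as $t\to+\infty$, and nearby points linger near $(0,0)$ for arbitrarily long times before emerging along $\Lambda_{+}$ --- precisely into the microsupport of $\Op(\alpha^{+}_{T_{1}})$. So no compactly supported $\chi$ captures ``all times with flow-out in $U$'', and the large-time contributions concentrate exactly on $\Lambda_{+}\times\Lambda_{-}$ near $(\rho_{+},\rho_{-})$; they are the content of $\mathcal{J}(E)$, not an $\CO(h^{\infty})$ remainder. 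As written, your decomposition would either force $\chi$ to be non-compactly supported or make $\mathcal{J}(E)$ appear out of nowhere.

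What the paper does instead is to use the exact finite-time identity $\CR(E+i0)=\frac{i}{h}\int_{0}^{T}e^{-it(P-E)/h}\,dt+e^{-iT(P-E)/h}\CR(E+i0)$, applied to $\mathcal{K}=\CR(E+i0)e^{iT_{1}(P-E)/h}\Op(\alpha^{-})$, with the sole purpose of computing the \emph{incoming Cauchy data} of $\mathcal{K}$ on a section $\mathcal{S}=\{(x,\xi)\in\Lambda_{-};\,\vert x\vert=\varepsilon\}$. The remainder $e^{-iT(P-E)/h}\mathcal{K}$ is shown to vanish microlocally in a neighborhood $U$ of $\mathcal{S}$ not by non-overlap of cutoffs but by the outgoing character of $\CR(E+i0)$ (it is microlocally zero in an incoming region at infinity, as in \cite[Section 5]{AlBoRa07_01}) combined with propagation of singularities along the backward flow, which by the choice of $T$ has carried $U$ out of $B(0,R)$. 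Once the data on $\mathcal{S}$ is identified with $\frac{i}{h}\int_{0}^{T}e^{-it(P-E)/h}e^{iT_{1}(P-E)/h}\Op(\alpha^{-})\,dt$, the microlocal Cauchy problem result of \cite{BoFuRaZe07_01} produces $\mathcal{K}=\mathcal{J}(E)\bigl(\frac{i}{h}\int_{0}^{T}\cdots\bigr)$ near $\rho_{1}=\exp(-T_{1}H_{p})(\rho_{+})$; the cutoff $\chi$ only smooths the endpoints of $[0,T]$, the endpoint contributions being $\CO(h^{\infty})$ in $U$ by Egorov. You would need to replace your partition-of-unity step by this finite-time/propagation argument (or an equivalent control of the outgoing remainder) for the proof to close.
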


It is possible show a better estimate for the remainder term $R$. In fact, we have
\begin{equation*}
\Vert \< x, h D \>^{N} R \< x, h D \>^{N} \Vert_{{\mathcal B} ( L^{2} , L^{2} )} = {\mathcal O} (h^{\infty}) ,
\end{equation*}
for any $N \in \R$.

\begin{proof}   Since $(\rho_{+},\rho_{-})\in \widetilde 
{\Lambda_{+}\times  \Lambda_{-}}$,  one can find $T_{1}>0$ 
such that $\rho_{1}=\exp (-T_{1}H_{p})(\rho_{+})$ belongs to 
$\Lambda_{+}\setminus \widetilde{\Lambda_{+}} (\rho_{-})$ and  
is as close as needed to $(0,0)$.
We have
\begin{align}
\nonumber
\Op (\alpha^{+}) \CR(E+i0) \Op (\alpha^{-})=& e^{-iT_{1}(P-E)/h} 
\Op (\alpha^+_{T_{1}}) e^{iT_{1}(P-E)/h}\CR(E+i0) \Op (\alpha^{-}) \\
=& e^{-iT_{1}(P-E)/h} \Op (\alpha^+_{T_{1}}) \CR(E+i0) e^{iT_{1}(P-E)/h} \Op (\alpha^{-}) .
\label{bzbzb}
\end{align}

We denote $\mathcal{K}=\CR(E+i0)e^{iT_{1}(P-E)/h}\Op(\alpha^{-})$. First 
we observe 
that
\begin{equation*}
(P-E)\mathcal{K} = e^{iT_{1}(P-E)/h} \Op(\alpha^{-}) = 0 \text{ microlocally 
near }(0,0),
\end{equation*}
and we want to apply the results of \cite{BoFuRaZe07_01} in order to compute $\mathcal{K}$ microlocally near $(0,0)$.
Here, and in that follows, we say that an operator $A$ is microlocally $0$ near $V \subset T^{*} \R^{n}$ (respectively $\rho \in T^{*} \R^{n}$) when there exists $\beta \in S (1)$ with $\beta =1$ in a neighborhood of $V$ (respectively $\rho$) such that
\begin{equation*}
\Vert \Op (\beta ) A \Vert_{{\mathcal B} (L^{2} , L^{2})} = {\mathcal O} (h^{\infty}).
\end{equation*}
To that end, we need to know $\mathcal{K}$  microlocally near $\mathcal{S}=\{(x,\xi)\in\Lambda_{-}; \ \vert x\vert=\varepsilon\}$ for some given $\varepsilon>0$ small enough.

We choose $R>0$ such that $e^{iT_{1}(P-E)/h} \Op(\alpha^{-})$ is microlocally 0 out of $B(0,R)$.
One can easily see that there exist $T>0$ and a neighborhood $U$ of $\mathcal{S}$ in $T^*\mathbb{R}^n$, such that
\begin{equation*}
\forall \rho\in U , \ \forall t\geq T, \quad \exp(-tH_{p})(\rho)\notin B(0,R) \times \R^{n} .
\end{equation*}
Now we have
\begin{equation*}
\mathcal{K} = \frac{i}{h} \int_{0}^T e^{-it(P-E)/h} e^{iT_{1}(P-E)/h} \Op (\alpha^{-}) \, d t + e^{-iT(P-E)/h} \mathcal{K},
\end{equation*}
and we claim that the second term of the right hand side vanishes microlocally in $U$.
Indeed, as in \cite[Section 5]{AlBoRa07_01}, one can show that $e^{iT(P-E)/h}\mathcal{K}$ is microlocally $0$ in some incoming region $\Gamma_{-}(R_{0},\sigma,d)$, where we use the standard notation
\begin{equation}\label{gammainout}
\Gamma_{\pm} (R,d, \sigma ) = \big\{ (x, \xi ) \in \R^{n} \times \R^{n} ; \ \vert x \vert > R , \ d^{-1} < \vert \xi \vert < d, \ \pm \cos (x, \xi ) > \pm \sigma \big\} ,
\end{equation}
for  incoming and outgoing regions. Moreover we have
\begin{equation*}
(P-E)e^{- i T(P-E)/h}\mathcal{K}=e^{-i T (P-E)/h}e^{iT_{1}(P-E)/h}\Op(\alpha^{-}) =0 ,
\end{equation*}
microlocally in $\cup_{t\geq 0}\exp( -tH_{p}) U$, and the claim follows by a 
usual propagation of singularity argument.

Thus we have, with the notation of \cite[Section 2]{BoFuRaZe07_01}, microlocally 
near $\rho_{1}$,
\begin{equation*}
\CR(E+i0)e^{iT_{1}(P-E)/h} \Op(\alpha^{-})=\mathcal{J}(E) \left(   
\frac{i}{h}\int_{0}^T e^{-it(P-E)/h} \  dt
\right)e^{iT_{1}(P-E)/h} \Op(\alpha_{-}).
\end{equation*}
Finally, we notice that there exists $\delta>0$ such that, for any $\chi \in C^{\infty}_{0}(]0,T[)$ with $\chi=1$ on $[\delta,T-\delta]$, we have, microlocally near $\rho_{1}$,
\begin{equation*}
\CR(E+i0)e^{iT_{1}(P-E)/h} \Op(\alpha^{-})=\mathcal{J}(E) \left(   
\frac{i}{h}\int \chi(t) e^{-it(P-E)/h}   \  dt \right)
e^{iT_{1}(P-E)/h} \Op(\alpha_{-}).
\end{equation*}
Indeed, by Egorov's theorem,  $e^{-it(P-E)/h} e^{iT_{1}(P-E)/h} \Op(\alpha^{-})$ is microlocally $0$ in $U$ 
for $t<\delta$ and $t>T-\delta$, provided $\delta$ is small enough.
The proposition then follows directly from (\ref{bzbzb}) with a remainder term $R = {\mathcal O} (h^{\infty})$ in ${\mathcal B} (L^{2} , L^{2})$.
\end{proof}

Now it remains to show that all operators above compose as $h$-FIOs. We shall use several lemmas and we begin with the usual approximation of the quantum propagator.

\begin{Lem}\sl  \label{lem1}
For any $t\in \R$, $e^{-it(P-E)/h}$ is a $h$-FIO of order 0 associated to the canonical relation
\begin{equation*}
\Lambda_{t}=\{ (x,\xi,y,\eta)\in T^*\R^n\times  T^*\R^n; \  (x,\xi)=\exp(tH_{p})(y,\eta)\},
\end{equation*}
uniformly for $t$ in a compact.
\end{Lem}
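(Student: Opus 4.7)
The plan is to establish this classical result by constructing a WKB-type parametrix for small times and then extending to compact time intervals via the group property of the propagator. Since $p(x,\xi) = \tfrac{1}{2}|\xi|^{2} + V(x)$ has a symbol in an admissible class for semiclassical FIO theory (the principal symbol is real, of real principal type away from the zero section, and $V$ is smooth with controlled decay), standard Egorov/WKB technology applies.

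First, for $|t| < \tau_{0}$ small enough that the Hamilton--Jacobi equation
\begin{equation*}
\partial_{t} S(t,x,\eta) + p\bigl(x,\partial_{x} S(t,x,\eta)\bigr) = E, \qquad S(0,x,\eta) = \langle x,\eta\rangle,
\end{equation*}
has a smooth solution defined on a fixed open set of $(x,\eta)$, I would define an approximation
\begin{equation*}
U(t) f (x) = (2\pi h)^{-n} \int\!\!\int e^{i(S(t,x,\eta) - \langle y,\eta\rangle)/h} a(t,x,\eta;h) f(y)\, dy\, d\eta,
\end{equation*}
with amplitude $a \sim \sum_{j \geq 0} h^{j} a_{j}$, where the $a_{j}$ are obtained successively from the transport equations deduced from $(ih\partial_{t} + P - E) U(t) = \mathcal{O}(h^{\infty})$ acting on the Schwartz class, with $a_{0}(0,x,\eta)=1$ and $a_{j}(0,\cdot)=0$ for $j\geq 1$. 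Borel summation yields a genuine symbol $a$. The phase $S$ is a non-degenerate generating function for the graph of $\exp(tH_{p})$ for $|t|<\tau_{0}$, so $U(t) \in \mathcal{I}_{h}^{0}(\R^{n}\times\R^{n}, \Lambda_{t}{}')$. A standard $L^{2}$ energy estimate applied to the remainder $(ih\partial_{t} + P - E)(e^{-it(P-E)/h} - U(t))$, together with the vanishing at $t=0$, then gives $e^{-it(P-E)/h} = U(t)$ modulo $\mathcal{O}(h^{\infty})$ in $\mathcal{B}(L^{2},L^{2})$, uniformly for $|t| \leq \tau_{0}/2$.

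For arbitrary $t$ in a compact interval $[-T,T]$, I would write, for $N$ large enough that $T/N < \tau_{0}/2$,
\begin{equation*}
e^{-it(P-E)/h} = \bigl( e^{-i(t/N)(P-E)/h} \bigr)^{N},
\end{equation*}
and invoke the composition theorem for semiclassical Fourier integral operators: the composition of $N$ operators in $\mathcal{I}_{h}^{0}$ associated respectively to $\Lambda_{t/N}$ is an operator in $\mathcal{I}_{h}^{0}$ associated to $\Lambda_{t/N} \circ \cdots \circ \Lambda_{t/N} = \Lambda_{t}$, the orders adding to $0$. This bypasses the need for a single global generating function of $\Lambda_{t}$, which is the main technical difficulty because the projection $\Lambda_{t} \to \R^{n}_{x} \times \R^{n}_{\eta}$ may become singular (caustics) as $|t|$ grows.

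The main obstacle is indeed handling caustics and the fact that $\Lambda_{t}$ need not be projectable onto any fixed pair of variables for large $t$; writing the propagator as an $N$-fold composition of short-time pieces, each represented by a global generating function of the form $S(t/N,x,\eta) - \langle y,\eta\rangle$, resolves this uniformly in $t \in [-T,T]$. Uniformity in $t$ on compacta then comes from the smoothness of $S$ and $a$ in $t$, and from the fact that the composition estimates in the FIO calculus are continuous in the defining data.
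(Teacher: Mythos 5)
Your proposal is correct and follows essentially the same route as the paper: a short-time WKB parametrix with phase solving the eikonal (Hamilton--Jacobi) equation generating $\Lambda_{t}$, followed by writing $e^{-it(P-E)/h}$ as an $N$-fold product of short-time propagators and invoking the composition theorem for $h$-FIOs. The only difference is that you spell out the parametrix construction, which the paper delegates to Robert's book (Proposition IV-30) and Evans--Zworski.
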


\begin{proof}
For $t$ small enough, it is well-known that one can write the kernel $K$ of the operator $e^{-it(P-E)/h}$ as
\begin{equation*}
K=\frac{1}{(2\pi h)^n} \int_{\R^n} e^{-i(\varphi(t,x,\theta)-y\cdot \theta +t E)/h} a(t,x,\theta ;h) \, d \theta,
\end{equation*}
modulo an operator ${\mathcal O} (h^{\infty})$ in ${\mathcal B}(L^{2} , L^{2})$ uniformly for $t$ in a compact.
See {\it e.g.} Proposition IV-30 in Robert's book \cite{Ro87_01} or Theorem 10.9 in the book of Evans and Zworski \cite{EvZw07_01}. Here $\varphi$ is a non-degenerate phase function, which  satisfies the eikonal equation
\begin{equation}\label{eiko}
\varphi'_{t}+p(x,\varphi'_{x})=0 ,
\end{equation}
and (see Proposition IV-14 {\it i)} of \cite{Ro87_01})
\begin{equation}   \label{a3}
( x, \varphi'_{x} ) = \exp (t H_{p} ) ( \varphi'_{\theta} , \theta ) .
\end{equation}
This gives the lemma for $t$ small enough. For other values of $t$, Robert uses the following trick. For some $k\in \N$ large enough, one can write
\begin{equation*}
e^{-it(P-E)/h}=\prod_{j=1}^k e^{-it(P-E)/k h}.
\end{equation*}
It is then easy to see that these operators compose as $h$-FIOs, and that the result is associated to $\Lambda_{t}$ and of order 0.
\end{proof}

\begin{Lem}\sl \label{lem2}
Let $\alpha \in C^{\infty}_{0} (T^{*} \R^{n})$ be such that $H_{p}(x, \xi ) \neq 0$ for all $(x, \xi ) \in \supp \alpha \cap p^{-1} (E_{0})$. There exists $\delta>0$ such that, for any $\chi\in C^{\infty}_{0} (]0,\delta [)$, the operator ${\mathcal L} : L^2(\R^n)\to L^2(\R^n)$ defined by
\begin{equation*}
{\mathcal L} =\frac{i}{h}\int \chi (t) e^{-it(P-E)/h} d t \Op ( \alpha ) ,
\end{equation*}
is a $h$-FIO with compactly supported symbol of order $1/2$ associated to the canonical relation $\Lambda_{\alpha, \chi} (E_{0})$ given by
\begin{align*}
\Lambda_{\alpha, \chi} (E_{0}) = \big\{ (x, \xi , y , \eta ) \in T^*\R^n\times T^* & \R^n ; \ p (y , \eta ) =E_{0} , \ (y, \eta ) \in \supp ( \alpha ) +  B ( 0 , \varepsilon ), \\
&\text{and } \exists t \in \supp \chi + ] - \varepsilon , \varepsilon [ ,\  (x,\xi)=\exp (t H_{p})(y,\eta) \big\} ,
\end{align*}
for any $\varepsilon >0$.
\end{Lem}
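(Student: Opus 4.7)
The plan is to realize $\mathcal{L}$ as an oscillatory integral with a non-degenerate phase function whose critical set projects onto $\Lambda_{\alpha,\chi}(E_{0})$. The hypothesis $H_{p} \neq 0$ on $\supp \alpha \cap p^{-1}(E_{0})$ enters precisely to guarantee this non-degeneracy, and the restriction $\supp \chi \subset \, ]0, \delta[$ is essential so that the phase does not degenerate at $t = 0$, where $e^{-it(P-E)/h}$ is the identity.

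First, I would fix $\delta > 0$ small enough that Lemma \ref{lem1} provides a single global phase $\varphi(t,x,\theta)$ for $0 < t < \delta$. Composing the oscillatory-integral representation of $e^{-it(P-E)/h}$ from Lemma \ref{lem1} with $\Op(\alpha)$, and absorbing the symbol of $\alpha$ into the amplitude by a standard pseudodifferential composition (equivalently, a stationary-phase step in the intermediate $(z,\theta)$ variables), I obtain, modulo $\mathcal{O}(h^{\infty})$,
\begin{equation*}
e^{-it(P-E)/h} \Op(\alpha) = \frac{1}{(2\pi h)^{n}} \int e^{i(\varphi(t,x,\eta) - y\cdot\eta + tE)/h} b(t,x,\eta;h) \, d\eta,
\end{equation*}
where $b$ is a classical semiclassical symbol with $\eta$-support in a small enlargement of the $\eta$-support of $\alpha$. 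Multiplying by $i\chi(t)/h$ and integrating in $t$ then gives
\begin{equation*}
K_{\mathcal{L}}(x,y) = \frac{i}{h(2\pi h)^{n}} \int \chi(t) e^{i\Phi(t,x,y,\eta)/h} b(t,x,\eta;h) \, dt \, d\eta + \mathcal{O}(h^{\infty}),
\end{equation*}
with phase $\Phi(t,x,y,\eta) = \varphi(t,x,\eta) - y\cdot\eta + tE$ and phase variables $(t,\eta) \in \R^{1+n}$.

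Next, I would identify the critical set $C_{\Phi} = \{d_{t,\eta}\Phi = 0\}$. The equation $\Phi'_{\eta} = 0$ forces $y = \varphi'_{\eta}$, and using the eikonal equation \eqref{eiko} the equation $\Phi'_{t} = 0$ becomes $p(x,\varphi'_{x}) = E$, i.e. $p(y,\eta) = E$ by conservation of $p$ along the flow. By the flow identity \eqref{a3}, the map sending $C_{\Phi}$ into $T^{*}\R^{n} \times T^{*}\R^{n}$ via $(t,x,y,\eta) \mapsto (x, \Phi'_{x}, y, -\Phi'_{y}) = (x, \varphi'_{x}, y, \eta)$ has image precisely $\Lambda_{\alpha,\chi}(E_{0})$ (the $\varepsilon$-enlargement in the definition being exactly that implicit in the $\eta$-support of $b$ and the $t$-support of $\chi$).

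The main obstacle is verifying that $\Phi$ is a non-degenerate phase function, i.e. that $d\Phi'_{t}, d\Phi'_{\eta_{1}}, \dots, d\Phi'_{\eta_{n}}$ are linearly independent on $C_{\Phi}$. Each $d\Phi'_{\eta_{j}}$ carries an independent $-dy_{j}$ component, so the issue reduces to showing that $d\Phi'_{t}$, which has no $dy$-component, is nonzero modulo their span. Differentiating the eikonal equation \eqref{eiko} in $t$ and in $\eta$, one sees that the transversality of $d\Phi'_{t}$ is controlled by the Hamilton vector field $H_{p}$ at the endpoint $(x,\varphi'_{x})$ of the trajectory; by \eqref{a3} and the conservation of $p$ this reduces to $H_{p}(y,\eta) \neq 0$, which for $\delta$ small enough is guaranteed by our hypothesis on $\alpha$. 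Finally, a standard order count: the prefactor $i h^{-1} (2\pi h)^{-n}$, together with $n+1$ phase variables and base dimension $n$, yields in the normalization of $\mathcal{I}_{h}^{\bullet}$ an $h$-FIO of order $1/2$; the compact support of the symbol is inherited from $\chi$ and $\alpha$.
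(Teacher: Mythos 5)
Your proposal is correct and follows essentially the same route as the paper: represent $e^{-it(P-E)/h}\Op(\alpha)$ via the WKB phase of Lemma \ref{lem1}, take $(t,\eta)$ as phase variables, identify the critical set through the eikonal equation and \eqref{a3}, and reduce non-degeneracy of $d\Phi'_t$ modulo the span of the $d\Phi'_{\eta_j}$ to the hypothesis $H_p(y,\eta)\neq 0$ on $\supp\alpha\cap p^{-1}(E_0)$, with the same order count. The only cosmetic difference is that the paper writes the phase with $tE_0$ and absorbs $e^{itE_1}$ into the amplitude so that the phase (hence the Lagrangian) is $h$-independent and the critical equation reads $p(y,\eta)=E_0$ rather than $p(y,\eta)=E$; you should do the same.
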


\begin{remark}\sl \label{a13}
Note that $\Lambda_{\alpha, \chi} (E_{0})$ is not a closed Lagragian submanifold. Nevertheless, there is no point here  since the support of the symbol of the $h$-FIO does not reach the boundary of $\Lambda_{\alpha, \chi} (E_{0})$ for any $\varepsilon >0$. In particular, the parameter $\varepsilon$ plays no role.
It would be natural to write that the canonical relation of this $h$-FIO is $\Lambda (E_{0})$ given by
\begin{equation*}
\Lambda (E_{0}) = \{ (x,\xi,y,\eta)\in T^*\R^n\times T^*\R^n ; \  p(x,\xi) = E_{0} , \  \exists t \in \R, \  (x,\xi)=\exp (t H_{p})(y,\eta) \} .
\end{equation*}
However, since the Hamiltonian flow vanish at $(0,0)$, $\Lambda (E_{0} )$ is not a manifold. Of course, in the non trapping case, there is not such difficulty and $\Lambda_{\alpha, \chi} (E_{0})$ can be replaced by $\Lambda (E_{0})$.
\end{remark}

\begin{proof}
As in Lemma \ref{lem1}, we have, modulo an operator ${\mathcal O} (h^{\infty})$ in ${\mathcal B}(L^{2} , L^{2})$,
\begin{equation*}
K_{{\mathcal L}}=\frac{i}{(2\pi)^n h^{n+1}}\iint \chi(t) e^{i(\varphi(t,x,\theta)-y\cdot\theta +tE_{0} )/h} e^{i t E_{1}} b(t,x, y, \theta ;h ) \, d t \, d \theta ,
\end{equation*}
and we consider $(t,\theta)$ as phase variables. Here, $e^{i t E_{1}} \chi (t) b (t,x, y , \theta ,h) \sim \sum_{j} b_{j} (t,x, y , \theta ) h^{j}$ is a classical symbol of order $0$ and has compact support in $t, x, y, \theta$ with $\Pi_{y, \theta} \supp ( e^{i t E_{1}} \chi b ) \subset \supp ( \alpha )$. We have to show that the function $\Phi : \R^n \times \R^n \times \R^{n+1} \to \R$ given by
\begin{equation*}
\Phi(x,y, (t,\theta) )=\varphi(t,x,\theta)-y\cdot\theta + t E_{0} ,
\end{equation*}
is a non-degenerate phase function. We denote
\begin{align*}
C_{\Phi}=& \{(x,y,t,\theta) \in \supp (\chi b) ; \  \Phi'_{t}(t,\theta,x,y)=0,\  \Phi'_{\theta}(t,\theta,x,y)=0\}  \\
=& \{ (x,y,t,\theta) \in \supp (\chi b) ; \ \varphi'_{t} + E_{0} = 0 , \  \varphi'_{\theta} = y \} ,
\end{align*}
the critical set of the phase $\Phi$ intersected with the support of the symbol. We have to show that at any point $(x,y , t,\theta )$ of $C_{\Phi}$, the matrix 
\begin{equation*}
\left( \begin{array}{c}
d\Phi'_{t} (x,y,t,\theta)\\
d\Phi'_{\theta} (x,y,t,\theta)
\end{array} \right)
=\left( \begin{array}{cccc}
\varphi''_{t,t}&\varphi''_{t,\theta}&\varphi''_{t,x}&0\\
\varphi''_{\theta,t}&\varphi''_{\theta,\theta}&\varphi''_{\theta,x}&- {\rm Id}
\end{array} \right) ,
\end{equation*}
is of maximal rank. The bottom $n$ rows are clearly independent and it is enough to prove that the first line does not vanish on the compact $C_{\Phi}$. Assume that the first line vanishes at some point of $C_{\Phi}$. At this point, $(y, \theta ) \in \supp ( \alpha )$, $\varphi'_{t} + E_{0} =0$ and $\varphi'_{\theta} =y$. Differentiating \eqref{a3} with respect to $t$, we obtain
\begin{equation*}
( 0 , \varphi''_{t,x} ) = H_{p} \big( \exp (t H_{p} ) ( \varphi'_{\theta} , \theta ) \big) + d_{( \varphi'_{\theta} , \theta )} \exp (t H_{p} ) (\varphi''_{t, \theta} , 0) ,
\end{equation*}
and then
\begin{equation*}
H_{p} \big( \exp (t H_{p} ) ( y , \theta ) \big) =( 0 , 0 ).
\end{equation*}
Since $\big( d_{(x, \xi)} \exp (t H_{p}) \big) (H_{p} (x, \xi )) = H_{p} \big( \exp (t H_{p}) (x, \xi ) \big)$, we deduce
\begin{equation} \label{a4}
H_{p} (y , \theta ) = (0,0).
\end{equation}
Moreover, from $\varphi'_{\theta} =y$, \eqref{a3}, the eikonal equation \eqref{eiko} and $\varphi'_{t} + E_{0} =0$ we have
\begin{equation*}
p (y, \theta ) = p ( \varphi'_{\theta} , \theta ) = p (x , \varphi'_{x}) = -\varphi'_{t} =E_{0}.
\end{equation*}
But since $(y, \theta ) \in \supp  ( \alpha )$ and $H_{p}$ does not vanish on $\supp \alpha \cap p^{-1} (E_{0})$, this contradicts \eqref{a4}. Therefore, $\Phi$ is a non-degenerate phase function and ${\mathcal L}$ is an $h$-FIO with compactly supported symbol associated to
\begin{align*}
\Lambda_{\Phi} =& \{ (x, \Phi'_{x} (x, y, t, \theta) , y, - \Phi'_{y} (x, y, t, \theta) ) ; \ (x, y, t, \theta ) \in C_{\Phi} \}   \\
=& \{ ( x, \varphi'_{x} (t, x, \theta ) , y, \theta ); \ \varphi'_{t} + E_{0} = 0, \ \varphi'_{\theta} =y , \ (x,y,t,\theta) \in \supp (\chi b) \} \Subset \Lambda_{\alpha , \chi} (E_{0}) ,
\end{align*}
thanks to the equations \eqref{eiko} and \eqref{a4}. From Definition \ref{a44}, we obtain that the order of this $h$-FIO is $1/2$.
\end{proof}

We are  now able to prove the following

\begin{Lem}\sl \label{a5}
Let $\alpha \in C^{\infty}_{0} (T^{*} \R^{n})$ be such that $H_{p}(x, \xi ) \neq 0$ for all $(x, \xi ) \in \supp \alpha \cap p^{-1} (E_{0})$. For any $\chi\in C^{\infty}_{0}(]0,+\infty[)$, the operator ${\mathcal L}: L^2(\R^n)\to L^2(\R^n)$ defined by
\begin{equation*}
{\mathcal L} =\frac{i}{h}\int \chi (t) e^{-it(P-E)/h} d t \Op ( \alpha ) ,
\end{equation*}
is a $h$-FIO with compactly supported symbol of order $1/2$ associated with the canonical relation $\Lambda_{\alpha , \chi} (E_{0} )$ given by
\begin{align*}
\Lambda_{\alpha, \chi} (E_{0} ) = \{ (x, \xi , y , \eta ) \in T^*\R^n\times & T^*\R^n ; \ p (y , \eta ) =E_{0} , \ (y, \eta ) \in \supp ( \alpha ) +  B ( 0 , \varepsilon ), \\
&\text{and } \exists t \in \supp \chi + ] - \varepsilon , \varepsilon [ ,\  (x,\xi)=\exp (t H_{p})(y,\eta) \} ,
\end{align*}
for any $\varepsilon >0$.
\end{Lem}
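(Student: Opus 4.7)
The plan is to reduce to the short-time case already handled in Lemma \ref{lem2}, by localizing $\chi$ in time and exploiting the group property of the unitary propagator; each piece will then be written as a composition of the $h$-FIOs given by Lemmas \ref{lem1} and \ref{lem2}.

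First I would fix the constant $\delta>0$ provided by Lemma \ref{lem2}. Since $\supp \chi$ is compact in $]0,+\infty[$, it can be covered by finitely many open intervals $I_j \Subset ]0,+\infty[$ of length strictly less than $\delta/2$, together with a subordinate partition of unity $\chi = \sum_j \chi_j$, $\chi_j \in C_0^\infty(I_j)$. For each $j$ I would choose $t_j \geq 0$ so that the translate $\widetilde{\chi}_j(s) := \chi_j(s+t_j)$ belongs to $C_0^\infty(]0,\delta[)$, which is possible because $|I_j| < \delta/2$. The group property then gives
$$\frac{i}{h}\int \chi_j(t)\,e^{-it(P-E)/h}\,dt\,\Op(\alpha) = e^{-it_j(P-E)/h}\left(\frac{i}{h}\int \widetilde{\chi}_j(s)\,e^{-is(P-E)/h}\,ds\right)\Op(\alpha).$$

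Next I would apply Lemma \ref{lem2} to the parenthesized operator composed with $\Op(\alpha)$, obtaining an $h$-FIO of order $1/2$ with compactly supported symbol, associated with $\Lambda_{\alpha,\widetilde{\chi}_j}(E_0)$. By Lemma \ref{lem1}, the factor $e^{-it_j(P-E)/h}$ is an $h$-FIO of order $0$ whose canonical relation is the graph of the symplectomorphism $\exp(t_j H_p)$. Composition with the graph of a symplectomorphism is automatic, preserves the order, and pushes the canonical relation forward by $\exp(t_j H_p)$ on the target variable. After the substitution $\tau = s+t_j$, this image is precisely the portion of $\Lambda_{\alpha,\chi_j}(E_0)$ with $\tau \in \supp\chi_j + \,]-\varepsilon,\varepsilon[$. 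Summing over the finite index set of $j$, $\mathcal{L}$ is realized as a sum of $h$-FIOs of order $1/2$ whose canonical relations together cover $\Lambda_{\alpha,\chi}(E_0)$, which is the desired conclusion.

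The most delicate point is the composition step, but it is harmless here because the canonical relation of $e^{-it_j(P-E)/h}$ is the graph of a diffeomorphism, a setting in which $h$-FIO composition is well-defined without any transversality assumption and the order adds correctly. The hypothesis $H_p \neq 0$ on $\supp\alpha \cap p^{-1}(E_0)$ is used only when invoking Lemma \ref{lem2} on the original $\alpha$, so no preservation across the translations $t_j$ is required, and the construction above faithfully produces the canonical relation $\Lambda_{\alpha,\chi}(E_0)$ stated in the lemma.
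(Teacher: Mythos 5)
Your proposal follows essentially the same route as the paper's proof: localize $\chi$ in time into pieces short enough for Lemma~\ref{lem2} to apply, use the group property of the propagator to translate each piece back to $]0,\delta[$, and then compose with $e^{-it_j(P-E)/h}$ (Lemma~\ref{lem1}) whose canonical relation is the graph of a symplectomorphism, so that the composition is transversal and the order is preserved. The only small point you gloss over is that $e^{-it_j(P-E)/h}$ does not have compactly supported symbol, so to apply Theorem~\ref{composeFIO} one should, as the paper does, first insert compactly supported pseudodifferential cutoffs $\Op(\beta)$, $\Op(\gamma)$ around the propagator, which is justified by Egorov's theorem and the fact that the short-time factor already has compactly supported symbol.
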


Remark \ref{a13} still applies here and one can, formally, replace $\Lambda_{\alpha , \chi} (E_{0})$ by $\Lambda (E_{0})$.

\begin{proof}
For  $\delta>0$ small enough so that Lemma \ref{lem2} applies, we can find $\widetilde{\chi} \in C^{\infty}_{0}(]0,\delta [)$ so that, for some $\nu >0$,
\begin{equation*}
\sum_{k\in \N}\widetilde\chi(y-\nu k)=1.
\end{equation*}
We have, for some $N\in \N$,
\begin{align*}
{\mathcal L} =& \frac{i}{h}\sum_{k\in\N}\int \chi (t) \widetilde{\chi} (t- \nu k)  e^{-it(P-E)/h} d t \Op ( \alpha ) = \frac{i}{h} \sum_{k=0}^{N} \int \chi (t ) \widetilde{\chi} (t- \nu k)  e^{-it(P-E)/h} d t \Op ( \alpha )   \\
=& \frac{i}{h} \sum_{k=0}^{N} e^{-i \nu k(P-E)/h} \circ \int \chi (t+ \nu k)\widetilde{\chi} (t)  e^{-it(P-E)/h} d t \Op ( \alpha ) .
\end{align*}
Using that the operator in Lemma \ref{lem2} is a $h$-FIO with compactly supported symbol and the Egorov theorem, we can find $\beta , \gamma \in C^{\infty}_{0} (T^{*} \R^{n})$ such that
\begin{equation*}
{\mathcal L} = \frac{i}{h} \sum_{k=0}^{N} \Op ( \beta ) e^{-i \nu k(P-E)/h} \Op ( \gamma ) \circ \int \chi (t+ \nu k)\widetilde{\chi} (t)  e^{-it(P-E)/h} d t \Op ( \alpha )  + R,
\end{equation*}
where $R = \CO (h^{\infty} )$ in ${\mathcal B} (L^{2} , L^{2})$. From Lemma \ref{lem2},
\begin{equation*}
\Op ( \beta ) e^{-i \nu k(P-E)/h} \Op ( \gamma ) \in \mathcal{I}_{h}^{0} \big( \R^{n} \times \R^{n} , \Lambda_{k} {}' \big)
\end{equation*}
with compactly supported symbol.

To finish the proof, it is enough to compose this operator with the $h$-FIOs described in Lemma \ref{lem1}.
Since $\Lambda_{k}$ is given by a canonical transformation, $\Lambda_{k} \times \Lambda_{\alpha, \chi (t + \nu k ) \widetilde{\chi} (t)} (E_{0})$ intersects $T^{*} \R^{n} \times \diag ( T^{*} \R^{n} \times T^{*} \R^{n} ) \times T^{*} \R^{n}$ transversally (cleanly with excess $e=0$). Then, using Theorem \ref{composeFIO}, they compose as $h$-FIOs with compactly supported symbol of order $1/2$ with canonical relation
\begin{equation*}
\Lambda_{k} \circ \Lambda_{\alpha, \chi (t + \nu k ) \widetilde{\chi} (t)} (E_{0} ) = \Lambda_{\alpha, \chi (t) \widetilde{\chi} (t - \nu k)} (E _{0}) .
\end{equation*}
Summing over $k$, we obtain the lemma.
\end{proof}

\begin{proof}[Proof of Theorem \ref{ThRes}]
From Proposition \ref{resFIO}, to calculate ${\mathcal I}$, it is enough to compose the $h$-FIOs appearing in \eqref{a6}. We will use Theorem \ref{composeFIO} to make these compositions. As in the end of the proof of Lemma \ref{a5}, we have from Lemma \ref{lem1} and Lemma \ref{a5},
\begin{equation}\label{a7}
\left(\frac{i}{h} \int \chi(t) e^{-it(P-E)/h} dt\right) e^{iT_{1}(P-E)/h} \Op (\alpha^{-}) \in \mathcal{I}_{h}^{\frac{1}{2}} \big( \R^{n} \times \R^{n} , \Lambda_{\alpha \circ \exp (T_{1} H_{p}) , \chi } (E_{0}) ' \big) ,
\end{equation}
with compactly supported symbol.

We recall that, from \cite[Remark 2.7]{BoFuRaZe07_01},
\begin{equation} \label{a8}
\mathcal{J}(E) \in \mathcal{I}_{h}^{-\frac{\sum_{j=1}^{n} \lambda_{j}}{2\lambda_1}} \big( \R^{n} \times \R^{n} , \Lambda_+ \times \Lambda_-  {}' \big) ,
\end{equation}
with compactly supported symbol. The manifold $( \Lambda_{+} \times \Lambda_{-} ) \times \Lambda_{\alpha \circ \exp (T_{1} H_{p}) , \chi } (E_{0})$ intersects $T^{*} \R^{n} \times \diag ( T^{*} \R^{n} \times T^{*} \R^{n} ) \times T^{*} \R^{n}$ cleanly with excess $e=1$ and
\begin{equation*}
( \Lambda_{+} \times \Lambda_{-} ) \circ \Lambda_{\alpha \circ \exp (T_{1} H_{p}) , \chi} (E_{0}) \subset \Lambda_{+} \times \Lambda_{-}.
\end{equation*}
Then, the composition rules for the $h$-FIOs in \eqref{a7} and \eqref{a8} implies that
\begin{equation}\label{a10}
\mathcal{J}(E) \left(\frac{i}{h} \int \chi(t) e^{-it(P-E)/h} dt\right) e^{iT_{1}(P-E)/h} \Op (\alpha^{-}) \in \mathcal{I}_{h}^{1 -\frac{\sum_{j=1}^{n} \lambda_{j}}{2\lambda_1}} \big( \R^{n} \times \R^{n} , \Lambda_{+} \times \Lambda_{-} {}' \big),
\end{equation}
with compactly supported symbol.

Finally, from Lemma \ref{lem1}, 
\begin{equation}\label{a11}
e^{-iT_{1}(P-E)/h} \Op (\alpha^+_{T_{1}}) \in \mathcal{I}_{h}^{0} \big( \R^{n} \times \R^{n} , \Lambda_{T_{1}} {}' \big) ,
\end{equation}
with a compactly supported symbol. Since $\Lambda_{T_{1}}$ is given by a canonical transformation, the intersection between $\Lambda_{T_{1}} \times ( \Lambda_{+} \times \Lambda_{-} )$ and $T^{*} \R^{n} \times \diag ( T^{*} \R^{n} \times T^{*} \R^{n} ) \times T^{*} \R^{n}$ is clean with excess $e =0$. Moreover
\begin{equation*}
\Lambda_{T_{1}} \circ ( \Lambda_{+} \times \Lambda_{-} ) \subset \Lambda_{+} \times \Lambda_{-}.
\end{equation*}
Then, \eqref{a6} and  the composition of the $h$-FIOs appearing in \eqref{a10} and \eqref{a11} gives
\begin{equation}\label{a12}
\Op (\alpha^{+}) \CR (E + i 0) \Op (\alpha^{-}) \in \mathcal{I}_{h}^{1 -\frac{\sum_{j=1}^{n} \lambda_{j}}{2\lambda_1}} \big( \R^{n} \times \R^{n} , \Lambda_{+} \times \Lambda_{-} {}' \big) .
\end{equation}
\end{proof}

\Subsection{Microlocal representation of the spectral function}\label{Smspf}

We give here the representation of the spectral function as an oscillatory 
integral operator microlocally near any point $(\rho_+, \rho_-) \in 
\widetilde{\Lambda_+\times\Lambda_-}.$
The oscillatory integral representation near points in 
$\widetilde{\Lambda_-\times\Lambda_+}$ is analogous.

\begin{Th}\sl
Let $(\rho_+, \rho_-) \in\widetilde{\Lambda_+\times\Lambda_-}$. Then there exist $m\in\mathbb{N}$, a non-degenerate phase function $\Psi\in C^{\infty}\left(\mathbb{R}^{2n+m}\right)$ and a symbol $b\in S_{2n+m}^{1-\frac{\sum_{j=1}^{n}\lambda_{j}}{2\lambda_1}+\frac{n}{2}+\frac{m}{2}}(1)$ such that, microlocally near $\left(\rho_+, \rho_-\right)$,
\begin{equation*}
e_{E} (x, y ;h) = \int_{\R^{m}} e^{i \Psi(x, y, \tau) /h} b(x, y, \tau ;h) \, d \tau .
\end{equation*}

Furthermore, if $(\rho_+, \rho_-) \in \widetilde{\Lambda_+\times\Lambda_-}$ and the projections $\pi: T^{*}\mathbb{R}^{n} \longrightarrow \mathbb{R}^{n}$ are diffeomorphisms when restricted to some neighborhood of $\rho_{\pm}$ in $\Lambda_{\pm}$, then there exists a symbol $b\in S^{1-\frac{\sum_{j=1}^{n}\lambda_{j}}{2\lambda_{1}}+\frac{n}{2}}_{2n}(1)$ such that, microlocally near $\left(\rho_+, \rho_-\right)$,
\begin{equation*}
e_{E} (x, y ;h) = e^{i (S_{+} (x) + S_{-} (y) ) /h} b(x, y ;h) ,
\end{equation*}
where
\begin{equation*}
S_{\pm} ( z) = \int_{\gamma_{\pm} (z)} \frac12 \vert \xi_{\pm}
(t) \vert^{2} + E_{0} -V(x_{\pm}(t)) \, d t ,
\end{equation*}
are the actions over the Hamiltonian half-trajectories $\gamma_{\pm} (z) = (x_{\pm} , \xi_{\pm} )$ which start at $\pi_{\vert_{\Lambda_{\pm}}}^{-1} (z)$ and approach $(0, 0)$ as $t\to \mp\infty$.
\end{Th}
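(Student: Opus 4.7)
Since the corollary following Theorem \ref{ThRes} establishes that the spectral function $e_E$ is an $h$-FIO of order $1-\frac{\sum_{j=1}^{n}\lambda_j}{2\lambda_1}$ associated to $\Lambda_+\times \Lambda_-{}'$ microlocally near $(\rho_+,\rho_-) \in \widetilde{\Lambda_+\times\Lambda_-}$, and since this canonical relation is a smooth Lagrangian submanifold of $T^*(\mathbb{R}^n\times \mathbb{R}^n)$ away from $(0,0)$, the general representation theorem for semiclassical Lagrangian distributions (reviewed in Appendix \ref{scanal}) applies directly. It produces a non-degenerate phase function $\Psi\in C^\infty(\mathbb{R}^{2n+m})$ locally parametrizing $\Lambda_+\times \Lambda_-{}'$ and a symbol $b$ in the stated class. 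The shift of the symbol order from $1-\frac{\sum \lambda_j}{2\lambda_1}$ to $1-\frac{\sum\lambda_j}{2\lambda_1}+\frac{n}{2}+\frac{m}{2}$ is the usual prefactor $(2\pi h)^{-(2n+m)/2}$ arising when one writes a Lagrangian distribution on the $2n$-dimensional manifold $\mathbb{R}^n\times \mathbb{R}^n$ with $m$ auxiliary phase variables.

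For the WKB-type statement, the additional projection hypothesis means that near $\rho_\pm$, the Lagrangian $\Lambda_\pm$ is locally the graph $\{(x,\zeta_\pm(x))\}$ of a closed $1$-form $\zeta_\pm\, dx$. I will verify that the actions defined in the theorem are primitives: $\nabla S_+ = \zeta_+$ on $\pi(\Lambda_+)$ and $\nabla S_- = -\zeta_-$ on $\pi(\Lambda_-)$. Using energy conservation $\tfrac{1}{2}|\xi|^2 + V(x)= E_0$ on $p^{-1}(E_0)$ and $\dot x_\pm=\xi_\pm$ along bicharacteristics, the integrand simplifies to $\tfrac{1}{2}|\xi|^2+E_0-V=|\xi|^2=\xi\cdot\dot x$, so $S_\pm(z)$ is the integral of the canonical $1$-form $\xi\, dx$ along the half-trajectory $\gamma_\pm(z)$. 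Convergence is ensured by the exponential decay $|x_\pm(t)|,|\xi_\pm(t)|=\mathcal{O}(e^{\mp\lambda_1|t|})$ at $t\to\mp\infty$ recalled before the statement. Differentiating $t\mapsto S_\pm(x_\pm(t))$ and identifying it with $\nabla S_\pm(x_\pm(t))\cdot \xi_\pm(t)$, one gets $|\xi_+(t)|^2$ for $S_+$ versus $-|\xi_-(t)|^2$ for $S_-$, the latter sign arising because the remaining integration domain shrinks as $t$ grows; this yields the claimed formulas.

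Consequently $\Psi(x,y)=S_+(x)+S_-(y)$ satisfies $(\partial_x\Psi,\partial_y\Psi)=(\zeta_+(x),-\zeta_-(y))$, so that the graph of $d\Psi$ in $T^*(\mathbb{R}^n\times\mathbb{R}^n)$ coincides locally with $\Lambda_+\times \Lambda_-{}'$ near $(\rho_+,\rho_-)$. This is a non-degenerate phase function with $m=0$ auxiliary variables parametrizing the relevant Lagrangian, and the oscillatory integral representation from the first part collapses to the WKB form $e_E(x,y;h)=e^{i(S_+(x)+S_-(y))/h}b(x,y;h)$ with the $m=0$ specialization of the symbol class. The main technical obstacle is not analytical but bookkeeping: tracking the sign conventions (the prime on $\Lambda_-{}'$, the flow direction in the definition of $S_\pm$) so that $\Psi$ parametrizes the correct canonical relation, and verifying the symbol order against the normalization adopted in Appendix \ref{scanal}. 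The genuine analytic input --- convergence of the improper action integrals --- is elementary given the stable/unstable manifold theorem and assumption \emph{(A1)}.
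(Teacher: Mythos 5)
Your overall route is the paper's: the first part is the representation theorem for $h$-FIOs applied to the Corollary of Theorem \ref{ThRes} (the paper invokes \cite[Theorem 1]{Al05_02}), and the second part reduces to showing that $\Lambda_{\pm}=\{(z,\pm\partial_z S_{\pm}(z))\}$ locally, so that $\Psi(x,y)=S_+(x)+S_-(y)$ parametrizes $\Lambda_+\times\Lambda_-{}'$ with $m=0$. Your sign bookkeeping and the reduction of the integrand to $\vert\xi\vert^2=\xi\cdot\dot x$ via energy conservation are correct. (Minor slip: the order shift $+\frac n2+\frac m2$ comes from the normalization $h^{-r-\frac{n+m}{4}-\frac d2}$ of Definition \ref{a44} with base dimensions $n+n=2n$ and $d=m$ phase variables, i.e.\ a prefactor $h^{-\frac n2-\frac m2}$, not $(2\pi h)^{-(2n+m)/2}$; your stated conclusion is right but the justification is not.)

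There is, however, a genuine gap in the one step that carries the content of the second part. Your stated verification of $\nabla S_+=\zeta_+$ is ``differentiating $t\mapsto S_{\pm}(x_{\pm}(t))$ and identifying it with $\nabla S_{\pm}\cdot\xi_{\pm}$''. That computation only yields the scalar identity $\nabla S_{\pm}(x)\cdot\xi_{\pm}=\pm\vert\xi_{\pm}\vert^{2}$, i.e.\ the component of $\nabla S_{\pm}$ along the flow direction; it cannot by itself ``yield the claimed formulas'', which are $n$ equations. To pin down the full gradient one must differentiate $S_{\pm}(z)$ in all $n$ directions transverse to the flow. The paper does this by establishing $\partial_t\bigl(\xi_+(t,z)\,\partial_z x_+(t,z)\bigr)=\partial_z\bigl(\tfrac12\vert\xi_+\vert^2+E_0-V(x_+)\bigr)$ and integrating from $t=-\infty$, which requires not only $\xi_+(t,z)\to 0$ but also $\partial_z x_+(t,z)\to 0$ as $t\to-\infty$ --- the latter because $(0,0)$ is a node of $H_p$ restricted to $\Lambda_+$. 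Equivalently, your remark that $S_{\pm}$ is the integral of the canonical one-form over a path in a Lagrangian does contain the right idea (closedness of $\xi\,dx\vert_{\Lambda_{\pm}}$ plus a common basepoint), but since the basepoint $(0,0)$ is reached only as $t\to\mp\infty$, the Stokes argument has a non-compact boundary term whose vanishing is exactly the estimate above; checking convergence of the action integral alone does not justify differentiating under the improper integral. This is the piece you would need to supply to make the proof complete.
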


\begin{proof}
The first part of the theorem follows from \cite[Theorem 1]{Al05_02} and Theorem \ref{ThRes}. Assume now that $\pi_{\vert_{\Lambda_{\pm}}}$ is a diffeomorphism in a neighborhood of $\rho_{\pm}$. We will now show that
\begin{equation}\label{a41}
\Lambda_{\pm} = \big\{ ( z, \pm \partial_{z} S_{\pm} ( z) ) ; \  z \text{ near } \pi ( \rho_{\pm}) \big\} ,
\end{equation}
locally near $\rho_{\pm}$. We only prove \eqref{a41} for $\Lambda_{+}$ since the manifold $\Lambda_{-}$ can be treated by the same way. Let
\begin{equation*}
( x_{+} (t, z) , \xi_{+} (t, z) ) = \exp (t {\rm H}_{p} ) \big( \pi_{\vert_{\Lambda_{+}}}^{-1}(z) \big) .
\end{equation*}
From the definition of the Hamiltonian vector field, we have
\begin{align}
\partial_{t} \big( \xi_{+} (t, z ) \partial_{z} ( x_{+} ( t , z) ) \big) =& \xi_{+} (t, z ) \partial_{z} ( \xi_{+} ( t , z ) ) - ( \partial_{x} V) (x_{+} (t, z )) \partial_{z} ( x_{+} ( t , z ) )   \nonumber    \\
=& \frac{1}{2} \partial_{z} \big( \vert \xi_{+} (t, z ) \vert^{2} \big) - \partial_{z} \big( V (x_{+}(t, z )) \big)  \nonumber    \\
=& \partial_{z} \Big( \frac{1}{2} \vert \xi_{+} (t, z ) \vert^{2} + E_{0} -  V (x_{+}(t, z ) ) \Big) .  \label{a42}
\end{align}
Moreover, as $t \to - \infty$, we have $\xi_{+} (t, z) \to 0$ and
\begin{equation*}
\partial_{z} ( x_{+} ( t , z) ) = d \Pi_{x} \circ d \exp ( t H_{p} ) \big( \partial_{z} x_{+} (0, z ) , \partial_{z} \xi_{+} (0, z ) \big) \longrightarrow 0,
\end{equation*}
since $(  x_{+} (0, z ) ,  \xi_{+} (0, z ) ) \in \Lambda_{+}$ for all $z$ and $0$ is a unstable node of $H_{p}$ restricted to $\Lambda_{+}$. Using $x_{+} ( 0 , z ) =z$, we obtain
\begin{equation*}
\partial_{z} S_{\pm} ( z) = \int_{- \infty}^{0} \partial_{z} \Big( \frac{1}{2} \vert \xi_{+} (t, z ) \vert^{2} + E_{0} -  V (x_{+}(t, z )) \Big) d s =\xi_{+} (0, z ) \partial_{z} ( x_{+} ( 0 , z ) ) = \xi_{+} (0, z ) .
\end{equation*}
Since $\Lambda_{+} = \{ ( z , \xi_{+} (0, z ) ) ; \  z \text{ near } \pi ( \rho_{\pm}) \}$ locally near $\rho_{+}$, we get \eqref{a41}. Then the second part of the theorem follows again from \cite[Theorem 1]{Al05_02} and Theorem \ref{ThRes}.
\end{proof}

\begin{remark}\sl
From \cite[Section 2.2]{BoFuRaZe07_01} we have that there exists a neighborhood $\Omega\subset T^{*}\mathbb{R}^{n}$ of $(0, 0)$ such that the projection $\pi: T^{*}\mathbb{R}^{n}\to\mathbb{R}^{n}$ restricted to $\Omega\cap\Lambda_{\pm}$ is a diffeomorphism.
\end{remark}

\section{The scattering matrix}\label{Ssa}

\Subsection{Representation of the scattering matrix}\label{Srepr}

Here we review the representation of the short range scattering 
matrix
which we shall use in the proof of Theorem \ref{Thsa}.
The construction is close to the one used by Robert and Tamura 
\cite{RoTa89_01} and constitutes a semi-classical adaptation of the 
representation of the short range amplitude originally established by Isozaki and 
Kitada \cite{IsKi86_01}. Their starting point is 
a set of  WKB parametrices for the wave operators given in (\ref{waveops}).

For $R_{0}\gg 0,$ $1<d_4<d_3<d_2<d_1<d_0,$ and 
$0<\sigma_4<\sigma_3<\sigma_2<\sigma_1<\sigma_0<1$ Robert and 
Tamura construct phase 
functions $\Phi_{\pm}$ and symbols $\left(a_{\pm j}\right)_{j=0}^{\infty}$ 
and  $\left(b_{\pm j}\right)_{j=0}^{\infty}$ such that: 
\begin{enumerate}[{\it i)}]
\item $\Phi_{\pm}\in C^{\infty}(T^{*}\mathbb{R}^{n})$ solve the eikonal 
equation 
\begin{equation} \label{a14}
\frac{1}{2} \vert\nabla_{x}\Phi_{\pm}(x, \xi)\vert^{2}+V(x)=\frac{1}{2} \xi^{2}
\end{equation}
for $(x, \xi)\in\Gamma_{\pm}(R_0, d_0, \pm\sigma_0)$  respectively (see (\ref{gammainout})
for the definition of these sets).

\item Let $A_{m}(\Omega)$ be the class of symbols $a$ such that $(x, \xi)\mapsto a(x, \xi ;h)$ belongs to $C^{\infty}(\Omega)$ and, for any $( \alpha , \beta ) \in \mathbb{N}^{n} \times \mathbb{N}^{n}$ and $L>0$,
\begin{equation*}
\vert \partial_{x}^{\alpha}\partial_{\xi}^{\beta}a(x, \xi ;h) \vert \leq C_{\alpha , \beta} \langle x \rangle^{m-|\alpha|}\langle\xi\rangle^{-L},
\end{equation*}
for all $(x, \xi ) \in \Omega$. We have, from Proposition 2.4 of \cite{IsKi85_01},
\begin{equation} \label{a17}
\Phi_{\pm}(x,\xi)- \langle x, \xi \rangle \in A_{1 - \rho} \left(\Gamma_{\pm}(R_0, d_0, \pm\sigma_{0})\right).
\end{equation}

\item For all $(x, \xi)\in T^{*}\mathbb{R}^{n}$
\begin{equation*}
\Big\vert  \frac{\partial^{2}\Phi_{\pm}}{\partial_{x_{j}}\partial_{\xi_{k}}}(x, \xi)-\delta_{jk}
\Big\vert
<\varepsilon (R_0), 
\end{equation*}
where $\delta_{jk}$ is the Kronecker delta and $\varepsilon(R_0)\to 0$ as $R_{0}\to\infty.$

\item $\left( a_{\pm j} \right)_{j}$ and $\left( b_{\pm j} 
\right)_{j}$ are determined inductively 
as solutions to certain transport equations and satisfy 
$$
a_{\pm j}\in 
A_{-j}(\Gamma_{\pm}(3R_0, d_1, \pm \sigma_1)),\quad 
\supp a_{\pm j}\subset\Gamma_{\pm}(3R_0, 
d_1, \pm \sigma_1), 
$$
$$
b_{\pm j}\in A_{-j}(\Gamma_{\pm}(5R_0, d_3, \pm\sigma_{4})),\quad 
\supp b_{\pm j}\subset\Gamma_{\pm}(5R_0, 
d_3, \pm\sigma_{4}).
$$
\end{enumerate}
Using the Borel process, we can find two symbols $a_{\pm}\in 
A_{0}(\Gamma_{\pm}(3R_0, d_1, 
\pm \sigma_1))$ and 
$b_{\pm}\in A_{0}(\Gamma_{\pm}(5R_0, d_3, \pm\sigma_{4}))$ such 
that 
$a_{\pm}\sim\sum_{j=0}^{\infty} h^j a_{\pm j}$ and 
$b_{\pm}\sim\sum_{j=0}^{\infty} h^j b_{\pm j}$.

For a symbol $c$ and a phase function $\varphi$, we denote by $I_{h}(c, \varphi)$ the oscillatory integral 
\begin{equation*}
I_{h}(c, \varphi)=\frac{1}{(2\pi h)^{n}}\int_{\mathbb{R}^{n}} e^{i(\varphi(x, \xi)-\langle y, 
\xi\rangle)/h}c(x, \xi ;h) \, d \xi
\end{equation*}
and let 
\begin{equation*}
\begin{aligned}
K_{\pm a}(h) & =P(h)I_{h}(a_{\pm}, \Phi_{\pm})-I_{h}(a_{\pm}, \Phi_{\pm})P_{0}(h)\\
K_{\pm b}(h) & =P(h)I_{h}(b_{\pm}, \Phi_{\pm})-I_{h}(b_{\pm}, \Phi_{\pm})P_{0}(h).
\end{aligned}
\end{equation*} 
The scattering matrix, ot more precisely the operator  $\CT(E, h)$ is then given for 
$E\in\big] \frac{2}{d_{4}^{2}}, \frac{d_{4}^{2}}{2} \big[$ by (see \cite[Theorem 3.3]{IsKi86_01})
\begin{equation}\label{ik}
\CT(E, h)= T_{+1}(E,h) + T_{-1}(E, h) - T_{2}(E, h) ,
\end{equation}
where
\begin{equation*}
T_{\pm 1}(E, h)=F_{0}(E, h)I_{h}(a_{\pm}, \Phi_{\pm})^{*}K_{\pm b}(h)
F_{0}^{-1}(E, h)
\end{equation*}
and, with $F_{0}(E, h)$ given in \eqref{pmlo1},
\begin{equation}\label{t2}
T_{2}(E, h)=F_{0}(E, 
h)K_{+a}^{*}(h)\CR(E+i0, 
h)\left(K_{+b}(h)+K_{-b}(h)\right)F_{0}^{*}(E, h).
\end{equation}

\Subsection{Proof of Theorem \ref{Thsa}}\label{Sthsa}

Since $\CS(E, h)$ is a unitary operator on $L^{2}(\mathbb{S}^{n-1}),$ 
we have, by \cite[Lemma 1]{Al06_01}, that its kernel $K_{\CS(E, h)}\in \CS'_{h}(\mathbb{S}^{n-1}\times\mathbb{S}^{n-1})$ and 
therefore $K_{\CT(E, h)}\in \CS'_{h}(\mathbb{S}^{n-1}\times\mathbb{S}^{n-1}).$

Since we are working away from the diagonal in $\mathbb{S}^{n-1}\times\mathbb{S}^{n-1}$ 
we can use integration by parts, as in \cite{RoTa89_01} and \cite{Mi04_01}, to obtain
\begin{equation*}
K_{T_{\pm 1}(E,h)}=\mathcal{O}_{C^{\infty}(\mathbb{S}^{n-1}\times\mathbb{S}^{n-1}\backslash\diag 
(\mathbb{S}^{n-1}\times\mathbb{S}^{n-1}))}(h^{\infty}).
\end{equation*}
Therefore
\begin{equation}\label{Tpm1}
WF_{h}^{f} \big( {K_{T_{\pm 1}(E, h)}}_{|_{\mathbb{S}^{n-1}\times\mathbb{S}^{n-1}\setminus \diag \left(\mathbb{S}^{n-1}\times\mathbb{S}^{n-1}\right)}} \big) = \emptyset.
\end{equation}

We now observe that the proof of \cite[Lemma 2.1]{RoTa89_01} depends only on the estimate 
\eqref{resest} and the support properties of the symbols $a_{\pm}$ and $b_{\pm}$, and by the 
same method of proof, we obtain the following strengthened version of \cite[Lemma 2.1]{RoTa89_01}.

\begin{Lem}\label{l2.1RT}\sl Suppose {\it (A1)} and {\it (A2)}.
For $\gamma\gg 1$, 
\begin{enumerate}[i)]
\item $\Vert K_{+a}^{*}(h)\CR(E+i0)K_{+b}(h) \Vert_{\mathcal{B} ( L_{-\gamma}^{2} , L_{\gamma}^{2} )} = \mathcal{O}(h^{\infty})$ ,
\item $\Vert K_{+a}^{*}(h)\CR(E+i0)\left(1-\chi_b\right)K_{+b}(h) \Vert_{\mathcal{B} ( L_{-\gamma}^{2} , L_{\gamma}^{2} )} = \mathcal{O}(h^{\infty})$ ,
\item $\Vert \left(\left(1-\chi_a\right)K_{+a}(h)\right)^{*}\CR(E+i0)\chi_b K_{-b}(h)\Vert_{\mathcal{B} ( L_{-\gamma}^{2} , L_{\gamma}^{2} )} = \mathcal{O}(h^{\infty})$ .
\end{enumerate}
\end{Lem}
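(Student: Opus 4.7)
The authors have already pinned down the strategy: the proof of \cite[Lemma 2.1]{RoTa89_01} uses only (a) a polynomial-in-$h^{-1}$ resolvent estimate and (b) the microlocal support properties of the symbols $a_{\pm}$ and $b_{\pm}$. Since in our trapping setting the bound \eqref{resest} still loses only the polynomial factor $|\log h|/h$, and the Isozaki--Kitada construction recalled in Subsection \ref{Srepr} is identical to that of \cite{RoTa89_01}, my plan is to retrace the three estimates in turn, keeping track of where each factor of $h^\infty$ is produced and of how it absorbs the resolvent loss.

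First, I would recall the key consequence of the Borel process of Subsection \ref{Srepr}: the full symbols $a_\pm \sim \sum_j h^j a_{\pm j}$ and $b_\pm \sim \sum_j h^j b_{\pm j}$ satisfy the transport equations to infinite order on the outgoing/incoming cones on which they are supported. Thus the oscillatory-integral symbols of $K_{\pm a}$ and $K_{\pm b}$, obtained as remainders after the cancellation of the principal and all subprincipal terms, are $\mathcal{O}(h^\infty)$-small on open subsets of phase space. In particular, for every $N\in\N$, the pieces of $K_{\pm a}$ and $K_{\pm b}$ localized in the interior of their respective supports have $\CB(L^2_{-\gamma},L^2_\gamma)$-norm $\mathcal{O}(h^N)$, provided $\gamma\gg 1$ is chosen large enough to dominate the polynomial growth of the oscillatory integrals in the weighted norm.

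Second, I would treat the three products in turn. For (i), both $K_{+a}$ and $K_{+b}$ carry an $\mathcal{O}(h^\infty)$ commutator factor coming from the WKB cancellation, so that the sandwiched resolvent, bounded by $\mathcal{O}(|\log h|/h)$ from $L^2_{\gamma}$ to $L^2_{-\gamma}$, is harmlessly absorbed; the product is $\mathcal{O}(h^\infty)$. For (ii), the cut-off $(1-\chi_b)$ isolates the region where the construction of $b_+$ provides an $h^\infty$ gain on $K_{+b}$, so that $(1-\chi_b)K_{+b}$ is by itself $\mathcal{O}(h^\infty)$ in the relevant weighted norm; composing with a $\CB(L^2_{-\gamma},L^2_{-\gamma})$-bounded $K_{+a}^*$ and the $\mathcal{O}(|\log h|/h)$ resolvent yields again $\mathcal{O}(h^\infty)$. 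For (iii), the same idea is used symmetrically: $(1-\chi_a)K_{+a}$ is $\mathcal{O}(h^\infty)$-small for the analogous reason, while $\chi_b K_{-b}$ is merely bounded, and the support structure guarantees that the composition is legitimate.

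The only real obstacle is bookkeeping: one must choose the weight $\gamma$ uniformly large enough to dominate every polynomial growth occurring in the intermediate oscillatory integrals and their commutators with $P$, and to ensure that all compositions are bounded maps on the appropriate scale of weighted spaces. This relies on the growth estimates of item (iv) of Subsection \ref{Srepr} and on the short-range decay \eqref{potential}, and is carried out exactly as in \cite{RoTa89_01}; substituting \eqref{resest} for their non-trapping bound is the only modification required, and the factor $|\log h|$ plays no role since it is absorbed by any $h^N$ with $N\geq 1$.
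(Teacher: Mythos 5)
Your reduction to Robert--Tamura is the right framing (and it is exactly what the paper says), but the actual mechanism you invoke for item \emph{i)} is wrong, and this is not a minor slip — if it were right, the entire representation \eqref{sared} would be vacuous.

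You claim that for \emph{i)}, ``both $K_{+a}$ and $K_{+b}$ carry an $\mathcal{O}(h^\infty)$ commutator factor coming from the WKB cancellation, so that the sandwiched resolvent is harmlessly absorbed.'' This is not the case. After one applies the eikonal equation \eqref{a14} and the transport equations, the symbol of $K_{\pm a}$ (and likewise $K_{\pm b}$) is $\mathcal{O}(h^\infty)$ only on the \emph{interior} of the cones $\Gamma_{\pm}(3R_0,d_1,\pm\sigma_1)$ (resp.\ $\Gamma_{\pm}(5R_0,d_3,\pm\sigma_4)$); it is $\mathcal{O}(1)$ in the transition zones where the cut-offs defining $a_{\pm}$ and $b_{\pm}$ are being differentiated, i.e.\ near the boundaries of these cones. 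Your own first paragraph acknowledges this (``$\mathcal{O}(h^\infty)$-small on open subsets,'' ``pieces localized in the interior''), but the argument you then give for \emph{i)} contradicts it by treating the full operators as small. If $K_{+a}$ were $\mathcal{O}(h^\infty)$ in $\mathcal{B}(L^2_{-\gamma},L^2_\gamma)$, then so would be $T_2$ in \eqref{t2} and hence $G$ in \eqref{sared}, and the subsequent analysis of Section \ref{Ssa} (where $G$ is the term carrying all the information about the dynamics near the critical point) would be pointless.

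The actual content of \cite[Lemma 2.1]{RoTa89_01} is a \emph{propagation} statement, not a size statement: the microlocal wavefront of the $\mathcal{O}(1)$ piece of $K_{+b}$ lies in an outgoing region, the outgoing resolvent $\mathcal{R}(E+i0)$ propagates forward in time and therefore only pushes this piece farther out in the direction of increasing $|x|$ and increasing $\cos(x,\xi)$, and this forward flow never reaches the region where the $\mathcal{O}(1)$ piece of $K_{+a}$ lives (which, because $3R_0 < 5R_0$, would require coming back inward). Because the forward bicharacteristics miss $\mathrm{WF}_h(K_{+a})$, the composition is $\mathcal{O}(h^\infty)$, and it is precisely for this step that one needs the polynomial resolvent bound \eqref{resest}: the $|\log h|/h$ loss is recouped by the $h^\infty$ gain coming from non-stationary phase, not from the operators $K_{\pm a}, K_{\pm b}$ being small. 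The same propagation/non-communication reasoning underlies \emph{ii)} and \emph{iii)}, with the additional input that $(1-\chi_a)$ and $(1-\chi_b)$ restrict to the far field where the transport-equation remainders are genuinely $\mathcal{O}(h^\infty)$. Your treatment of \emph{ii)} and \emph{iii)} is closer in spirit, but \emph{i)} as written is a gap; you need to replace the ``$K_{\pm}$ are small'' claim by the microlocal non-interaction argument based on the forward flow and the disjointness of the bad supports.

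One further remark on bookkeeping: you are right that the only change relative to the non-trapping setting of \cite{RoTa89_01} is substituting \eqref{resest} for their $\mathcal{O}(h^{-1})$ resolvent bound, and that the extra factor $|\log h|$ is absorbed. This part of your proposal matches the paper's intent exactly.
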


From \eqref{Tpm1}, Lemma \ref{l2.1RT}, and \cite[Equation (10)]{Al06_01} we then 
conclude, as in \cite[Corollary, page 168]{RoTa89_01}, that
\begin{equation}\label{Gu76_01}
WF_{h}^{f}(\chi(K_{\CS(E,h)}-c_{1} \, G))=\emptyset,
\end{equation}
for every $\chi\in 
C^{\infty}\left(\mathbb{S}^{n-1}\times\mathbb{S}^{n-1}\backslash
\diag(\mathbb{S}^{n-1}\times\mathbb{S}^{n-1})\right),$ where
\begin{equation}\label{sared}
G(\theta, \omega ; E, h) =\bra \CR(E+i0) e^{i\Phi_{-} ( y, \sqrt{2E_{0}}\omega ) /h}g_{-}( y, \omega ; h) , \; 
e^{i \Phi_{+} ( x, \sqrt{2E_{0}} \theta ) /h} g_{+}( x, \theta ; h)\ket ,
\end{equation}
\begin{equation*}\label{defg+}
g_{+}(x, \theta; h) = e^{- i \Phi_{+} (x, \sqrt{2E_{0}}\theta )/h}
[\chi_{a}, P_0(h)]a_{+} \big(x, \sqrt{2E} \theta ; h \big)
e^{i \Phi_{+} (x, \sqrt{2E}\theta )/h},
\end{equation*}
\begin{equation*}\label{defg-}
g_{-}(y, \omega; h)=e^{-i \Phi_{-} (y, \sqrt{2E_{0}}\omega )/h} [\chi_{b}, P_0(h)] b_{-} \big( y, \sqrt{2E} \omega ; h \big) e^{i \Phi_{-} (y, \sqrt{2E}\omega ) /h},
\end{equation*}
and 
\begin{equation*}
c_{1} = c_{1}(n, E, h)=-2i\pi(2E)^{\frac{n}{2} - 1}(2\pi h)^{-{n}}.
\end{equation*}
Here $\chi_{a} (x)$ and $\chi_{b} (y)$ are $C^{\infty}_{0} (\R^{n})$ functions with value $1$ in a large disc. In particular, the symbols $g_{+} (x, \theta ;h)$, $g_{-}(y, \omega; h) \in S^{-1} (1)$ have compact support (uniformly with respect to $h$). Notice that we have used the fact that $E-E_0=E_1h$.

From \eqref{sared}, one can see that $G ( \theta , \omega ;E ,h)$ is the kernel of the operator
\begin{equation}  \label{a24}
{\mathcal G} = {\mathcal M}_{+}^{*} \CR(E+i0) {\mathcal M}_{-},
\end{equation}
where ${\mathcal M}_{\pm} : L^2(\S^{n-1}) \to L^2(\R^{n})$ are given by
\begin{gather*}
K_{{\mathcal M}_{+}} (x, \theta ;h ) = e^{i \Phi_{+} ( x, \sqrt{2E_{0}} \theta ) /h} g_{+} ( x, \theta ; h) ,  \\
K_{{\mathcal M}_{-}} (x, \omega ;h ) = e^{i \Phi_{-} ( y, \sqrt{2E_{0}} \omega ) /h} g_{-} ( y, \omega ; h) .
\end{gather*}
The operator ${\mathcal M}_{+}$ can be view as an $h$-FIO
\begin{equation} \label{a27}
{\mathcal M}_{+} \in \mathcal{I}_{h}^{-\frac{2 n +3}{4}} \big( \R^{n} \times \S^{n-1} , C_{+} {}' \big) ,
\end{equation}
with compactly supported symbol (and no phase variable). The canonical relation $C_{+}$ is given by
\begin{equation} \label{a23}
\begin{aligned}
C_{+} = \big\{ (x , \xi , \theta , \sqrt{2 E_{0}} & z_{+} ) ; \ \xi = \partial_{x} \Phi_{+} (x, \sqrt{2 E_{0}} \theta ),  \\
&\sqrt{2 E_{0}} z_{+} = - \partial_{\theta} \Phi_{+} (x, \sqrt{2 E_{0}} \theta ) , \ (x, \theta ) \in \supp ( g_{+} ) + B (0, \varepsilon ) \big\} ,
\end{aligned}
\end{equation}
for any $\varepsilon >0$ (see Remark \ref{a13}). Notice that $\partial_{\theta}$ denotes the derivative on $\S^{n-1}$. Now we calculate more precisely $C_{+}$.

\begin{Lem}\sl \label{a31}
We have
\begin{equation*}
C_{+} = \big\{ (x , \xi , \theta , - \sqrt{2 E_{0}} z_{+}) ; \ \exists t \in \R ,  \ (x, \xi ) = \gamma_{+} (t , z_{+} , \theta , E_{0} ), \ (x, \theta ) \in \supp ( g_{+} ) + B (0, \varepsilon ) \big\} ,
\end{equation*}
where $\gamma_{+} (t , z , \alpha , E )$ is defined in \eqref{a19}.
\end{Lem}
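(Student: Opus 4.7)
The plan is to show that both descriptions of $C_+$ coincide with the $H_p$-invariant Lagrangian consisting of points lying on an outgoing bicharacteristic with asymptotic direction $\theta$. For fixed $\theta$, write $\Lambda_\theta^{\Phi} := \{(x, \partial_x \Phi_+(x, \sqrt{2E_0}\,\theta))\}$ for $x$ in the outgoing region $\Gamma_+(R_0, d_0, \sigma_0)$. The eikonal equation \eqref{a14} gives $\Lambda_\theta^{\Phi} \subset p^{-1}(E_0)$, and since $\Lambda_\theta^{\Phi}$ is Lagrangian and contained in a regular level set of $p$, the Hamilton field $H_p$ is tangent to it, so $\Lambda_\theta^{\Phi}$ is flow-invariant. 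Moreover, by \eqref{a17} one has $\partial_x \Phi_+(x, \sqrt{2E_0}\,\theta) \to \sqrt{2E_0}\,\theta$ as $|x| \to \infty$ in the outgoing direction (using $\rho > 1$), so any trajectory in $\Lambda_\theta^{\Phi}$ starting on $\supp(g_+)$ has asymptotic outgoing direction exactly $\theta$.

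The heart of the proof is to identify $-\partial_\theta \Phi_+$ with $-\sqrt{2E_0}\,z_+$. Along any trajectory $(x(t), \xi(t))$ in $\Lambda_\theta^{\Phi}$, using $\dot{x}(t) = \partial_x \Phi_+$ and then the eikonal equation \eqref{a14}, I would compute
\begin{equation*}
\frac{d}{dt}\partial_\xi \Phi_+(x(t), \sqrt{2E_0}\,\theta) = \sum_j (\partial_{x_j}\partial_\xi \Phi_+)\cdot \partial_{x_j}\Phi_+ = \partial_\xi\left(\tfrac{1}{2}|\nabla_x \Phi_+|^2\right)\Big|_{\xi = \sqrt{2E_0}\,\theta} = \sqrt{2E_0}\,\theta .
\end{equation*}
Consequently, $\partial_\xi \Phi_+(x(t), \sqrt{2E_0}\,\theta) - \sqrt{2E_0}\,\theta\, t$ is constant along the trajectory. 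Letting $t \to +\infty$, the estimate $\partial_\xi \Phi_+(x, \xi) - x \in A_{1-\rho}(\Gamma_+)$ (which is $o(1)$ at infinity since $\rho > 1$), together with the asymptotic $x(t) = \sqrt{2E_0}\,\theta\, t + \tilde{z}_+ + o(1)$, forces this constant to equal the asymptotic position $\tilde{z}_+$ of the trajectory.

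To convert to the spherical derivative, I would observe that $\partial_\theta \Phi_+(x, \sqrt{2E_0}\,\theta) \in T^*_\theta \S^{n-1}$ acts on $v \in T_\theta \S^{n-1} \simeq \theta^\perp$ by $\sqrt{2E_0}\,\partial_\xi \Phi_+(x, \sqrt{2E_0}\,\theta) \cdot v$; hence, under the identification $T^*_\theta \S^{n-1} \simeq \theta^\perp$, one has $\partial_\theta \Phi_+ = \sqrt{2E_0}\,\Pi_{\theta^\perp}(\tilde{z}_+) = \sqrt{2E_0}\,z_+$, with $z_+$ the impact factor defined in \eqref{a49}. This yields $-\partial_\theta \Phi_+ = -\sqrt{2E_0}\,z_+$, matching the fourth component in the Lemma. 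The reverse inclusion follows because any bicharacteristic $\gamma_+(t, z_+, \sqrt{2E_0}\,\theta)$ with asymptotic direction $\theta$ eventually enters the outgoing region $\Gamma_+$, where the Lagrangian generated by the Isozaki--Kitada phase is uniquely determined by its plane-wave behavior at infinity, so the trajectory meets $\Lambda_\theta^{\Phi}$ and flow-invariance forces it to sit inside. The main obstacle will be executing the geometric bookkeeping on the sphere correctly (distinguishing $\partial_\theta$ from $\partial_\xi$, and tracking the projection $\Pi_{\theta^\perp}$), and checking that the Isozaki--Kitada estimate on $\Phi_+ - \langle x, \xi\rangle$ indeed yields the required $o(1)$ behavior for $\partial_\xi \Phi_+ - x$ despite the $\xi$-derivative not improving the $\langle x\rangle$-decay.
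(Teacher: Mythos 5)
Your proposal is correct and follows essentially the same route as the paper: flow-invariance of $\{(x,\partial_x\Phi_+(x,\sqrt{2E_0}\,\theta))\}$ from the $x$-differentiated eikonal equation, a conservation law for the $\theta$-derivative along the flow from the $\xi$-differentiated eikonal equation, and evaluation of the limit as $t\to+\infty$ via the Isozaki--Kitada estimate \eqref{a17}. The only (cosmetic) difference is that you track the full gradient $\partial_\xi\Phi_+$, which grows linearly as $\sqrt{2E_0}\,\theta\,t + x_\infty$, and project onto $\theta^\perp$ at the end, whereas the paper works directly with the tangential derivative $\partial_\theta\Psi_+$, which is exactly constant along the trajectory.
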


\begin{proof} We set 
\begin{equation*}
\Psi_{+} (x, \theta ) = \Phi_{+} (x,\sqrt{2E_0} \theta ).
\end{equation*}
Let $x$ be such that $(x, \sqrt{2 E_0} \theta ) \in \Gamma_{+} (3R_0, d_1, \sigma_1)$. We denote
\begin{equation} \label{a20}
( y (t,x, \theta ) , \eta (t, x, \theta ) ) = \exp (t H_{p} ) ( x , \partial_{x} \Psi_{+} (x, \theta ) ) .
\end{equation}
Remark that $(y (t,x, \theta ) , \sqrt{2 E_{0}} \theta )$ stays in $\Gamma_{\pm}(R_{0}, d_{0},  \sigma_{0} )$ for all $t \geq 0$ and then the following limits exist
\begin{equation} \label{a18}
\left\{ \begin{aligned}
&\lim_{t \to + \infty} \eta (t,x, \theta ) = \eta_{\infty} \in \sqrt{2 E_{0}} \S^{n-1}   \\
&\lim_{t \to + \infty}  y  (t,x, \theta ) - t \eta_{\infty} = y_{\infty} .
\end{aligned} \right.
\end{equation}
By \eqref{a14} we have
\begin{equation}\label{trucmachin}
\frac{1}{2} \vert\partial_{x}\Psi_{\pm}(x,  \theta)\vert^{2}+V(x)=\frac{1}{2} E_0.
\end{equation}
Differentiating with respect to $x$ we obtain
\begin{equation*}
( \partial_{x, x}^{2} \Psi _+) (x,  \theta ) ( \partial_{x} \Psi_+ ) (x, \theta ) + ( \partial_{x} V ) (x)= 0.
\end{equation*}
Therefore, the Hamiltonian flow $H_{p}$ is tangent to $\{ (x, \partial_{x} \Psi_+(x, \theta) ) ; \ x \in \R^{n} \}$ and then
\begin{equation} \label{a15}
\eta (t, x, \theta ) = ( \partial_{x} \Psi_{+} ) (y (t,x, \theta ) ,  \theta ),
\end{equation}
for all $t \geq 0$. In particular, from \eqref{a17},
\begin{align}
\eta_{\infty} =& \lim_{t \to + \infty} \eta (t, x, \theta ) = \lim_{t \to + \infty} \sqrt{2 E_{0}} \theta + {\mathcal O} \big( \vert y (t,x, \theta ) \vert^{-\rho} \big)  \nonumber \\
=& \sqrt{2 E_{0}} \theta . \label{a21}
\end{align}

On the other hand, differentiating \eqref{a14} with respect to $\theta$, we get
\begin{equation} \label{a16}
( \partial_{x, \theta}^{2} \Psi _+) (x,  \theta ) ( \partial_{x} \Psi_+ ) (x,  \theta )= 0.
\end{equation}
Using \eqref{a15} and \eqref{a16}, we obtain
\begin{align*}
\partial_{t} ( \partial_{\theta} \Psi_+ ) ( y (t,x, \theta ) , \theta ) =& ( \partial_{x, \theta}^{2} \Psi_+) ( y (t,x, \theta ) , \theta ) \partial_{t} y (t,x, \theta )  \\
=& ( \partial_{x, \theta}^{2} \Psi_+ ) ( y (t,x, \theta ) ,  \theta ) \eta (t,x, \theta )   \\
=& ( \partial_{x, \theta}^{2} \Psi_+ ) ( y (t,x, \theta ) , \theta ) ( \partial_{x} \Psi_{+}) (y (t,x, \theta ) ,  \theta ) \\
=& 0.
\end{align*}
Now \eqref{a17} and \eqref{a18} yield
\begin{align}
( \partial_{\theta} \Psi_+ ) ( x , \theta ) =&  \lim_{t \to + \infty} ( \partial_{\theta} \Psi_+) ( y (t,x, \theta ) , \theta )    \nonumber   \\
=& \lim_{t \to + \infty} \sqrt{2 E_{0}} \Pi_{\theta^{\perp}} y (t,x, \theta ) + {\mathcal O} \big( \vert y (t,x, \theta ) \vert^{1-\rho} \big)   \nonumber   \\
=& \sqrt{2 E_{0}} \Pi_{\theta^{\perp}} y_{\infty} , \label{a22}
\end{align}
where $\Pi_{\theta^{\perp}}$ is the orthogonal projection on the hyperplane orthogonal to $\theta$:
\begin{equation*}
\Pi_{\theta^{\perp}} x = x - \<x, \theta \> .
\end{equation*}

Finally, let $(x, \xi , \theta , - \sqrt{2 E_{0}} z_{+} ) \in C_{+}$. The asymptotic momentum and position  \eqref{a18} of the Hamiltonian curve \eqref{a20} were calculated in \eqref{a21} and \eqref{a22}. Then, there exist $t \in \R$ such that
\begin{equation*}
( x , \nabla_{x} \Phi_{+} (x, \sqrt{2 E_{0}} \theta ) ) = \gamma_{+} ( t , \theta ,\Pi_{\theta^{\perp}} y_{\infty} , E_{0} ) ,
\end{equation*}
and, from \eqref{a23}, we conclude
\begin{equation*}
( x , \xi ) = \gamma_{+} ( t , \theta ,z_{+} ,  E_{0} ) .
\end{equation*}
\end{proof}

The same way,
\begin{equation} \label{a28}
{\mathcal M}_{-} \in \mathcal{I}_{h}^{-\frac{2 n +3}{4}} \big( \R^{n} \times \S^{n-1} , C_{-} {}' \big) ,
\end{equation}
with compactly supported symbol (and no phase variable). The canonical relation $C_{-}$ is given by
\begin{equation*}
C_{-} = \big\{ (y , \eta , \omega , - \sqrt{2 E_{0}} z_{-}) ; \ \exists t \in \R ,  \ (y , \eta ) = \gamma_{-} (t , z_{-} , \omega , E_{0} ), \ (y, \omega ) \in \supp ( g_{-} ) + B (0, \varepsilon ) \big\} .
\end{equation*}

Let now $( \theta^{0} , z_{+}^{0} , \omega^{0} , z_{-}^{0} ) \in \widetilde{\Lambda_{+}^{\infty} \times \Lambda_{-}^{\infty}}$ be as in Theorem \ref{Thsa}. Let $\beta_{\pm} \in C_{0}^{\infty} ( T^{*} \S^{n-1} )$ with $\beta_{+}$ (resp. $\beta_{-}$) supported in a small neighborhood of $( \theta^{0} , z_{+}^{0} )$ (resp. $( \omega^{0} , z_{-}^{0} )$) and equal to $1$ near $( \theta^{0} , z_{+}^{0} )$ (resp. $( \omega^{0} , z_{-}^{0} )$). From \eqref{Gu76_01} and \eqref{a24}, we have
\begin{equation*}
\Op ( \beta _{+} ) \CS (E,h) \Op ( \beta _{-} ) = c_{1}\Op ( \beta _{+} ) {\mathcal M}_{+}^{*} \CR(E+i0) {\mathcal M}_{-} \Op ( \beta _{-} ) +R,
\end{equation*}
where $R = {\mathcal O}(h^{\infty})$ in ${\mathcal B} (L^{2} , L^{2} )$. Let now $\alpha_{\pm} \in C^{\infty}_{0} (T^{*} \R^{n} )$ supported near
\begin{equation*}
N_{\pm} = C_{\pm} \circ \supp ( \beta_{\pm}) \bigcap \big( \Pi_{x} \supp g_{\pm} \times \R^{n} \big) ,
\end{equation*}
and equal to $1$ near this set. Then, the composition rules for $h$-FIOs implies
\begin{equation}  \label{a25}
\Op ( \beta _{+} ) \CS (E,h) \Op ( \beta _{-} ) =c_{1}\Op ( \beta _{+} ) {\mathcal M}_{+}^{*} \Op ( \alpha _{+} ) \CR(E+i0) \Op ( \alpha _{-} ) {\mathcal M}_{-} \Op ( \beta _{-} ) +R,
\end{equation}
where $R = {\mathcal O}(h^{\infty})$ in ${\mathcal B} (L^{2} , L^{2} )$.

Note that $N_{+}$ is arbitrary close to
\begin{equation*}
N_{+}^{0} = \{ \gamma_{+} ( t , \theta^{0},z_{+}^{0} , E_{0} ) ; \ t \in \R \} \bigcap \big( \Pi_{x} \supp g_{+} \times \R^{n} \big) ,
\end{equation*}
and $N_{-}$ is arbitrary close to
\begin{equation*}
N_{-}^{0} = \{ \gamma_{-} ( t , \omega^{0} , z_{-}^{0} , E_{0} ) ; \ t \in \R \} \bigcap \big( \Pi_{x} \supp g_{-} \times \R^{n} \big) .
\end{equation*}
Every $( \rho_{+} , \rho_{-} ) \in N^{0}_{+} \times N^{0}_{-}$ is in $\widetilde{\Lambda_{+} \times \Lambda_{-}}$ because $( \theta^{0} , z_{+}^{0} , \omega^{0} , z_{-}^{0} ) \in \widetilde{\Lambda_{+}^{\infty} \times \Lambda_{-}^{\infty}}$. Up to a finite summation in $( \rho_{+} , \rho_{-} )$ since $\Pi_{x} \supp g_{\pm}$ is compact, we can assume that $\alpha_{\pm}$ is localized in a small neighborhood of such $( \rho_{+} , \rho_{-} )$. To prove the theorem, we will compose the $h$-FIOs appearing in the formula \eqref{a25}.

The manifold $( \Lambda_{+} \times \Lambda_{-} ) \times C_{-}$ intersects $T^{*} \R^{n} \times \diag ( T^{*} \R^{n} \times T^{*} \R^{n} ) \times T^{*} \S^{n-1}$ cleanly with excess $e=1$ and
\begin{equation*}
( \Lambda_{+} \times \Lambda_{-} ) \circ C_{-} \subset \Lambda_{+} \times \Lambda_{-}^{\infty} .
\end{equation*}
Then the composition rules between the $h$-FIOs 
\begin{equation*}
\Op ( \alpha _{+} ) \CR(E+i0) \Op ( \alpha _{-} ) \in \CI_h^{1-\frac{\sum_{j=1}^{n}\lambda_{j}}{2\lambda_1}} \big( \R^{n} \times \R^{n} , \Lambda_+\times \Lambda_- {}' \big),
\end{equation*}
with compactly supported symbol (see  Theorem \ref{ThRes}), and
\begin{equation*}
{\mathcal M}_{-} \Op ( \beta _{-} ) \in \mathcal{I}_{h}^{-\frac{2 n +3}{4}} \big( \R^{n} \times \S^{n-1} , C_{-} {}' \big) ,
\end{equation*}
with compactly supported symbol (see \eqref{a28}), gives
\begin{equation} \label{a30}
\Op ( \alpha _{+} ) \CR(E+i0) \Op ( \alpha _{-} ) {\mathcal M}_{-} \Op ( \beta _{-} ) \in \mathcal{I}_{h}^{\frac{3 - 2 n}{4} - \frac{\sum_{j=1}^{n}\lambda_{j}}{2\lambda_1}} \big( \R^{n} \times \S^{n-1} , \Lambda_{+} \times \Lambda_{-}^{\infty} {}' \big) ,
\end{equation}
with compactly supported symbol.

But now, taking the adjoint of \eqref{a27}, we obtain 
\begin{equation} \label{a29}
\Op ( \beta _{+} ) {\mathcal M}_{+}^{*} \in \mathcal{I}_{h}^{-\frac{2 n +3}{4}} \big( \S^{n-1}  \times \R^{n} , C_{+}^{-1} {}' \big) ,
\end{equation}
with compactly supported symbol. Here
\begin{equation*}
C_{+}^{-1} = \big\{ ( \theta , z , x , \xi ) ; \  (x, \xi , \theta , z ) \in C_{+} \big\}.
\end{equation*}
The manifold $C_{+}^{-1} \times ( \Lambda_{+} \times \Lambda_{-}^{\infty} )$ intersects $T^{*} \S^{n-1} \times \diag ( T^{*} \R^{n} \times T^{*} \R^{n} ) \times T^{*} \S^{n-1}$ cleanly with excess $e=1$ and
\begin{equation*}
C_{+}^{-1} \circ ( \Lambda_{+} \times \Lambda_{-}^{\infty} ) \subset \Lambda_{+}^{\infty} \times \Lambda_{-}^{\infty} .
\end{equation*}
Then \eqref{a25} and the composition rules between the $h$-FIOs \eqref{a30} and \eqref{a29} gives
\begin{equation}
\Op ( \beta _{+} ) \CS (E,h) \Op ( \beta _{-} ) \in \mathcal{I}_{h}^{\frac{1}{2} - \frac{\sum_{j=1}^{n}\lambda_{j}}{2\lambda_1}} \big( \S^{n-1} \times \S^{n-1} , \Lambda_{+}^{\infty} \times \Lambda_{-}^{\infty} {}' \big) ,
\end{equation}
and this statement is Theorem \ref{Thsa}.

\Subsection{Proof of Theorem \ref{nontrappedcase}}
\label{secntc}

We explain briefly  how to obtain from the preceding arguments the structure of the scattering matrix given in Theorem \ref{nontrappedcase}. It is clear that \eqref{a25} holds also in the present case, and we have first to analyze the structure of the resolvent $\CR(E+i0)$, or more precisely that of
\begin{equation*}
\Op ( \alpha _{+} ) \CR(E+i0) \Op ( \alpha _{-} ),
\end{equation*}
where $\alpha _{\pm}\in C^\infty_{0}(T^*\R^n)$ are now microlocally supported respectively near $\rho_{-}\in p^{-1}(E_{0})$ and $\rho_{+}=\exp (TH_{p})(\rho_{-})$ for some given $T$.

As in Proposition \ref{resFIO}, one can see that 
\begin{equation*}
\Op ( \alpha _{+} ) \CR(E+i0) \Op ( \alpha _{-} )=\Op ( \alpha _{+} )\intÊ\chi(t)e^{-it(P-E)/h} d t \Op ( \alpha_{-} ) +R,
\end{equation*}
with $\Vert R\Vert_{\CB(L^2,L^2)}=\CO(h^\infty)$, for some $\chi\in C^\infty_{0}(]0,2T[)$.
From Lemma \ref{lem2} (see also Remark \ref{a13}), we then know that
\begin{equation}\label{a51}
\Op ( \alpha _{+} ) \CR(E+i0) \Op ( \alpha _{-} )\in \CI_{h}^{1/2}(\R^{n} \times \R^{n} ,\Lambda (E_{0})'),
\end{equation}
where
\begin{equation*}
\Lambda (E_{0}) =\{(x,\xi,y,\eta)\in T^*\R^n\times T^*\R^n ; \  p(x,\xi) = E_{0} , \  \exists t \in \R, \  (x,\xi)=\exp (t H_{p})(y,\eta)\}.
\end{equation*}
The scattering matrix is given by \eqref{a25}. Proceeding as in the previous section and using the fact that
\begin{equation*}
C_{+}^{-1}\circ\Lambda(E_{0})\circ C_{-} \subset \CS\CR(E_{0}) ,
\end{equation*}
we obtain the theorem.

\section{Microlocal representation of the scattering amplitude}\label{Smsa}

Here we discuss the representation of the scattering amplitude as an 
oscillatory integral implied by Theorem \ref{Thsa}.
We also show that this leads to an integral kernel representation of 
the scattering amplitude.

For $\alpha\in\mathbb{S}^{n-1}$ we define the Lagrangian submanifolds 
$\Lambda_{\alpha}^{\pm}\subset T^{*}\mathbb{R}^{n}$ by
\begin{equation*}
\Lambda_{\alpha}^{\pm}= \big\{ \rho\in T^{*}\mathbb{R}^{n}; \ \lim_{t\to\pm\infty} \xi(t, \rho) = \sqrt{2E_0} \alpha \big\} ,
\end{equation*}
and the (modified) actions $S_{\pm}$ over the trajectories
$\gamma_{\pm}=\left(x_{\pm}, \xi_{\pm}\right)\subset\Lambda_{\pm}$ as
\begin{equation}\label{ma}
S_{\pm}=\int_{-\infty}^{\infty} \vert \xi_{\pm}(t) \vert^2-2E_0 
1_{\pm t>0} d t .
\end{equation}
We now have the following

\begin{Lem}\sl \label{ls}
Let $\omega_0, \theta_0\in\mathbb{S}^{n-1}$ be such that $\Lambda_{\omega_{0}}^{-}$ intersects $\Lambda_-$ transversally in $p^{-1} (E_{0})$ and $\Lambda_{\theta_{0}}^{+}$ intersects $\Lambda_+$ transversally in $p^{-1} (E_{0})$. Then
\begin{enumerate}[i)]
\item there exist open sets $O^{\pm}\subset\mathbb{S}^{n-1}$ with $\omega_0\in O^{-}$ and $\theta_0\in O^{+}$ 
such that for every $\omega\in O^{-}$ and every $\theta\in O^{+}$ the intersections of $\Lambda_-$ with 
$\Lambda_{\omega}^{-}$ and of $\Lambda_+$ with $\Lambda_{\theta}^{+}$ are transverse in $p^{-1} (E_{0})$. \label{a37}

\item there exist numbers $N_{\pm}\in\mathbb{N}$ such that for every $\omega\in O^-$ there are exactly 
$N_-$ trajectories $\gamma_{-}^{k} ( \omega )$ in $\Lambda_-$ with initial direction $\omega$ and for every $\theta\in O^+$ there are exactly $N_+$ trajectories $\gamma_{+}^{\ell} ( \theta )$ in $\Lambda_{+}$ with final direction $\theta$. \label{a38}

\item for $k\in\{1, \dots, N_{-} \}$ and $\ell \in\{1, \dots, N_{+} \}$, let $z_{-}^{k} ( \omega )$ and $z_{+}^{\ell} ( \theta )$ be the impact parameters of the curves $\gamma_{-}^{k} ( \omega )$ and $\gamma_{+}^{\ell} ( \theta )$ defined in \eqref{a49}. Then $\omega \to z_{-}^{k} ( \omega )$ and $\theta \to z_{+}^{\ell} ( \theta )$ are $C^{\infty}$ functions in $O^{\pm}$. \label{a39}
\end{enumerate}
\end{Lem}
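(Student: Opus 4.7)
I treat the three parts via a single Poincaré-section argument; the $\theta$-side is symmetric by time reversal and I only discuss the $\omega$-side. At the hyperbolic fixed point $(0,0)$, the tangent to $\Lambda_-$ is the stable subspace of $d_{(0,0)} H_p$, namely $\{\xi_j = -\lambda_j x_j\}$, so the $x$-projection $\pi_x : \Lambda_- \to \R^n$ is a local diffeomorphism there. Fix $\varepsilon > 0$ small and set $\Sigma_\varepsilon = \pi_x^{-1}(\{|x| = \varepsilon\}) \subset \Lambda_-$: this is a compact $(n-1)$-sphere transverse to $H_p$ since $\langle \xi, x\rangle \sim -\lambda |x|^2$ on $\Lambda_-$ near $(0,0)$. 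By assumption {\it (A2)}, every orbit of $H_p$ in $\Lambda_- \setminus \{(0,0)\}$ tends to $(0,0)$ as $t \to +\infty$ and leaves every compact set as $t \to -\infty$, so it crosses $\Sigma_\varepsilon$ exactly once. Hence non-stationary trajectories in $\Lambda_-$ with initial direction $\omega$ are in bijection with $A(\omega) := \Sigma_\varepsilon \cap \Lambda_\omega^-$.

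For parts \eqref{a37} and \eqref{a38}, the transversality assumption in $p^{-1}(E_0)$ and the dimension count $n + n - (2n - 1) = 1$ yield that $\Lambda_- \cap \Lambda_{\omega_0}^-$ is an $H_p$-invariant $1$-submanifold, so $A(\omega_0)$ is discrete; contained in the compact set $\Sigma_\varepsilon$, it is finite of some cardinality $N_-$. The implicit function theorem applied at each of these $N_-$ transverse intersection points produces, for $\omega$ near $\omega_0$, a unique nearby transverse intersection point, which both establishes the persistence of transversality on a small neighborhood $O^-$ (part \eqref{a37}) and gives at least $N_-$ branches for every $\omega \in O^-$. For the upper bound, suppose a sequence $\omega_n \to \omega_0$ admitted extra points $\rho_n \in A(\omega_n)$ separated from the $N_-$ persistent branches. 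Compactness of $\Sigma_\varepsilon$ gives a subsequential limit $\rho_\infty$; continuity of $\omega \mapsto \Lambda_\omega^-$ on compact subsets of $T^*\R^n$ (which follows from smooth dependence of the incoming Lagrangian on its asymptotic direction) forces $\rho_\infty \in A(\omega_0)$ and contradicts the separation. So $|A(\omega)| = N_-$ on $O^-$, proving \eqref{a38}.

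For part \eqref{a39}, the same implicit function theorem provides $C^\infty$ branch maps $\omega \mapsto \rho_-^k(\omega) \in \Sigma_\varepsilon$, whence $\gamma_-^k(\omega, t) = \exp(t H_p)(\rho_-^k(\omega))$ is smooth in $(\omega, t)$ on every compact set. Smoothness of the asymptotic data in \eqref{a50} then uses the short-range decay $\rho > 1$ in \eqref{potential}: writing $\xi_\infty(\omega) = \xi(T, \omega) - \int_{-\infty}^T \partial_x V(x(s, \omega))\, ds$ for large negative $T$, the integrand and all its $\omega$-derivatives are dominated by $\langle s\rangle^{-1-\rho}$ along backward escaping orbits, so one differentiates under the integral. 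A similar argument controls $x_\infty(\omega)$, and the formula \eqref{a49} delivers $z_-^k(\omega) \in C^\infty(O^-, \R^{n-1})$.

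The main obstacle is to ensure that the count $N_-$ does not jump as $\omega$ varies, since a priori extra trajectories could appear ``from infinity''. The Poincaré-section trick localizes the counting problem inside the compact manifold $\Sigma_\varepsilon \subset \Lambda_-$, where transverse intersections of smoothly varying submanifolds contribute a locally constant number of points, and where continuity of $\omega \mapsto \Lambda_\omega^-$ combined with compactness rules out ``new'' intersections escaping from the persistent branches.
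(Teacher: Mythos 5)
Your argument is correct and follows essentially the same route as the paper: transversality plus the implicit function theorem give smooth, persistent branches of the intersection, and a compactness argument on $\Lambda_{\pm}\cap\{|x|=\varepsilon\}$ (your Poincar\'e section $\Sigma_{\varepsilon}$) yields finiteness and local constancy of the count, with your explicit sequential-compactness step ruling out extra branches being exactly the ``compactness argument'' the paper alludes to. The only cosmetic difference is in part iii), where you re-derive the smoothness of the asymptotic data $(x_{\infty},\xi_{\infty})$ by differentiating under the integral using the short-range decay, whereas the paper simply invokes the smoothness of the classical asymptotic map \eqref{a47}.
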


We can now define the open sets 
\begin{align*}
&\Lambda_{S_{-}^{k}}= \big\{ \big( \omega, - \sqrt{2E_{0}} z_{-}^{k} ( \omega ) \big) \in T^*\S^{n-1};\ \omega \in O^{-} \big\} \subset \Lambda_{-}^{\infty} , \\
&\Lambda_{S_{+}^{\ell}} = \big\{ \big( \theta, - \sqrt{2E_{0}} z_{+}^{\ell} ( \theta ) \big) \in T^*\S^{n-1} ; \ \theta \in O^{+} \big\} \subset \Lambda_{+}^{\infty} .
\end{align*}
Of course, the restrictions to $\Lambda_{S_{-}^{k}}$ and to $\Lambda_{S_{+}^{\ell}}$ of the projection $\pi: T^{*}\mathbb{S}^{n-1} \to \mathbb{S}^{n-1}$ are diffeomorphisms.

\begin{proof}
Let $\rho_{0} \in \Lambda_{+} \cap \Lambda^{+}_{\theta_{0}}$. Then, there exists a $C^{\infty}$ function $f : p^{-1}(E_{0}) \to \R^{n-1}$ defined locally near $\rho_{0}$ such that, for $\rho$ near $\rho_{0}$,
\begin{equation*}
\rho \in \Lambda_{+} \Longleftrightarrow f ( \rho ) =0 ,
\end{equation*}
and the differential of $f$ is of maximal rank.
The same way, since $\Lambda^{+}_{\theta}$ depend smoothly on $\theta$, there exists a $C^{\infty}$ functions $g : p^{-1}(E_{0}) \times \S^{n-1} \to \R^{n-1}$ such that
\begin{equation*}
\rho \in \Lambda^{+}_{\theta} \Longleftrightarrow g ( \rho , \theta ) =0 .
\end{equation*}
and the differential, with respect to $\rho$, of $g$ is of maximal rank.

Now we define
\begin{equation*}
\begin{aligned}
F : \\
{}^{}
\end{aligned}
\left\{ \begin{aligned}
&p ^{-1} (E_{0}) \times \S^{n-1} \longrightarrow  &&\R^{2n-2} \\
&( \rho , \theta ) &&(f (\rho )  , g ( \rho , \theta ) ) 
\end{aligned} \right.
\end{equation*}
and we note that
\begin{equation*}
\rho \in \Lambda_{+} \cap \Lambda^{+}_{\theta} \Longleftrightarrow F ( \rho , \theta ) =0 .
\end{equation*}
Since the intersection $\Lambda_{+} \cap \Lambda^{+}_{\theta_{0}}$ is transverse, the differential of $F$, with respect to $\rho$, is of maximal rank for $\theta = \theta_{0}$. By continuity, this property remains true for $\theta$ near $\theta_{0}$ and {\sl \ref{a37})} follows.

In particular, up to a reordering of the coordinates, we can assume that $d_{\rho '} F ( \rho , \theta )$ is invertible for $( \rho , \theta )$ in a neighborhood of $( \rho_{0} , \theta_{0} )$. Here $\rho '$ denotes the $2n-2$ variables $( \rho_{2} , \ldots , \rho_{2n-1} )$. Then, by the implicit function theorem, there exist a $C^{\infty}$ function $G : \R \times \S^{n-1} \to \R^{2n-2}$ such that
\begin{equation*}
\rho \in \Lambda_{+} \cap \Lambda^{+}_{\theta} \Longleftrightarrow \rho ' = G ( \rho_{1} , \theta ) .
\end{equation*}
Thus, for $\theta$ fixed, $\Lambda_{+} \cap \Lambda^{+}_{\theta}$ is locally a one dimensional manifold. Since $\Lambda_{+} \cap \Lambda^{+}_{\theta}$ is necessarily stable by the Hamiltonian flow, $\Lambda_{+} \cap \Lambda^{+}_{\theta}$ is locally a unique Hamiltonian curve and
\begin{equation*}
\rho \in \Lambda_{+} \cap \Lambda^{+}_{\theta} \Longleftrightarrow \exists t \in \R, \ \ \rho = \exp (t H_{p}) ( \rho_{0,1} , G ( \rho_{0,1} , \theta ) ) ,
\end{equation*}
locally near $\rho_{0}$ (here, $\rho_{0,1}$ can be replaced by any real close to this value). Then {\sl \ref{a38})} follows from a compactness argument on $\Lambda_{+} \cap \{ \vert x \vert = \varepsilon \}$.

Let now $z_{+}^{\ell} ( \theta )$ be the impact parameter of the trajectory $t \mapsto \exp (t H_{p}) ( \rho_{1 ,0} , G ( \rho_{1 ,0} , \theta ) )$ defined in \eqref{a49}. From \eqref{a47} and the fact that $G$ is smooth, $z_{+}^{\ell} ( \theta )$ is an $C^{\infty}$ function in $O^{+}$ if $O^{+}$ is a small enough neighborhood of $\theta_{0}$.
\end{proof}

For $m\in\{1, \dots, N_+\}$ or $m\in\{1, \dots, N_-\}$ and $\theta\in 
O^+$ or $\omega\in O^-$ we shall use the superscript $m$ to denote objects 
related to the unique trajectory $\gamma_{\pm}^{m}$ with final 
direction $\theta$ or initial direction $\omega.$
In particular, we let $S_{+}^{m}(\theta),$ $\theta\in O^+,$ denote the 
(modified) action, given by \eqref{ma}, over the $m$-th trajectory 
with final direction $\theta.$
With $S_{-}^{m}(\omega)$ for $\omega\in O^-$ defined mutatis mutandis, we 
now have the following lemma which is analogous to \cite[Lemma 5]{Al06_01} and Equation \eqref{a41}.

\begin{Lem}\sl \label{lukl}
For $m \in\{1, \dots, N_{\pm} \}$, we have $\Lambda_{S_{\pm}^{m}} = \big\{ (\alpha , \pm \partial_{\alpha} S_{\pm}^{m} ( \alpha )); \ \alpha\in O^{\pm} \big\}$.
\end{Lem}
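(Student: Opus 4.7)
The plan is to verify $\Lambda_{S_\pm^m} = \{(\alpha, \pm \partial_\alpha S_\pm^m(\alpha))\}$ by directly differentiating the modified action $S_\pm^m$, in the same spirit as the derivation of \eqref{a41}. Fix a tangent vector $v \in T_\alpha \S^{n-1}$ and write $(x(t), \xi(t)) = \gamma_\pm^m(t, \alpha)$. By \eqref{a19} and \eqref{a47}, this trajectory is parametrized in such a way that
\[
x(t) = \sqrt{2E_0}\, \alpha\, t + z_\pm^m(\alpha) + O(|t|^{1-\rho}), \quad \xi(t) = \sqrt{2E_0}\, \alpha + O(|t|^{-\rho})
\]
as $t \to \pm\infty$, with analogous bounds (at most one extra power of $|t|$) for the $\alpha$-derivatives. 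Since $\rho >1$, this justifies differentiation under the integral sign:
\[
\partial_v S_\pm^m(\alpha) = \int_{-\infty}^{+\infty} 2\,\xi(t)\cdot\partial_v \xi(t) \, dt.
\]

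The key identity comes from Hamilton's equations $\dot x = \xi$, $\dot \xi = -\nabla V(x)$ together with conservation of energy $\xi\cdot\partial_v\xi + \nabla V(x)\cdot \partial_v x = 0$:
\[
\partial_t\bigl(\xi\cdot\partial_v x\bigr) = -\nabla V(x)\cdot\partial_v x + \xi\cdot\partial_v\xi = 2\,\xi\cdot\partial_v\xi,
\]
so that $\partial_v S_\pm^m(\alpha) = [\xi(t)\cdot\partial_v x(t)]_{t=-\infty}^{t=+\infty}$. At the end where the trajectory tends to $(0,0)$, that is $t\to -\infty$ for $+$ and $t\to +\infty$ for $-$, this boundary term vanishes: the vector $\partial_v \gamma_\pm^m(0,\alpha)$ lies in $T\Lambda_\pm$, the linearised flow on $\Lambda_\pm$ is contracting toward the fixed point, so both $\xi(t)$ and $\partial_v x(t)$ decay exponentially.

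At the opposite end the asymptotic expansions above give, using $\alpha\cdot v =0$ because $v\in T_\alpha\S^{n-1}$,
\[
\xi(t)\cdot\partial_v x(t) = 2E_0\, t\,(\alpha\cdot v) + \sqrt{2E_0}\, \alpha\cdot\partial_v z_\pm^m(\alpha) + O(|t|^{1-\rho}) \longrightarrow \sqrt{2E_0}\, \alpha\cdot\partial_v z_\pm^m(\alpha).
\]
Since $z_\pm^m(\alpha)\in\alpha^\perp$, differentiating the identity $\alpha\cdot z_\pm^m(\alpha)=0$ in the direction $v$ yields $\alpha\cdot\partial_v z_\pm^m(\alpha) = -\,v\cdot z_\pm^m(\alpha)$, so the limit at infinity equals $-\sqrt{2E_0}\, v\cdot z_\pm^m(\alpha)$. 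Keeping track of the sign in $[\,\cdot\,]_{-\infty}^{+\infty}$ according to which end is at $(0,0)$, one gets $\partial_v S_\pm^m(\alpha) = \mp\sqrt{2E_0}\, v\cdot z_\pm^m(\alpha)$, that is $\pm \partial_\alpha S_\pm^m(\alpha) = -\sqrt{2E_0}\, z_\pm^m(\alpha)$, which is precisely the defining relation of $\Lambda_{S_\pm^m}$.

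The main technical point is this last boundary analysis: the potentially divergent $t$-linear contribution cancels only because $v$ is tangent to the sphere, and the finite part reorganises into $-\sqrt{2E_0}\, z_\pm^m(\alpha)$ thanks to the orthogonality $z_\pm^m(\alpha)\in\alpha^\perp$ built into the parametrization \eqref{a19}. The other routine verifications---convergence of the action, the right to differentiate under the integral sign, and exponential decay of the linearised flow on $\Lambda_\pm$ toward the fixed point---all follow from the short-range bound $\rho>1$ and from the fact that $0$ is a hyperbolic fixed point of $H_p$ restricted to $\Lambda_\pm$.
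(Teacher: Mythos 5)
Your proof is correct, and it reaches the conclusion by a route that is partly different from the paper's. The shared core is the identity $\partial_t\big(\xi\cdot\partial_v x\big)=\partial_v\big(\vert\xi\vert^2\big)$ obtained from Hamilton's equations and conservation of energy (this is exactly \eqref{a35}/\eqref{a42} in the paper), together with the vanishing of the boundary term at the hyperbolic fixed point because $\xi(t)$ and $\partial_v x(t)$ both decay along $\Lambda_\pm$ as $t\to\mp\infty$. Where you diverge is in how the derivative of the action is tied to the impact parameter: the paper does not differentiate $S_\pm^m$ directly, but instead passes through the Isozaki--Kitada phase $\Psi_+$, quoting the formula $\Psi_+(x_+(t,\theta),\theta)=-S_+^\ell(\theta)+\int_{-\infty}^t\vert\xi_+\vert^2\,ds$ from \cite[Eq.\ (7.11)]{AlBoRa07_01} and combining it with \eqref{a34} (which encodes, via Lemma \ref{a31} and the asymptotics \eqref{a21}--\eqref{a22} of $\partial_\theta\Psi_+$, the relation between the cotangent variable and $-\sqrt{2E_0}z_+$). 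You instead compute $\lim_{t\to\pm\infty}\xi(t)\cdot\partial_v x(t)$ by hand from the free-motion asymptotics, using $\alpha\cdot v=0$ to kill the linearly growing term and $z_\pm^m(\alpha)\in\alpha^\perp$ to convert $\alpha\cdot\partial_v z_\pm^m$ into $-v\cdot z_\pm^m$. This is self-contained (no appeal to the external formula (7.11) or to the phase $\Phi_\pm$), at the cost of redoing the boundary analysis at spatial infinity that the paper has already packaged into Lemma \ref{a31}. The signs in your final bookkeeping are consistent with the definition of $\Lambda_{S_\pm^m}$.

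One quantitative point should be tightened. You state that the $\alpha$-derivatives of the error terms in the asymptotics lose ``at most one extra power of $\vert t\vert$.'' Taken literally, an error $\mathcal{O}(\vert t\vert^{2-\rho})$ in $\partial_v x(t)$ would ruin the convergence of $\xi(t)\cdot\partial_v x(t)$ when $1<\rho<2$. What is actually true --- and what your argument needs --- is that the derivatives of the errors obey the \emph{same} bounds $\mathcal{O}(\vert t\vert^{-\rho})$ and $\mathcal{O}(\vert t\vert^{1-\rho})$ as the errors themselves: each $x$-derivative falling on $V$ gains a power of decay by \eqref{potential}, which exactly compensates the linear growth of $\partial_v x$. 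Equivalently, the convergence in \eqref{a50} is locally $C^1$ with respect to the initial data, which is the content of the smoothness of the map \eqref{a47} and its inverse. With that correction the argument is complete.
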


\begin{proof}
We will only calculate $\Lambda_{S_{+}^{\ell}}$, the case of the manifold $\Lambda_{S_{-}^{k}}$ can be treated the same way. Here, we will use the notation
\begin{equation*}
\left\{ \begin{aligned}
&x_{+}(t, \theta ) = x_{+} ( t , \theta , z_{+}^{\ell} ( \theta))  \\
&\xi_{+}(t, \theta ) = \xi_{+} ( t , \theta , z_{+}^{\ell} ( \theta)) .
\end{aligned} \right.
\end{equation*}
We recall from \cite[Equation (7.11)]{AlBoRa07_01} that 
\begin{align}
\Psi_{+} \big( x_{+} ( t , \theta ) , \theta \big) =& 2 E_{0} t 1_{t>0} - \int_{t}^{+ \infty} \vert \xi_{+} (s, \theta ) \vert^2-2E_0 1_{s>0} \, d s    \nonumber   \\
=& - S_{+}^{\ell} ( \theta ) + \int_{- \infty}^{t} \vert \xi_{+} (s, \theta ) \vert^{2} d s . \label{a32}
\end{align}
From Lemma \ref{a31} and Lemma \ref{ls},
\begin{equation} \label{a34}
\Lambda_{S_{+}^{\ell}} = \big\{ \big( \theta , ( \partial_{\theta} \Psi_{+}) ( x_{+} ( t , \theta ) , \theta ) \big) ; \ \theta \in O^{+} \big\} ,
\end{equation}
for any $t \in \R$. Combining \eqref{a15} and \eqref{a32}, we obtain
\begin{align}
( \partial_{\theta} \Psi_{+}) \big( x_{+} ( t , \theta ) , \theta \big) =& \partial_{\theta} \big( \Psi_{+} ( x_{+} ( t , \theta ) , \theta ) \big) - ( \partial_{x} \Psi_{+} ) ( x_{+} ( t , \theta ) , \theta ) \partial_{\theta} ( x_{+} ( t , \theta ) )   \nonumber \\
=& - \partial_{\theta} S_{+}^{\ell} ( \theta ) + \int_{- \infty}^{t} \partial_{\theta} \big( \vert \xi_{+} (s, \theta ) \vert^{2} \big) d s - \xi_{+} (t, \theta ) \partial_{\theta} ( x_{+} ( t , \theta ) ). \label{a33}
\end{align}

Since the energy is constant on the Hamiltonian curves, we have, as in \eqref{a42},
\begin{align}
\partial_{t} \big( \xi_{+} (t, \theta ) \partial_{\theta} ( x_{+} ( t , \theta) ) \big) =& \xi_{+} (t, \theta ) \partial_{\theta} ( \xi_{+} ( t , \theta ) ) - ( \partial_{x} V) (x_{+} (t, \theta )) \partial_{\theta} ( x_{+} ( t , \theta ) )   \nonumber    \\
=& \frac{1}{2} \partial_{\theta} \big( \vert \xi_{+} (t, \theta ) \vert^{2} \big) - \partial_{\theta} \big( V (x_{+}(t, \theta )) \big)  \nonumber    \\
=& \frac{1}{2} \partial_{\theta} \big( \vert \xi_{+} (t, \theta ) \vert^{2} \big) - \partial_{\theta} \Big( E_{0} -  \frac{1}{2} \vert \xi_{+}(t, \theta ) \vert^{2} \Big)   \nonumber   \\
=& \partial_{\theta} \big( \vert \xi_{+} (t, \theta ) \vert^{2} \big) .  \label{a35}
\end{align}
Moreover, as $t \to - \infty$, we have $\xi_{+} (t, \theta) \to 0$ and
\begin{equation*}
\partial_{\theta} ( x_{+} ( t , \theta) ) = d \Pi_{x} \circ d \exp ( t H_{p} ) \big( \partial_{\theta} x_{+} (0, \theta ) , \partial_{\theta} \xi_{+} (0, \theta ) \big) \longrightarrow 0,
\end{equation*}
since $(  x_{+} (0, \theta ) ,  \xi_{+} (0, \theta ) ) \in \Lambda_{+}$ for all $\theta$ and $0$ is a unstable node of $H_{p}$ restricted to $\Lambda_{+}$. Then, \eqref{a35} yields
\begin{equation*}
\xi_{+} (t, \theta ) \partial_{\theta} ( x_{+} ( t , \theta ) ) = \int_{- \infty}^{t} \partial_{\theta} \big( \vert \xi_{+} (s, \theta ) \vert^{2} \big) d s.
\end{equation*}
Using this equality, the lemma follows from \eqref{a34} and \eqref{a33}.
\end{proof}

We now have the following

\begin{Th}\sl
Let $E=E_0 +hE_1$, with $E_1\in ]-C_{0} , C_{0} [$ for some $C_{0} >0$, and $\omega^{0} , \theta^{0} \in\mathbb{S}^{n-1}$ satisfy $\omega^{0} \ne \theta^{0}$. Then
\begin{enumerate}[i)]
\item for every $( \theta^{0} , z_{+}^{0} , \omega^{0} , z_{-}^{0} ) \in \widetilde{\Lambda_{+}^{\infty} \times \Lambda_{-}^{\infty}}$ there exist $m\in\mathbb{N}$, a symbol $a\in S_{2n+m-2}^{\frac12
-\frac{\sum_{j=1}^{n}\lambda_{j}}{2\lambda_{1}}+\frac{m}{2}}(1)$, and a 
non-degenerate phase function $\varphi\in C^{\infty}(\mathbb{R}^{2 n + m -2})$ such 
that, microlocally near $( \theta^{0} , z_{+}^{0} , \omega^{0} , z_{-}^{0} )$,
\begin{equation*}
{\mathcal{A}(E, h)} (\theta, \omega) = \int_{\mathbb{R}^{m}} 
e^{i \varphi(\theta, \omega, \tau) /h}a(\theta, \omega, \tau ;E,h) \, d \tau .
\end{equation*} \label{a36}
\medskip

\item Assume that $\Lambda_{\omega}^{-}$ intersects $\Lambda_-$ transversely and $\Lambda_{\theta}^{+}$ intersects $\Lambda_+$ transversely. For every $( \theta^{0} , z_{+}^{0} , \omega^{0} , z_{-}^{0} ) \in \widetilde{\Lambda_{+}^{\infty} \times \Lambda_{-}^{\infty}}$, there exists a symbol $a \in S_{2n-2}^{\frac12 - \frac{\sum_{j=1}^{n} \lambda_{j}}{2 \lambda_{1}}} (1)$ such that, microlocally near $( \theta^{0} , z_{+}^{0} , \omega^{0} , z_{-}^{0} )$,
\begin{equation*}
{\mathcal{A}(E, h)} (\theta , \omega ) = e^{i (S_{+} ( \theta ) + S_{-} ( \omega )) /h} a (\theta , \omega ;E,h) ,
\end{equation*}
where $S_{+} ( \theta )$ and $S_{-} ( \omega )$ are the actions defined before Lemma \ref{lukl} associated to the paths in $\Lambda_{+} \cap \Lambda^{+}_{\theta}$ and $\Lambda_{-} \cap \Lambda^{-}_{\omega}$ close to $\gamma_{+} ( t, \theta^{0} , z_{+}^{0} , E_{0} )$ and $\gamma_{-} ( t, \omega^{0} , z_{-}^{0} , E_{0} )$.
\medskip

\item Assume $O^-\cap O^+=\emptyset$ and $\< g_+ ( \rho_+ ), g_- (\rho_- ) \> \ne 0$ for all $\left(\rho_+, 
\rho_-\right)\in \Lambda_+\times\Lambda_-$ such that $\pm \lim_{t\to\pm\infty} \xi \left(t , \rho_{\pm} \right) \in \sqrt{2 E_{0}}O^{\pm}$. Let $N_{\infty}$ be the number of $(\omega, \theta)$-trajectories. For $j\in\{1, \dots, N_{\infty}\}$, $k\in\{1, \ldots , N_{-} \}$ and $\ell \in\{1, \ldots, N_{+} \}$, there exist $m_{j}, m_{k,\ell} \in \N$, non-degenerate phase functions
\begin{equation*}
\varphi_j\in 
C^{\infty}\left(\mathbb{S}^{n-1}\times\mathbb{S}^{n-1}\times\mathbb{R}^{m_{j}}   
\right) \quad \text{and} \quad \varphi_{k, \ell}\in
C^{\infty}\left(\mathbb{S}^{n-1}\times\mathbb{S}^{n-1}\times\mathbb{R}^{m_{k,\ell}}
\right),
\end{equation*}
and symbols
\begin{equation*}
a_j\in S_{2n-2+m_{j}}^{\frac{m_{j}}{2}}(1) \quad \text{and} \quad a_{k, \ell}\in S_{2n-2+m_{k, \ell}}^{\frac{1}{2} -\frac{\sum_{j=1}^{n}\lambda_{j}}{2\lambda_1} +\frac{m_{k, \ell}}{2}}(1) ,
\end{equation*}
such that
\begin{align*}
\mathcal{A}(E, h) ( \theta, \omega )
& =\sum_{j=1}^{N_{\infty}}\int_{\mathbb{R}^{m_{j}}}
e^{i \varphi_{j}(\theta, \omega, \tau) /h} a_{j}(\theta , \omega ,
\tau ;E ,h) \, d\tau\\ 
&\quad\quad +
\sum_{k=1}^{N_{-}}\sum_{\ell=1}^{N_{+}} \int_{\mathbb{R}^{m_{k, \ell}}}e^{i \varphi_{k, \ell} (\theta, \omega, \tau ) /h} a_{k, \ell}(\theta ,\omega , \tau ;E,h) \, d\tau + \mathcal{O}(h^{\infty}).
\end{align*}
\end{enumerate}
\end{Th}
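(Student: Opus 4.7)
The plan is to reduce the three assertions, in order, to already-proven results in the paper: Theorem \ref{Thsa} for part (i), together with a graph description of the Lagrangian for (ii), and a microlocal decomposition that combines (i) with Theorem \ref{nontrappedcase} for (iii). The first step, common to all three parts, is to observe that since $\omega \neq \theta$, the point $( \theta^0, \sqrt{2E_0}z_+^0, \omega^0, \sqrt{2E_0}z_-^0)$ lies off the diagonal $\diag (\S^{n-1}\times\S^{n-1})$, so $K_{\operatorname{Id}}$ is microlocally $0$ there; hence, from $\CT(E,h) = (\operatorname{Id} - \CS(E,h))/(2i\pi)$ and $\CA(E,h) = c_0 K_{\CT(E,h)}$, one gets microlocally
\begin{equation*}
\CA(E,h)(\theta,\omega) = -\frac{c_0}{2i\pi} K_{\CS(E,h)}(\theta,\omega) + \CO(h^{\infty}),
\end{equation*}
with $c_0 = \CO(h^{(n-1)/2})$.

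For part (i), I would invoke Theorem \ref{Thsa} to obtain $\CS(E,h) \in \CI_h^{\frac12 - \sum\lambda_j/(2\lambda_1)}(\S^{n-1}\times\S^{n-1}, \Lambda_+^\infty\times\Lambda_-^\infty{}')$ near the chosen base point, and then appeal to the standard local representation of a semi-classical Lagrangian distribution as an oscillatory integral (Appendix \ref{scanal}): there exist $m\in\N$, a non-degenerate phase function $\varphi(\theta,\omega,\tau)$ parametrizing $\Lambda_+^\infty\times\Lambda_-^\infty{}'$, and a symbol $\tilde a$ such that $K_{\CS} = (2\pi h)^{-(n-1+m)/2} \int e^{i\varphi/h}\tilde a\, d\tau$. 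Multiplying by $c_0/(2i\pi)$ absorbs an $h^{(n-1)/2}$ into the amplitude, producing a symbol $a \in S_{2n+m-2}^{\frac12 - \sum\lambda_j/(2\lambda_1) + m/2}(1)$ as claimed.

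For part (ii), the transversality hypothesis on $\Lambda^\pm_\alpha \cap \Lambda_\pm$ is exactly the content of Lemma \ref{ls}, which in turn makes the projections $\pi:\Lambda_{S_\pm^m}\to\S^{n-1}$ diffeomorphisms. By Lemma \ref{lukl}, the local branches of $\Lambda_\pm^\infty$ are the graphs $\{(\alpha,\pm\partial_\alpha S_\pm^m(\alpha))\}$, so near a single pair $(k,\ell)$ in the decomposition the twisted Lagrangian $\Lambda_+^\infty\times\Lambda_-^\infty{}'$ is a graph over $\S^{n-1}\times\S^{n-1}$. Matching momenta through the twist yields $\partial_\theta \phi = -\sqrt{2E_0}z_+^\ell(\theta) = \partial_\theta S_+^\ell(\theta)$ and $\partial_\omega \phi = \sqrt{2E_0}z_-^k(\omega) = \partial_\omega S_-^k(\omega)$, so the generating function must be $\phi(\theta,\omega) = S_+^\ell(\theta) + S_-^k(\omega)$ modulo an additive constant that can be absorbed into the symbol. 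Since no phase variables are needed, the amplitude is now in $S_{2n-2}^{\frac12 - \sum\lambda_j/(2\lambda_1)}(1)$.

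For part (iii), the assumption $O^-\cap O^+ = \emptyset$ together with $\langle g_+(\rho_+),g_-(\rho_-)\rangle\neq 0$ ensures that \emph{every} $(\omega,\theta)$-configuration producing a trapped pair $(\rho_+,\rho_-)$ lies in $\widetilde{\Lambda_+^\infty\times\Lambda_-^\infty}$, so that part (i) applies at each such pair. One then classifies all trajectories contributing to the scattering amplitude at $(\omega,\theta)$: either they are genuine $(\omega,\theta)$-trajectories (enumerated by $j=1,\dots,N_\infty$, finite by the non-degeneracy assumption, and non-trapping), or they are composites of a $\Lambda_-$-trajectory with initial direction $\omega$ (indexed by $k$) and a $\Lambda_+$-trajectory with final direction $\theta$ (indexed by $\ell$). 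Using a microlocal partition of unity on $T^*(\S^{n-1}\times\S^{n-1})$ supported near the base points of these trajectories, I would apply Theorem \ref{nontrappedcase} to extract the $N_\infty$ terms of order $0$, which give the contributions with symbols $a_j\in S^{m_j/2}_{2n-2+m_j}(1)$, and apply part (i) to each pair $(k,\ell)$, yielding the contributions with symbols $a_{k,\ell}\in S^{\frac12 - \sum\lambda_j/(2\lambda_1) + m_{k,\ell}/2}_{2n-2+m_{k,\ell}}(1)$. The $\CO(h^{\infty})$ remainder collects the microlocal errors. The main obstacle throughout is the bookkeeping in (ii)—checking that the sign twist in $\Lambda_+^\infty\times\Lambda_-^\infty{}'$ combines with the sign in Lemma \ref{lukl} to yield a \emph{sum} $S_+ + S_-$ rather than a difference, and verifying that the constants $c_0$ and the $(2\pi h)^{-(n-1+m)/2}$ normalization combine exactly to shift the order from $s$ (the FIO order of $\CS$) to $s + m/2$ in the amplitude.
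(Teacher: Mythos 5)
Parts \emph{(i)} and \emph{(ii)} of your argument follow the paper's route exactly: Theorem \ref{Thsa} plus the standard oscillatory-integral representation of an $h$-FIO (the paper cites \cite[Theorem 1]{Al05_02} for this) gives \emph{(i)}, and the graph description of $\Lambda_{\pm}^{\infty}$ through Lemma \ref{ls} and Lemma \ref{lukl}, with generating function $S_{+}(\theta)+S_{-}(\omega)$, gives \emph{(ii)}. The order bookkeeping you flag at the end is exactly what Definition \ref{a44} encodes, so nothing is missing there.

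For part \emph{(iii)}, however, there is a genuine gap. A microlocal partition of unity built from compactly supported cutoffs $\Op(\chi_j)$ only controls $\CA(E,h)$ on a compact region of $T^{*}(\S^{n-1}\times\S^{n-1})$; it says nothing about possible contributions at fiber infinity, and without excluding those your ``$\CO(h^{\infty})$ remainder collecting the microlocal errors'' is not justified. The paper's proof of \emph{(iii)} is devoted almost entirely to closing precisely this hole: it shows $WF_{h}^{i}(\CA(E,h))=\emptyset$ by going back to the Isozaki--Kitada representation \eqref{ik}, performing non-stationary-phase integrations by parts on $K_{T_{\pm 1}}$ (using $O^{+}\cap O^{-}=\emptyset$, giving \eqref{nowfit}) and on the $h$-Fourier transform of $G$ (giving \eqref{nowfig}), and handling the residual terms of $T_{2}$ in \eqref{t2} via the operator-norm bounds of Lemma \ref{l2.1RT} together with Lemma \ref{nowfi}, whose proof is itself nontrivial. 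Only after \eqref{a40} is established does the paper conclude that it suffices to know $\CA(E,h)$ microlocally on a compact set, at which point your decomposition into the $N_{\infty}$ non-trapped contributions (Theorem \ref{nontrappedcase}) and the $N_{-}\times N_{+}$ trapped contributions (Theorem \ref{Thsa}) goes through. You would need to supply this $WF_{h}^{i}$ analysis — or an equivalent decay estimate at fiber infinity — before your partition-of-unity step is legitimate.
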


\begin{proof}
{\sl i)} The first part is a direct consequence of Theorem \ref{Thsa} and \cite[Theorem 1]{Al05_02}.

{\sl ii)} The second part follows from Theorem \ref{Thsa}, Lemma \ref{lukl}, and \cite[Theorem~1]{Al05_02}.

{\sl iii)} To establish the last part of the theorem, it  suffices 
to prove that $WF_{h}^{i} ( {\mathcal{A}(E, h)} )$ $= \emptyset$. Recall the representation \eqref{ik} of the scattering amplitude. From \cite[page 166]{RoTa89_01}, we have, in the sense of oscillatory integrals,
\begin{equation*}
K_{T_{\pm 1}} = \int e^{i \psi_{\pm} (\theta , \omega ,x) /h} k_{\pm b} (x, \sqrt{2E} \omega ;h ) \overline{a_{+}} (x, \sqrt{2E} \theta ;h) \, d x ,
\end{equation*}
with $k_{\pm b} = e^{- i \Phi_{\pm} /h} \big( -\frac{h^{2}}{2} \Delta + V -\frac{1}{2} \xi^2 \big) e^{i \Phi_{\pm} /h} b_{\pm} \in A_{-1}$ and $\psi_{\pm} (\theta , \omega ,x) = \Phi_{\pm} (x, \sqrt{2 E} \omega ) - \Phi_{+} (x, \sqrt{2E} \theta )$. Since $O^{+} \cap O^{-} = \emptyset$, there exists $C>0$ such that $\vert \partial_{x} \psi_{\pm} \vert > C$ for $( \theta , \omega ) \in O^{+} \times O^{-}$. Then, integrating by parts with respect to $x$, we see that the distribution $K_{T_{\pm 1}}$ is a $C^{\infty}$ function on $O^{+} \times O^{-}$. Moreover this function and all its derivatives are bounded by ${\mathcal O} (h^{\infty})$. Therefore,
\begin{equation}\label{nowfit}
WF_{h}^{i} \big( {K_{T_{\pm 1}}}_{\vert_{O^{+} \times O^{-}}} \big) = \emptyset .
\end{equation}

From \eqref{sared}, it is clear that $(\theta , \omega ) \mapsto G ( \theta , \omega )$ is $C^{\infty}$ with respect to $(\theta , \omega )$. In some coordinate chart and for any $f_{+} ( \theta )$,  $f_{-} ( \omega )$ in $C^{\infty}_{0} ( \R^{n-1})$ supported in this chart, we have
\begin{equation*}
\big\vert \big( {\mathcal F}_{h} ( f_{+} f_{-} G ) \big) ( \xi , \eta ) \big\vert = \Big\vert \iiint e^{- i ( \xi \theta + \eta \omega )/h} e^{- i \Phi_{+} ( x, \sqrt{2E_{0}} \theta )/h} f_{+} \overline{g_{+}} \CR \big( e^{i \Phi_{-} ( y, \sqrt{2E_{0}}\omega )/h} f_{-} g_{-} \big) \, d x \, d \theta \, d \omega \Big\vert ,
\end{equation*}
for $\xi , \eta \in \R^{n-1}$. For $\vert \xi \vert$ large enough, we have
\begin{equation*}
\vert \partial_{\theta} ( \xi \theta +\Phi_{+} ( x, \sqrt{2E_{0}} \theta ) ) \vert \gtrsim \< \xi \> ,
\end{equation*}
on the support of $\overline{g_{+}}$. The same way, for $\vert \eta \vert$ large enough, we have
\begin{equation*}
\vert \partial_{\omega} ( \eta \omega - \Phi_{-} ( y, \sqrt{2E_{0}}\omega ) ) \vert \gtrsim \< \eta \>,
\end{equation*}
on the support of $g_{-}$. Then, performing integrations by parts with respect to $\theta$ or $\omega$, we obtain $\vert ( {\mathcal F}_{h} (f_{+} f_{-} G) ) ( \xi , \eta ) \vert = {\mathcal O} ( h^{\infty} \< \xi , \eta \>^{- \infty} )$ for $\< \xi , \eta \>$ large enough, and then
\begin{equation}\label{nowfig}
WF_{h}^{i}\left(G\right)=\emptyset.
\end{equation}

To treat, now, the terms in \eqref{t2} containing the operators whose norms are estimated 
in Lemma \ref{l2.1RT}, we use the following lemma, the proof 
of which we present later.

\begin{Lem}\sl \label{nowfi}
Let $T \in \mathcal{B} (L^{2}_{-\gamma}(\mathbb{R}^{n}), L^{2}_{\gamma}(\mathbb{R}^{n}) )$ satisfy $\Vert T \Vert_{\mathcal{B} (L^{2}_{-\gamma} , L^{2}_{\gamma} )} = \mathcal{O} (h^{\infty})$ for all $\gamma\gg 1$ and let $E >0$. Then
\begin{equation*}
WF_{h}^{i} \big( K_{F_{0}(E, h)T F_{0}^{*}(E, h)} \big) = \emptyset.
\end{equation*}
\end{Lem}

From \eqref{ik}, \eqref{nowfit}, \eqref{t2}, \eqref{nowfig}, Lemma \ref{l2.1RT}, and Lemma 
\ref{nowfi} we now conclude that
\begin{equation} \label{a40}
WF_{h}^{i} ( \mathcal{A}(E, h) )= \emptyset.
\end{equation}

For $j \in\{1, \dots, N_{\infty}\}$ we now let $\CS\CR_{j} (E_0)$ denote the scattering relation near the $j$-th $(\omega, \theta )$-trajectory, defined in \eqref{a52} and indicated in Figure \ref{fig:sr}.
From Theorem \ref{nontrappedcase},
\begin{equation*}
\CS (E,h)\in\mathcal{I}_{h}^{0}\left(\mathbb{S}^{n-1}\times\mathbb{S}^{n-1},
\CS\CR_{j} (E_{0} ) '\right),
\end{equation*}
microlocally near the limit points of the $j$-th $(\omega, \theta )$-trajectory. From \eqref{a40}, it is enough to know the scattering amplitude microlocally in a compact set. Then, the conclusion of the theorem follows from these observations, \eqref{a53}, Theorem \ref{Thsa} and \cite[Theorem 1]{Al05_02}.
\end{proof}

\begin{proof}[Proof of Lemma \ref{nowfi}]
In some coordinate chart and for any $f_{+} ( \theta )$,  $f_{-} ( \omega )$ in $C^{\infty}_{0} ( \R^{n-1})$ supported in this chart, we have
\begin{align*}
K ( \xi , \eta ) =& \big( {\mathcal F}_{h} K_{f_{+} F_{0} (E ,h) T F_{0}^{*}(E , h) f_{-}} \big) ( \xi , \eta )   \\
=& c_{2} \iiint e^{ - i ( \theta \xi + \sqrt{2 E} x \theta )/h} f_{+} ( \theta ) T \big( e^{ i ( \sqrt{2 E} y \omega - \omega \eta )/h} f_{-} ( \omega )\big) \, d x \, d \theta \, d \omega .
\end{align*}
with $c_{2} = (2\pi h)^{- n} (2E)^{\frac{n-2}{2}}$. In particular, for $\alpha , \beta \in \N^{n-1}$,
\begin{align*}
\xi^{\alpha} \eta^{\beta} K ( \xi , \eta ) = c_{2} \iiint e^{ - i \theta \xi /h} (- i h \partial_{\theta} )^{\alpha} &\big( e^{-i \sqrt{2 E} x \theta /h} f_{+} ( \theta ) \big)    \\
&T \big( e^{ -i  \omega \eta /h} (- i h \partial_{\omega} )^{\beta} \big( e^{ i \sqrt{2 E} y \omega /h} f_{-} ( \omega ) \big) \big) \, d x \, d \theta \, d \omega ,
\end{align*}
We remark that
\begin{gather*}
e^{ - i \theta \xi /h} (- i h \partial_{\theta} )^{\alpha} \big( e^{-i \sqrt{2 E} x \theta /h} f _{+}( \theta ) \big) \in L^{2}_{-n/2 -1 - \vert \alpha \vert} ( \R^{n}_{x}) ,  \\
e^{ -i  \omega \eta /h} (- i h \partial_{\omega} )^{\beta} \big( e^{ i \sqrt{2 E} y \omega /h} f_{-} ( \omega ) \big) \in L^{2}_{-n/2 -1 - \vert \beta \vert} ( \R^{n}_{y}) ,
\end{gather*}
uniformly with respect to $h , \xi , \eta , \theta , \omega$. Combining $\Vert T \Vert_{\mathcal{B} (L^{2}_{-n/2 -1 - \vert \beta \vert} , L^{2}_{n/2 + 1 + \vert \alpha \vert} )} = \mathcal{O} (h^{\infty})$ with these estimates and the compacity of $\S^{n-1}$, we get
\begin{equation*}
\xi^{\alpha} \eta^{\beta} K ( \xi , \eta ) = \CO (h^{\infty}),
\end{equation*}
uniformly in $\xi , \eta$ and the lemma follows.
\end{proof}

\begin{remark}\sl
It is clear that all estimates in the above proof can be made uniform in the energy if that is allowed to vary in a bounded set.
\end{remark}

\appendix
\section{Elements of semi-classical analysis}\label{scanal}

\Subsection{Semi-classical distributions}

Here we recall some of the elements of semi-classical analysis which we use along the paper.
A family $(u_{h})_{h\in]0,h_{0}]}$ of distributions in $\CD'(\R^n)$  is called a semi-classical distribution
when 
\begin{equation*}
\forall \chi \in C_{0}^{\infty} ({\R}^{n}), \quad  \exists N \in \N, \quad \mathcal{F}_{h}(\chi u) (\xi)
\lesssim h^{-N}\langle \xi \rangle^{N},
\end{equation*}
where $\mathcal{F}_{h}$ is the $h$-Fourier transform
\begin{equation*}
\mathcal{F}_{h}(\chi u)(\xi)= \int_{\R^{n}} e^{- i x \cdot \xi /h} \chi u (x) \, d x .
\end{equation*}
The space of semi-classical distributions is denoted $\mathcal{D}_{h}'(\mathbb{R}^{n})$. We define the semi-classical wavefront set of $u=(u_{h})\in\mathcal{D}_{h}'(\mathbb{R}^{n})$ as follows.

\begin{Def}\label{defM}\sl 
Let $u\in\mathcal{D}'_{h}({\R}^{n})$ and let $\left(x_{0}, \xi_{0}\right)\in T^{*}\mathbb{R}^{n} \sqcup T^{*} \mathbb{S}^{n-1}$. We shall say that $\left(x_{0}, \xi_{0}\right)$ does not belong to the
semi-classical wavefront set of $u$ if:
\begin{itemize}
\item If $\left(x_{0}, \xi_{0}\right)\in T^{*}\mathbb{R}^{n}$: 
there exist $\chi\in
C_{0}^{\infty}\left(\mathbb{R}^{n}\right)$ with $\chi\left(x_{0}\right)\ne
0$ and an open neighborhood
$U$ of $\xi_{0}$, such that $\forall N\in\mathbb{N},$ $\forall\xi\in U$,
\begin{equation*}
\vert \mathcal{F}_{h}\left(\chi u\right)\left(\xi\right) \vert \leq C_{N,
U}h^{N}.
\end{equation*}
We shall denote the complement of the set of all such points by 
$WF_{h}^{f}(u).$

\medskip

\item If $\left(x_{0}, \xi_{0}\right)\in T^{*}\mathbb{S}^{n-1}$: there 
exist $\chi\in
C_{0}^{\infty}\left(\mathbb{R}^{n}\right)$ with $\chi\left(x_{0}\right)\ne
0$ and a
conic neighborhood $U$ of $\xi_{0}$, such that $\forall N\in\mathbb{N},$
$\forall\xi\in
U\cap\left\{\vert \xi \vert \geq \frac{1}{K}\right\}$ for some $K>0,$
\[|\mathcal{F}_{h}\left(\chi u\right)\left(\xi\right)|\leq
C_{N, U, K}h^{N}\left\langle\xi\right\rangle^{-N}.\]
We shall denote the complement of the set of all such points by 
$WF_{h}^{i}(u).$
\end{itemize}
We shall further use $WF_{h}\left(u\right)=WF_{h}^{f}(u)\sqcup 
WF_{h}^{i}(u)$ to denote the
semi-classical
wavefront set of $u.$
\end{Def}

A family $(u_{h})_{h\in]0,h_{0}]}$ of temperate distributions in $\CS '(\R^n)$ is called a semi-classical temperate distribution when, for some $N \in \R$,
\begin{equation*}
\< x, h D \>^{-N} u = \CO (h^{-N} ) ,
\end{equation*}
in $L^{2} (\R^{n})$. The space of semi-classical temperate distributions is denoted $\CS_{h} '( \R^{n})$.

\Subsection{Pseudodifferential operators}

We now define briefly the semi-classical pseudodifferential operators (see the book of Dimassi and Sj\"{o}strand \cite{DiSj99_01}).
A positive function $m : \R^{p} \to ]0, + \infty [$ is called an {\it order function} if there exists $C>0$ such that
\begin{equation*}
m (X) \leq C \< X-Y \>^{C} m (Y),
\end{equation*}
for all $X,Y \in \R^{p}$. We denote by $S_{p}^{q} (m)$ the set of (families of) functions $a (X;h) \in C^\infty(\R^{p})$ such that, for all $\alpha \in \N^{p}$,
\begin{equation*}
\partial^\alpha_{X} a (X ;h)=\CO(h^{-q} m(X)).
\end{equation*}
If $a (x,\xi ;h)$ is a symbol of class $S_{2n}^{q} (m)$, we define the $h$-pseudodifferential operator, in Weyl quantization, $\Op (a)$ with symbol $a$ by
\begin{equation}
\forall u \in \CS(\R^{n}),\quad  \left( \Op (a) u \right) (x) = \frac{1}{(2 \pi h)^{n}} \iint e^{i(x-y)\cdot \xi /h}a \Big( \frac{x+y} 2, \xi ; h \Big) u(y) \, d y \, d \xi,
\label{oph}
\end{equation}
extending the definition to $\mathcal{S}'\left(\mathbb{R}^{n}\right)$
by duality.
We also denote by $\Psi^{q} (m)$ the space of operators $\Op (S^{q}_{2n} (m))$.

We extend these notions to compact manifolds through the following definition of semi-classical pseudodifferential operators on compact manifolds.
Let $M$ be a smooth compact manifold and $\kappa_{j}: M_j\to X_j,$ $j=1, \ldots, N,$ be a set of
local charts.
A linear continuous operator $A: C^{\infty}(M) \to \mathcal{D}_{h}'(M)$ belongs to $\Psi^{q} (1, M)$ if for all $j \in \{ 1, \dots, N\}$ and $u \in C^{\infty}_{0}(M_{j})$ we have $Au \circ \kappa_{j}^{-1}=A_j \big( u \circ \kappa_{j}^{-1} \big)$ with $A_j \in \Psi^{q} (1)$, and $\chi_1 A \chi_2 : \mathcal{D}_{h}'(M) \to h^{\infty} C^{\infty} (M)$ if $\supp \chi_1 \cap \supp\chi_2 = \emptyset$ (see \cite[Section E.2]{EvZw07_01} for more details).

\Subsection{Microlocal Properties}

We can now define that we mean by ``microlocally''. We will only work on $\R^{n}$. Using the previous paragraph, this definition can be extended to the case of compact manifolds.

Let $u , v \in \CS_{h}'( \R^{n} )$. We say that $u=v$ {\it microlocally} near a set $U\subset T^{*} \mathbb{R}^{n}$, if there exist $a \in S^{0} (1)$, $a =1$ in a neighborhood of $U$, such that
\begin{equation*}
\Op (a) ( u - v) = \CO (h^{\infty} ),
\end{equation*}
in $L^{2} ( \R^{n} )$. We also say that $u \in \CS_{h}'( \R^{n} )$ satisfies a property $\mathcal{P}$  {\it microlocally} near a set $U\subset T^{*} \R^{n}$ if there exist $v \in \CS_{h}' ( \R^{n} )$ such that $u=v$
microlocally near $U$ and $v$ satisfies property $\mathcal{P}$.

\begin{Def}\sl
Let $A , B : L^{2}( \R^{n}) \rightarrow L^{2} ( \R^{m})$ be linear operators bounded by $\CO (h^{- N})$, $N>0$ and $( \rho , \widetilde{\rho} ) \in T^{*} \R^{m} \times T^{*} \R^{n}$. We say that
\begin{equation*}
A =B \text{ microlocally near } ( \rho , \widetilde{\rho} ),
\end{equation*}
if there exists $\alpha \in C^{\infty}_{0} (T^{*} \R^{m})$ (resp. $\beta \in C^{\infty}_{0} (T^{*} \R^{n})$) equal to $1$ near $\rho$ (resp. $\widetilde{\rho}$) such that
\begin{equation*}
\Op ( \alpha ) ( B -A) \Op ( \beta ) = \CO (h^{\infty}),
\end{equation*}
in ${\mathcal B} (L^{2}( \R^{n}) , L^{2} ( \R^{m}))$.
\end{Def}

\Subsection{Semi-classical Fourier integral operators}

We now define global semi-classical Fourier integral operators. For the general theory of the FIOs in the classical setting, we refer to H\"{o}rmander \cite[Section 25.2]{Ho94_01}. The theory of the semi-classical FIOs can be found in the books of Ivrii \cite[Section 1.2]{Iv98_01}, Robert \cite{Ro87_01}, in the PhD thesis of Dozias \cite{Do94_01} or in the article of the first author \cite{Al05_02}. We will develop with theory in $\R^{n}$. Using local charts, the following definitions and theorem can easily be extended to the case of compact manifolds.

Let $\varphi (x, y, \theta ) \in C^{\infty} ( \Omega )$ where $\Omega$ is an open set of $\R^{m+n+d}$. We say that $\varphi$ is a {\it non-degenerate phase function} if $d \varphi \neq 0$ everywhere in $\Omega$ and, for all $(x, y , \theta ) \in C_{\varphi}$ with
\begin{equation*}
C_{\varphi} = \{ (x, y , \theta) \in \Omega ; \ \partial_{\theta} \varphi = 0 \} ,
\end{equation*}
the $d$ differentials $d \partial_{\theta_{1}} \varphi , \ldots , d \partial_{\theta_{d}} \varphi$ are linearly independant.

If $\varphi$ is a non-degenerate phase function, $C_{\varphi}$ is a $(m+n)$-dimensional manifold and
\begin{equation*}
\begin{aligned}
j_{\varphi} : \\
{}
\end{aligned}
\left\{ \begin{aligned}
&C_{\varphi} \longrightarrow && T^{*} \R^{m} \times T^{*} \R^{n} \\
&(x, y , \theta) && (x, \partial_{x} \varphi , y , \partial_{y} \varphi)
\end{aligned} \right. 
\end{equation*}
is locally a diffeomorphism whose image is a Lagrangian manifold for the symplectic form $d \xi \wedge d x + d \eta \wedge d y$ ($(x, \xi)$ and $(y , \eta )$ are the standard coordinates on $T^{*} \R^{m}$ and $T^{*} \R^{n}$). We note $\Lambda_{\varphi} = j_{\varphi} (C_{\varphi})$.

\begin{Def}\sl
A submanifold $\Lambda \subset T^{*} \R^{m} \times T^{*} \R^{n}$ is a canonical relation if $\Lambda$ is a Lagrangian manifold for the symplectic form $d \xi \wedge d x - d \eta \wedge d y$.

A canonical relation $\Lambda$ is given by a canonical transformation if there exists a symplectic diffeomorphism $\kappa : T^{*} \R^{n} \to T^{*} \R^{m}$ such that $\Lambda = \graph ( \kappa )$.
\end{Def}

As usual, if $\Lambda \subset T^{*} \R^{m} \times T^{*} \R^{n}$, we note
\begin{equation*}
\Lambda ' = \{ (x,\xi , y , - \eta ) ; \ (x,\xi , y , \eta ) \in \Lambda \} ,
\end{equation*}
the subset of $T^{*} \R^{m} \times T^{*} \R^{n}$. In particular, for a non-degenerate phase function $\varphi$, the manifold $\Lambda_{\varphi} '$ is a canonical relation (if $\varphi$ is restricted to a small set).

\begin{Def}\sl \label{a44}
Let $r \in \R$, $\Lambda$ be a canonical relation from $T^{*} \R^{n}$ to $T^{*} \R^{m}$ and $A : L^{2}( \R^{n}) \rightarrow L^{2} ( \R^{m})$ be a linear operator bounded by $\CO (h^{- N})$, $N>0$. Then, $A$ is called a $h$-Fourier integral operator ($h$-FIO's) of order $r$ associated to $\Lambda$ and we note
\begin{equation*}
A \in \mathcal{I}_{h}^{r} ( \R^{m} \times \R^{n}, \Lambda ') ,
\end{equation*}
if, for all $(\rho , \widetilde{\rho} ) \in T^{*} \R^{m} \times T^{*} \R^{n}$, $A$ is equal to
\begin{equation} \label{a45}
h^{-r - \frac{n+m}{4} - \frac{d}{2}} \int_{\theta \in \R^{d}} e^{i \varphi (x, y , \theta ) /h} a (x, y , \theta ;h) \, d \theta.
\end{equation}
microlocally near $( \rho , \widetilde{\rho} )$. Here, the symbol $a \in S^0(1)$ has compact support in the variables $x, y , \theta$ (uniformly with respect to $h$). The function $\varphi$ is a non-degenerate phase function defined near the support of $a$ with $\Lambda_{\varphi} {}' \subset \Lambda$.

A $h$-FIO $A$ will be called a $h$-Fourier integral operator with compactly supported symbol if, modulo an operator $\CO (h^{\infty})$ in ${\mathcal B} (L^{2}( \R^{n}) , L^{2} ( \R^{m}))$, $A$ is a finite sum of operators of the form \eqref{a45}.
\end{Def}

Lastly, we give the composition law for $h$-Fourier integral operators (see {\it e.g.} \cite{Do94_01} for the proof). The following theorem is a semi-classical version of Theorem 25.2.3 of H\"{o}rmander \cite{Ho94_01}. Since all the $h$-FIOs which appear in this paper (except the one in Lemma \ref{lem1}) have compactly supported symbol, we give the composition law only in that case.

Let $A_{1}\in \mathcal{I}_{h} (\R^m \times \R^n,\Lambda_{1} {}')$ and $A_{2}\in \mathcal{I}_{h}^{r_{2}} (\R^n \times \R^p , \Lambda_{2} {}')$ be two $h$-FIO's with compactly supported symbols, associated with $\Lambda_{1}\subset T^*\R^n\times T^*\R^m$ and $\Lambda_{2}
 \subset T^*\R^m\times T^*\R^p$ respectively.  We set
\begin{align*}
& X=T^*\R^n\times T^*\R^m \times T^*\R^m\times T^*\R^p   \\
& Y=\Lambda_{1}\times \Lambda_{2}\subset  X      \\
& Z=T^*\R^n\times \diag(T^*\R^m \times T^*\R^m) \times T^*\R^p\subset X.
\end{align*}

\begin{Def}\sl
We say that $Y$ and $Z$ intersect cleanly if $Y \cap Z$ is a manifold and $T_{\rho}(Y\cap Z)=T_{\rho}Y\cap T_{\rho}Z$ at each $\rho\in Y\cap Z$. The excess of the intersection is 
\begin{equation*}
e= \dim X+\dim Y\cap Z-\dim Y -\dim Z.
\end{equation*}
\end{Def}

Let
\begin{equation} \label{a46}
\pi : Y \cap Z \to T^{*} \R^{m} \times T^{*} \R^{p} ,
\end{equation}
be the natural projection. The image of $\pi$ is
\begin{equation*}
\Lambda_{2} \circ \Lambda_{1} = \{ (\rho_{3} , \rho_{1} ) \in T^{*} \R^{m} \times T^{*} \R^{p} ; \ \exists \rho_{2} \in T^{*} \R^{n} , \ (\rho_{3} , \rho_{2} ) \in \Lambda_{2} \text{ and } (\rho_{2} , \rho_{1} ) \in \Lambda_{1} \}.
\end{equation*}

\begin{Def}\sl
We say that $Y$ and $Z$ intersect connectedly if, for all $\gamma \in T^{*} \R^{m} \times T^{*} \R^{p}$, the set $\pi^{-1} ( \gamma )$ is connected.
\end{Def}

When $Y$ and $Z$ intersect cleanly and connectedly, the set $\Lambda_{2} \circ \Lambda_{1}$ is a Lagrangian submanifold of $T^{*} \R^{m} \times T^{*} \R^{p}$. In general, the intersection $Y \cap Z$ is also assumed to be proper. This means that $\pi$, defined in \eqref{a46}, is proper. But since $A_{1}$ and $A_{2}$ have compactly supported symbol, we don't have to make such hypothesis.

\begin{Th}\sl \label{composeFIO}
Let $A_{1}\in \mathcal{I}_{h} (\R^m \times \R^n,\Lambda_{1} {}')$ and $A_{2}\in \mathcal{I}_{h}^{r_{2}} (\R^n \times \R^p , \Lambda_{2} {}')$ be two $h$-FIOs with compactly supported symbols. If $Y$ and $Z$ intersect connectedly and cleanly with excess $e$, then
\begin{equation*}
A_{2}\circ A_{1}\in\CI^{r_{1}+r_{2}+e/2}_{h} (\R^m\times\R^p, \Lambda_{2} \circ \Lambda_{1} {}') ,
\end{equation*}
is a $h$-FIO with compactly supported symbol.
\end{Th}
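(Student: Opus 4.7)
The plan is to reduce the composition to a local oscillatory integral computation by exploiting the compact support of the symbols, and then carry out a clean-phase stationary phase reduction. By bilinearity and compact support, modulo $\CO(h^\infty)$ remainders it suffices to treat a single pair of local representatives
\begin{equation*}
A_j = h^{-r_j - (\dim_j)/4 - d_j/2} \int e^{i \varphi_j(\cdot,\theta_j)/h} a_j(\cdot,\theta_j;h)\, d\theta_j,
\end{equation*}
with non-degenerate phase functions $\varphi_j$ satisfying $\Lambda_{\varphi_j}{}' \subset \Lambda_j$ and amplitudes $a_j$ compactly supported. Writing the Schwartz kernel of the composition as an iterated integral in the shared intermediate variable $y$ yields an oscillatory integral with composite phase $\Phi = \varphi_2 + \varphi_1$ and composite phase variables $w = (y, \theta_1, \theta_2)$.

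The geometric core of the argument is the identification of the critical set $C_\Phi$ with the intersection $Y \cap Z$ via the maps $j_{\varphi_j}$: stationarity of $\Phi$ in each $\theta_j$ forces the image to lie on $\Lambda_j$, while stationarity in $y$ imposes the diagonal matching $\partial_y \varphi_2 = -\partial_y \varphi_1$ that defines $Z$. The clean intersection hypothesis with excess $e$ thus translates into $\Phi$ being a clean phase function of excess $e$, in the sense that $C_\Phi$ is a smooth submanifold of dimension $\dim(\Lambda_2 \circ \Lambda_1) + e$ and the Hessian $\partial_w^2 \Phi$ has constant corank along $C_\Phi$. The connectedness assumption, in turn, is precisely what ensures that the quotient identifying points of $C_\Phi$ mapping to the same point of $\Lambda_2 \circ \Lambda_1$ is itself a manifold, so that the reduced phase function can be defined globally rather than only on components.

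I would then invoke the clean-phase Morse lemma with parameters: locally near any point of $C_\Phi$ one can split $w = (w', w'')$ so that $\partial_{w''}^2 \Phi$ is non-degenerate, and stationary phase integration in $w''$ produces an asymptotic expansion whose leading term is an oscillatory integral in the remaining variables $w'$ with a new non-degenerate phase $\tilde\Phi$ satisfying $\Lambda_{\tilde\Phi}{}' \subset \Lambda_2 \circ \Lambda_1$ and a new compactly supported symbol $\tilde a \in S^0(1)$. A careful bookkeeping of the powers of $h$ accumulating from the prefactors of $A_1, A_2$ together with the gain from the stationary phase expansion yields exactly the normalization of an $h$-FIO of order $r_1 + r_2 + e/2$ with symbol of compact support, as prescribed by Definition \ref{a44}; a partition of unity together with finite additivity in the local pieces then globalizes the statement. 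The main technical obstacle is the clean stationary phase reduction itself, in particular the verification that the reduced phase $\tilde\Phi$ is genuinely non-degenerate and locally parametrizes the composed Lagrangian; this step is classical in the homogeneous setting \cite{Ho94_01}, and its semi-classical counterpart is worked out in detail in \cite{Do94_01} and \cite{Al05_02}, so my proof would quote these references for the precise clean stationary phase lemma rather than rederive it from scratch.
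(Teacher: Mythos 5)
Your sketch is correct in outline and coincides with the argument the paper relies on: the paper itself offers no proof of Theorem \ref{composeFIO}, referring instead to Dozias \cite{Do94_01} and to Theorem 25.2.3 of H\"ormander \cite{Ho94_01}, and what you describe (localization via the compactly supported symbols, identification of the critical set of the composite phase in the variables $(y,\theta_1,\theta_2)$ with $Y\cap Z$, clean stationary phase with excess $e$, and the $h$-power bookkeeping yielding the order $r_1+r_2+e/2$) is precisely the standard proof contained in those references. Since you, like the authors, ultimately quote the clean stationary phase lemma rather than rederive it, there is nothing substantive to compare beyond noting that your reading of the hypotheses --- cleanness of $Y\cap Z$ giving a clean composite phase, connectedness preventing self-intersections of $\Lambda_2\circ\Lambda_1$ --- matches the paper's own remark following the theorem.
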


As stated in \cite[Page 18]{Ho94_01}, the hypothesis ``$Y$ and $Z$ intersect connectedly'' is made to avoid self-intersections of $\Lambda_{2} \circ \Lambda_{1}$. In particular, this assumption can be replaced by ``$\Lambda_{2} \circ \Lambda_{1}$ is a manifold''. Note that all the compositions in this paper satisfies this last statement.

\providecommand{\bysame}{\leavevmode\hbox to3em{\hrulefill}\thinspace}
\providecommand{\MR}{\relax\ifhmode\unskip\space\fi MR }
\providecommand{\MRhref}[2]{%
  \href{http://www.ams.org/mathscinet-getitem?mr=#1}{#2}
}
\providecommand{\href}[2]{#2}


\begin{thebibliography}{10}

\bibitem{AbMa78_01}
R.~Abraham and J.~Marsden, \emph{Foundations of mechanics}, Benjamin/Cummings
  Publishing Co. Inc. Advanced Book Program, Reading, M., 1978, Second edition,
  revised and enlarged, With the assistance of T.~Ra\c tiu and R. Cushman.

\bibitem{Al05_02}
I.~Alexandrova, \emph{Semi-classical wavefront set and fourier integral
  operators}, preprint math/0407460 on arxiv.org. (2005).

\bibitem{Al05_01}
I.~Alexandrova, \emph{Structure of the semi-classical amplitude for general scattering
  relations}, Comm. Partial Differential Equations \textbf{30} (2005),
  no.~10-12, 1505--1535.

\bibitem{Al06_02}
I.~Alexandrova, \emph{Semi-classical behavior of the spectral function}, Proc. Amer.
  Math. Soc. \textbf{134} (2006), no.~8, 2295--2302 (electronic).

\bibitem{Al06_01}
I.~Alexandrova, \emph{Structure of the short range amplitude for general scattering
  relations}, Asymptotic Analysis (2006), no.~50, 13--30.

\bibitem{AlBoRa07_01}
I.~Alexandrova, J.-F. Bony, and T.~Ramond, \emph{Semiclassical scattering
  amplitude at the maximum point of the potential}, preprint 0704.1632 on
  arxiv.org. (2007).

\bibitem{BoFuRaZe07_01}
J.-F. Bony, S.~Fujii\'e, T.~Ramond, and M.~Zerzeri, \emph{Microlocal kernel of
  pseudodifferential operators at a hyperbolic fixed point}, J. Funct. Anal.
  \textbf{252} (2007), no.~1, 68--125.

\bibitem{DiSj99_01}
M.~Dimassi and J.~Sj{\"o}strand, \emph{Spectral asymptotics in the
  semi-classical limit}, London Mathematical Society Lecture Note Series, vol.
  268, Cambridge University Press, Cambridge, 1999.

\bibitem{Do94_01}
S.~Dozias, \emph{Op\'{e}rateurs h-pseudodiff\'{e}rentiels \`{a} flot
  p\'{e}riodique}, Ph. D. Thesis., Universit\'{e} Paris Nord, 1994.

\bibitem{EvZw07_01}
L.~Evans and M.~Zworski, \emph{Lectures on semiclassical analysis}, version
  0.3, preprint avaible at {\tt http://math.berkeley.edu/$\sim$zworski/}, 2007.

\bibitem{GeMa89_01}
C.~G{\'e}rard and A.~Martinez, \emph{Semiclassical asymptotics for the spectral
  function of long-range {S}chr\"odinger operators}, J. Funct. Anal.
  \textbf{84} (1989), no.~1, 226--254.

\bibitem{Gu76_01}
V.~Guillemin, \emph{Sojourn times and asymptotic properties of the scattering
  matrix}, Proceedings of the Oji Seminar on Algebraic Analysis and the RIMS
  Symposium on Algebraic Analysis (Kyoto Univ., Kyoto, 1976), vol.~12, 1976/77
  supplement, pp.~69--88.

\bibitem{HaWu07_01}
A.~Hassell and J.~Wunsch, \emph{The semiclassical resolvent and the propagator
  for nontrapping scattering metrics}, preprint math/0606606 on arxiv.org.
  (2006).

\bibitem{HeSj85_01}
B.~Helffer and J.~Sj{\"o}strand, \emph{Multiple wells in the semiclassical
  limit. {III}. {I}nteraction through nonresonant wells}, Math. Nachr.
  \textbf{124} (1985), 263--313.

\bibitem{Ho94_01}
L.~H{\"o}rmander, \emph{The analysis of linear partial differential operators.
  {IV}}, Grundlehren der Mathematischen Wissenschaften, vol. 275,
  Springer-Verlag, Berlin, 1994, Fourier integral operators, Corrected reprint
  of the 1985 original.

\bibitem{IsKi85_01}
H.~Isozaki and H.~Kitada, \emph{Modified wave operators with time-independent
  modifiers}, J. Fac. Sci. Univ. Tokyo Sect. IA Math. \textbf{32} (1985),
  no.~1, 77--104.

\bibitem{IsKi86_01}
H.~Isozaki and H.~Kitada, \emph{Scattering matrices for two-body {S}chr\"odinger operators},
  Sci. Papers College Arts Sci. Univ. Tokyo \textbf{35} (1986), no.~2, 81--107.

\bibitem{Iv98_01}
V.~Ivrii, \emph{Microlocal analysis and precise spectral asymptotics},
  Springer Monographs in Mathematics, Springer-Verlag, Berlin, 1998.

\bibitem{Mi04_01}
L.~Michel, \emph{Semi-classical behavior of the scattering amplitude for
  trapping perturbations at fixed energy}, Canad. J. Math. \textbf{56} (2004),
  no.~4, 794--824.

\bibitem{Po85_01}
G.~Popov, \emph{Spectral asymptotics for elliptic second order differential
  operators}, J. Math. Kyoto Univ. \textbf{25} (1985), no.~4, 659--681.

\bibitem{PoSh83_01}
G.~Popov and M.~Shubin, \emph{Asymptotic expansion of the spectral function for
  second-order elliptic operators in {${\bf R}\sp{n}$}}, Funktsional. Anal. i
  Prilozhen. \textbf{17} (1983), no.~3, 37--45.

\bibitem{Pr82_01}
Y.~Protas, \emph{Quasiclassical asymptotic behavior of the scattering amplitude
  of a plane wave on the inhomogeneities of a medium}, Mat. Sb. (N.S.)
  \textbf{117(159)} (1982), no.~4, 494--515, 560.

\bibitem{Ra96_01}
T.~Ramond, \emph{Semiclassical study of quantum scattering on the line}, Comm.
  Math. Phys. \textbf{177} (1996), no.~1, 221--254.

\bibitem{ReSi79_01}
M.~Reed and B.~Simon, \emph{Methods of modern mathematical physics. {III}},
  Academic Press, New York, 1979, Scattering theory.

\bibitem{Ro87_01}
D.~Robert, \emph{Autour de l'approximation semi-classique}, Progress in
  Mathematics, vol.~68, Birkh\"auser Boston Inc., Boston, MA, 1987.

\bibitem{RoTa88_01}
D.~Robert and H.~Tamura, \emph{Semi-classical asymptotics for local spectral
  densities and time delay problems in scattering processes}, J. Funct. Anal.
  \textbf{80} (1988), no.~1, 124--147.

\bibitem{RoTa89_01}
D.~Robert and H.~Tamura, \emph{Asymptotic behavior of scattering amplitudes in semi-classical
  and low energy limits}, Ann. Inst. Fourier (Grenoble) \textbf{39} (1989),
  no.~1, 155--192.

\bibitem{Va84_01}
B.~R. Va{\u\i}nberg, \emph{Complete asymptotic expansion of the spectral
  function of second-order elliptic operators in {${\bf R}\sp{n}$}}, Mat. Sb.
  (N.S.) \textbf{123(165)} (1984), no.~2, 195--211.

\bibitem{Va89_01}
B.~R. Va{\u\i}nberg, \emph{Asymptotic methods in equations of mathematical physics}, Gordon
  \& Breach Science Publishers, New York, 1989, Translated from the Russian by
  E. Primrose.

\bibitem{Va07_01}
A.~Vasy, \emph{The wave equation on asymptotically de sitter-like spaces},
  preprint 0706.3669 on arxiv.org. (2007).

\bibitem{Ya87_01}
K.~Yajima, \emph{The quasiclassical limit of scattering amplitude. {$L\sp
  2$}-approach for short range potentials}, Japan. J. Math. (N.S.) \textbf{13}
  (1987), no.~1, 77--126.

\end{thebibliography}
\end{document}